\documentclass{amsart}
\usepackage{amsmath, amsfonts, amssymb,amsbsy,eucal,mathrsfs}
\usepackage[all]{xy}
\usepackage{pstricks}
\usepackage{pst-plot}
\usepackage{graphicx}
\usepackage[utf8]{inputenc}
\usepackage[T1]{fontenc}
\usepackage{chngcntr}

\setcounter{secnumdepth}{3}

\newtheorem{thm}{Theorem}[subsection]
\counterwithin*{thm}{section}

\newtheorem{lemma}[thm]{Lemma}
\newtheorem{prop}[thm]{Proposition}
\newtheorem{cor}[thm]{Corollary}

\theoremstyle{definition}
\newtheorem{remark}[thm]{Remark}

\newtheorem{example}[thm]{Example}
\newtheorem{defn}[thm]{Definition}

\renewenvironment{proof}{{\flushleft \it Proof.}}{\hfill $\square$ \vspace{2mm}}

\DeclareMathOperator{\SL}{SL}
\DeclareMathOperator{\Sp}{Sp}
\DeclareMathOperator{\SO}{SO}
\DeclareMathOperator{\Ker}{Ker}

\newcommand{\Q}{{\mathbb Q}}

\newcommand{\cO}{{\mathcal O}}

\newcommand{\ignore}[1]{}



\def \a {{\alpha}}

\def \A {{\mathbb{A}}}

\def \im {{\rm Im}}

\newcommand{\G}{\mathbb{G}}



\renewcommand{\a}{{\alpha}}

\newcommand{\rk}{{\rm rk}}



\newcommand{\scal}[1]{\langle #1 \rangle}

\newcommand{\g}{\mathfrak g}

\renewcommand{\b}{\mathfrak b}

\newcommand{\ad}{{\rm ad}\ \!}

\newcommand{\haut}{{\rm ht}}





\def \cN {{\mathcal{N}}}

\newcommand{\kk}{{{\mathsf{k}}}}

\newenvironment{proof-of-prop-}{{\flushleft \it Proof of Proposition \ref{prop-irr=>rc}.}}{\hfill $\square$ \vspace{2mm}}

\begin{document}

\title{{Singularities of closures of spherical $B$-conjugacy classes of nilpotent orbits}}

\date{April 8, 2016}

\author{M.~Bender}
\address{Bergische Universit\"at Wuppertal, D-42097 Wuppertal, Germany}
\email{mbender@uni-wuppertal.de}

\author{N.~Perrin}
\address{Laboratoire de Math\'ematiques de Versailles, UVSQ, CNRS, Universit\'e Paris-Saclay, 78035 Versailles, France}
\email{nicolas.perrin@uvsq.fr}

\subjclass[2000]{Primary: 14M27,14M17,14M15. Secondary: 20G15, 14M12.}

\thanks{The first author thanks Magdalena Boos for helpful conversations about Borel conjugation of nilpotent elements. The second author was supported by a public grant as part of
  the Investissement d'avenir project, reference ANR-11-LABX-0056-LMH,
  LabEx LMH.}

\begin{abstract}
We prove that for a simply laced group, the closure of the Borel conjugacy class of any nilpotent
element of height $2$ in its conjugacy class is normal and admits a rational resolution. 
We extend this, using Frobenius splitting techniques, to the closure in the whole Lie algebra if either the group has type $A$ or the element has rank $2$.
\end{abstract}

\maketitle

\markboth{M.~BENDER \& N.~PERRIN}
{\uppercase{{Singularities of spherical nilpotent orbits}}}

\setcounter{tocdepth}{2}


\section*{Introduction}

We work over an algebraically closed field $\kk$. Let
$G$ be a reductive group and $\g$ be its Lie algebra. An element $x
\in \g$ is called nilpotent if ${\rm ad}_x$ is a nilpotent operator on
$\g$. This property is preserved under the adjoint action of the group
$G$ on $\g$. Nilpotent orbits are the orbits $G \cdot x$ under this
action when $x$ is nilpotent.  

These orbits have been classified via the existence of so called
$\mathfrak{sl}_2$-triples. More precisely, for $x$ nilpotent, there
always exists a $\mathfrak{sl}_2$-triple $(x,h,y)$ of elements in $\g$
such that the span of these elements is a subalgebra of $\g$
isomorphic to $\mathfrak{sl}_2$ and such that these elements satisfy
the Serre relations. In particular $h$ is semisimple. The eigenspace
decomposition of $h$ on $\g$ is of the form  $\g = \g(-n) \oplus \cdots \oplus \g(0) \oplus \cdots \oplus \g(n).$

\begin{defn}
The height $\haut(x)$ of $x$ is the maximal weight $n$ in the above
decomposition. It does not depend on the choice of a
$\mathfrak{sl}_2$-triple.    
\end{defn}

Since $x$ lies in $\g(2)$ we always have $\haut(x) \geq 2$. Nilpotent 
orbits which are spherical were first classified by
D. Panyushev \cite{Panyushev} in characteristic $0$. The results were
extented in positive characteristic by Fowler and R\"ohrle \cite{FR}. The 
classification is especially simple in terms of the height of $x$ if the characteristic is a good prime. 
Recall that a prime $p$ is good for $G$ if $p$ does not divide
any coefficient in the expression of roots in terms of simple roots
(see \cite[Definition 4.1]{SpringerSteinberg}). 

\begin{thm}[\cite{Panyushev},\cite{FR}]
Assume that the characteristic of $\kk$ is good for $G$. Then $G\cdot
x$ is spherical if and only if $\haut(x) \leq 3$. 
\end{thm}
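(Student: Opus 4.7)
The plan is to use the $\mathfrak{sl}_2$-triple attached to $x$ to set up a grading of $\g$, reduce the sphericity question to a prehomogeneity statement for the associated Levi acting on the graded pieces of the nilradical, and then verify that statement through the classification of nilpotent orbits.

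First I would fix an $\mathfrak{sl}_2$-triple $(x,h,y)$ and the grading $\g=\bigoplus_i \g(i)$. Let $P\subset G$ be the associated parabolic with $\Lie P=\bigoplus_{i\ge 0}\g(i)$ and Levi $L$ satisfying $\Lie L=\g(0)$, and set $\n_{\ge 2}=\bigoplus_{i\ge 2}\g(i)$. This is a $P$-stable subspace containing $x$, and the natural map $G\times^P\n_{\ge 2}\to \g$ has $\overline{G\cdot x}$ as an irreducible component of its image. Since $B$ already has an open orbit on $G/P$, sphericity of $G\cdot x$ is equivalent to a Borel $B_L\subset L$ having an open orbit on $L\cdot x$ inside $\g(2)\oplus\g(3)\oplus\cdots$; the unipotent radical of $P$ acts in a controlled way on the pieces that matter.

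For the forward implication I would separate the cases $\haut(x)=2$ and $\haut(x)=3$. When $\haut(x)=2$ the element $x$ is Richardson in $\n=\g(2)$ and sphericity reduces to $(L,\g(2))$ being a prehomogeneous vector space with an open $B_L$-orbit; this falls under classical Vinberg $\theta$-group theory and can be checked type by type using the list of graded simple Lie algebras. When $\haut(x)=3$ the analogous analysis incorporates $\g(3)$ as well. For the converse, starting from any height-$\ge 4$ element, the dimension formula $\dim G\cdot x=2\sum_{i\ge 1}\dim\g(i)-\dim\g(1)$ combined with a bound on $\dim B_L\cdot x$ produces the strict inequality that obstructs sphericity; to make this effective one inspects each height-$4$ orbit in the classification and exhibits either an invariant rational function or an infinite family of $B$-stable subvarieties.

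The main obstacle is not any single conceptual step but the case analysis itself: prehomogeneity of $(L,\g(2))$ and of its height-$3$ refinement, and its failure in height $4$, genuinely have to be checked through the classification of nilpotent orbits, i.e.\ partitions in classical types and Bala--Carter labels in the exceptional types. The additional work required to extend Panyushev's characteristic-zero result to good characteristic, which is the content of Fowler--R\"ohrle, amounts to verifying that $\mathfrak{sl}_2$-triples exist and are well behaved, and that the classification of nilpotent orbits together with the computation of their heights is preserved. Both ingredients rely crucially on the good-prime hypothesis through the Springer--Steinberg theory of nilpotent elements.
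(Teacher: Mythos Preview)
The paper does not prove this theorem: it is quoted from the literature, with the attribution \cite{Panyushev},\cite{FR} given in the heading, and no argument is supplied anywhere in the text. So there is no ``paper's own proof'' to compare your proposal against.

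That said, your outline is broadly consonant with how Panyushev and Fowler--R\"ohrle actually argue. The reduction via the $\mathfrak{sl}_2$-grading and the associated parabolic is exactly the set-up they use, and the ``if'' direction does ultimately rest on a type-by-type verification that the relevant $L$-modules $\g(2)$ (and $\g(2)\oplus\g(3)$ in height $3$) are spherical. One point where your sketch is vaguer than the originals: for the ``only if'' direction Panyushev does not proceed by inspecting each height-$4$ orbit and exhibiting an invariant. He gives a uniform argument based on the $\mathfrak{sl}_2$-module structure of $\g$: if $\haut(x)\ge 4$ then some irreducible $\mathfrak{sl}_2$-summand has dimension $\ge 5$, and a dimension comparison between $\dim B$ and $\dim G\cdot x$ (using $\dim\g(i)=\dim\g(-i)$ and the structure of the summands) already forces $\dim B < \dim G\cdot x$, so no open $B$-orbit can exist. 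Your proposed route through ``inspecting each height-$4$ orbit in the classification'' would work in principle but is more laborious than what is needed. The Fowler--R\"ohrle contribution is, as you say, largely to certify that the $\mathfrak{sl}_2$-triple machinery and the orbit/height classification survive intact in good characteristic.
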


The geometry of $B$-orbit closures is very simple in
$G$-varieties which are spherical. In this paper we consider $B$-orbit
closures in spherical nilpotent orbits. We focus on the case of height
$2$ since the $G$-orbits share a simple geometric structure in that
case: they are obtained via parabolic induction from Jordan
algebras (see Section \ref{induction}). Note also that in type $A$, all spherical elements have height $2$. 
Using this structure we prove the following result (see Theorem \ref{thm-origin}). 

\begin{thm}
Assume that the characteristic of $\kk$ is large enough. 
Let $G$ be a reductive group having only simply laced simple factors, let
$B$ be a Borel subgroup of $G$ and $x \in \g$ be nilpotent of height
$2$. Then the orbit closure $\overline{B \cdot x}$ in $\g$ is normal and has a
rational resolution outside the $G$-orbits it contains. Furthermore $\overline{B \cdot x}$ is homeomorphic to its normalisation.
\end{thm}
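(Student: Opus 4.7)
The plan is to exploit the parabolic induction structure of height-$2$ nilpotent orbits developed in Section \ref{induction}: associated to $x$ there is a parabolic $P = LU \subset G$ with $x \in \gu = \lie(U)$ a regular element of the natural Jordan algebra structure on $\gu$ induced by the associated grading, and an identification $G \cdot x \simeq G \times^P (L \cdot x)$. Under this identification, $B$-orbits on $G \cdot x$ correspond to certain $(B \cap L)$-orbits on $\gu$, and $\overline{B \cdot x} \cap G \cdot x$ is a smooth fibration over $BP/P$ with fiber the closure of a single orbit in the Jordan algebra. Since the $G \times^P (-)$ construction is a locally trivial smooth bundle, normality, rationality of a resolution, and the homeomorphism-to-normalization property descend from their counterparts for one $(B\cap L)$-orbit closure in $\gu$.

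The next step is to construct an explicit resolution of $\overline{(B \cap L) \cdot x}^{\,\gu}$. In the simply laced case the relevant Jordan algebras are classical: rectangular matrices in type $A$, spaces of (skew-)symmetric matrices or quadratic-form-type Jordan algebras in types $D$ and $E$. The $(B \cap L)$-orbit closures then have determinantal flavour, and I would resolve them by an incidence variety $\wt Y$ over a product of partial flag varieties of $L$: pairs consisting of a matrix-like element together with a compatible flag of kernels or images, so that $\wt Y$ is realised as the total space of a vector bundle over a flag variety and the projection $\pi \colon \wt Y \to \overline{(B \cap L) \cdot x}^{\,\gu}$ is proper and birational.

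To conclude, I would verify the vanishing $R^i \pi_* \cO_{\wt Y} = 0$ for $i > 0$. Since $\wt Y$ is a vector bundle over a flag variety, this reduces by the projection formula to Kodaira--Kempf type vanishing on flag varieties, available in characteristic zero or in sufficiently good positive characteristic. By Kempf's criterion this gives normality of $\overline{(B \cap L) \cdot x}^{\,\gu}$ and shows that $\pi$ is a rational resolution. Propagating via the $G \times^P (-)$ construction yields normality and a rational resolution for $\overline{B \cdot x} \cap G \cdot x$, which is precisely the complement of the smaller $G$-orbits contained in $\overline{B \cdot x}$. Finally, the homeomorphism of $\overline{B \cdot x} \subset \g$ with its normalisation follows because the $B$-action stratifies $\overline{B \cdot x}$ into finitely many smooth $B$-orbits, so the finite birational normalisation map is bijective on each stratum (a single $B$-orbit is smooth, hence equals its own normalisation) and therefore a homeomorphism globally.

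The main obstacle I anticipate is establishing the cohomology vanishing $R^i \pi_* \cO_{\wt Y} = 0$ uniformly across the simply laced types, since the combinatorics of $(B \cap L)$-orbits in the Jordan algebras depends sensitively on the type and on the particular height-$2$ element, and one must verify that the same bundle model over a flag variety is correct for every $B$-orbit. The simply laced hypothesis is essential here because it restricts the Jordan algebras to classical ones whose representation theory aligns well with the standard flag-variety resolutions; in non-simply laced types the Jordan algebra structure either fails or acquires twists that obstruct the naive incidence model, which is precisely why the paper must resort to Frobenius splitting techniques to extend its results beyond the simply laced setting.
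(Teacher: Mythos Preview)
Your proposal diverges from the paper's argument and contains genuine gaps.

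\textbf{The fibration claim is wrong.} You assert that $\overline{B\cdot x}\cap G\cdot x$ is a smooth fibration over $BP/P$ with fibre a $(B\cap L)$-orbit closure in $\gu$. But $B\subset P$, so $BP/P$ is a point; more importantly, under $G\cdot x\simeq G\times^P(L\cdot x)$ the $B$-orbits are indexed by pairs $(w,O)\in W^P\times\{(B\cap L)\text{-orbits}\}$ (Brion's Lemma~6), and the closure of such an orbit is \emph{not} in general a fibre bundle over a Schubert variety: lower strata mix the $W^P$ and the Jordan-algebra directions. So the reduction to a single $(B\cap L)$-orbit closure, on which your whole strategy rests, does not go through as stated.

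\textbf{The homeomorphism argument is invalid.} Smoothness of each $B$-orbit does not force the normalisation map $\nu\colon\widetilde Y\to Y$ to be bijective: a priori $\nu^{-1}(O)$ could be several copies of $O$. The paper obtains bijectivity from the existence of a resolution $f\colon X\to Y$ with \emph{connected fibres}; since $X$ is normal, $f$ factors through $\nu$, and connectedness of $f^{-1}(y)$ forces $\nu^{-1}(y)$ to be a single point.

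\textbf{You misidentify the role of the simply laced hypothesis.} The Jordan algebras in Table~1 are ``classical'' in the non-simply-laced cases too ($\kk^{2n-1}$ for $B_n$, symmetric matrices for $C_n$). What simply laced actually buys is that $L\cdot x\subset\g(2)$ is of \emph{minimal rank} in the sense of Ressayre: all weak-order covering relations are of type~$U$. This is the engine of the paper's proof. It yields a unique minimal $B$-orbit $Y_0$ whose closure is a linear subspace (Proposition~\ref{prop-min-ev}), and hence a Bott--Samelson-type resolution $P_1\times^B\cdots\times^B P_m\times^B Y_0\to\overline{B\cdot x}$ that is birational with connected fibres (Lemma~\ref{lemma-resol}, Corollary~\ref{cor-resol3}). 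Normality and rationality outside the $G$-orbits then come not from explicit cohomology vanishing but from Brion's theorem on multiplicity-free subvarieties \cite[Theorem~2]{brion2}, which applies precisely because minimal rank makes $\overline{B\cdot x}$ multiplicity-free. Your case-by-case incidence-variety plan might be salvageable in type~$A$ (matrix Schubert varieties), but for the $27$-dimensional exceptional Jordan algebra in $E_7$ there is no evident determinantal model, and in any case the paper's route avoids the case analysis entirely.
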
 

\begin{cor}
For $x$ and $G$ as above, the closure of $B \cdot x$ in $G \cdot x$ is normal and has a rational resolution.
\end{cor}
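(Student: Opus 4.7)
The plan is to deduce the corollary from the theorem by taking a suitable open restriction. Write $Y = \overline{B \cdot x}^{\,\g}$ for the closure in the Lie algebra and $Y^{\circ} = \overline{B \cdot x}^{\,G\cdot x}$ for the closure in the orbit. Since $B \cdot x \subset G \cdot x$, we have $Y \subset \overline{G \cdot x}$, and the $G$-orbit $G \cdot x$ is open in its closure, so $G \cdot x \cap Y$ is open in $Y$. Moreover any $G$-orbit $G \cdot y$ contained in $Y$ must satisfy $G \cdot y \subset \overline{G \cdot x}$, and if $G \cdot y \neq G \cdot x$ then $G \cdot y$ lies in the boundary $\overline{G \cdot x} \setminus G \cdot x$. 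I would first check that
\[
Y^{\circ} \;=\; Y \cap (G \cdot x) \;=\; Y \;\setminus\; \bigcup_{G\cdot y \subsetneq \overline{G\cdot x},\ G\cdot y \subset Y} G \cdot y,
\]
so $Y^{\circ}$ is exactly the open locus of $Y$ that is disjoint from the boundary $G$-orbits contained in $Y$.

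With this identification, the two properties transfer directly. For normality: by the theorem $Y$ is normal, and normality passes to open subvarieties, so $Y^{\circ}$ is normal. For the rational resolution: the theorem supplies a proper birational morphism $\pi \colon \widetilde{Y} \to Y$ from a smooth variety which is a rational resolution away from the boundary $G$-orbits contained in $Y$, i.e.\ $\pi^{-1}(U) \to U$ is a rational resolution for $U := Y^{\circ}$. Pulling back via the open immersion $U \hookrightarrow Y$, the induced morphism
\[
\pi|_{\pi^{-1}(U)} \colon \pi^{-1}(U) \longrightarrow U = Y^{\circ}
\]
is proper and birational with smooth source, and the vanishing $R^i \pi_* \cO_{\widetilde{Y}} = 0$ for $i>0$ together with $\pi_* \cO_{\widetilde{Y}} = \cO_Y$ restrict to the open locus since higher direct images and pushforwards commute with restriction to open subsets. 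Hence $\pi|_{\pi^{-1}(U)}$ is a rational resolution of $Y^{\circ} = \overline{B\cdot x}^{\,G\cdot x}$.

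Since both steps are completely formal once the theorem is in hand, I do not anticipate any real obstacle; the only point that deserves verification is the set-theoretic identification of $Y^{\circ}$ with the locus on which the theorem already guarantees a rational resolution, and this follows from the openness of $G \cdot x$ inside $\overline{G \cdot x}$.
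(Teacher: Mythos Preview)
Your approach—restricting to the open subset $Y\cap(G\cdot x)$—is the intended one, and the paper gives no explicit proof beyond stating the corollary as an immediate consequence. However, two points in your write-up need correction.

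First, the theorem does \emph{not} assert that $Y$ is normal globally; it asserts normality (and rational resolution) only on the complement in $Y$ of the $G$-orbits \emph{contained} in $Y$. The remark following Theorem~\ref{thm-origin} makes this explicit, and the separate claim that $Y$ is homeomorphic to its normalisation would be vacuous otherwise. So you cannot write ``by the theorem $Y$ is normal'' and then restrict; you must argue that $Y^\circ$ lies in the locus where the theorem already gives normality.

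Second, your displayed equality $Y^\circ = Y\setminus\bigcup_{G\cdot y\subset Y}G\cdot y$ is not correct as an equality: a boundary orbit $G\cdot y$ may meet $Y$ without being contained in it, so the right-hand side can be strictly larger than $Y^\circ$. What you actually need is only the inclusion
\[
Y^\circ \;\subseteq\; \{\,z\in Y : G\cdot z\not\subset Y\,\},
\]
and this holds because for $z\in Y^\circ$ one has $G\cdot z = G\cdot x$, which is not contained in $Y$ unless $B\cdot x$ is the dense $B$-orbit of $G\cdot x$. In that exceptional case $Y^\circ = G\cdot x$ is smooth and the conclusion is trivial. With these two adjustments your argument is complete.
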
 

Our assumption on the characteristic of $\kk$ comes from the fact that we need the nilpotent orbit $\overline{G \cdot x}$ to be normal. Since this is true in characteristic zero, this also holds true for large enough characteristics. We give more precise bounds in the text. 

We actually produce a resolution of singularities with connected fibers of $\overline{B \cdot x}$ giving the last assertion of the above theorem and we are able to prove our regularity result outside $G$-orbits in $\overline{B \cdot x}$. Using Frobenius splitting techniques applied to the Jordan algebra of matrices or to the rank 2 Jordan algebras, we are able to extend the above result to the closure in the whole Lie algebra (see Corollary \ref{main-coro}):

\begin{thm}
Assume that the characteristic of $\kk$ is not $2$. Let $G$ be a reductive group having only simply laced simple factors, let
$B$ be a Borel subgroup of $G$ and $x \in \g$ nilpotent of height
$2$ such that $x$ is of rank $2$ in any simple factor not of type $A$. Then the closure in $\g$ of the $B$-orbit $B \cdot x$ is normal and has a rational resolution. 
\end{thm}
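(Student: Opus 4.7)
The plan is to reduce the global problem on $\g$ to a problem on the Jordan algebra $J = \g(2)$ via the parabolic induction recalled in Section~\ref{induction}. Let $P \subset G$ be the parabolic subgroup associated to an $\mathfrak{sl}_2$-triple $(x,h,y)$; then $J$ is $P$-stable, $x \in J$, and the collapsing map $\mu : G \times^P J \to \g$ is a $G$-equivariant proper morphism with image $\overline{G \cdot x}$. Under $\mu$, the closure $\overline{B \cdot x}^{\,\g}$ is the image of $B \times^{B \cap P} Z$, where $Z = \overline{(B \cap P) \cdot x}^{\,J}$. The theorem therefore reduces to proving the corresponding normality and rational resolution statements for $Z \subset J$ and transporting them through the bundle construction, which is harmless because $G/P$ and the ambient fibration are smooth.

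The key new tool is a Frobenius splitting of $J$ which compatibly splits every $(B \cap P)$-orbit closure. In type $A$, $J$ is (up to a shift) a space of rectangular matrices and the required compatible splitting is the classical one used for matrix Schubert varieties and determinantal varieties. In the remaining simply laced types, the rank-$2$ hypothesis forces $J$ to be a small Jordan algebra of quadratic-form type (with at most one extra line), on which a splitting can be constructed directly from an explicit section of $\omega_J^{-(p-1)}$ vanishing with multiplicity one on the union of the $B$-stable prime divisors; here the hypothesis $\chr(\kk) \neq 2$ is what allows one to diagonalise the defining quadratic form and write the section down. Via the canonical splitting of $G/P$ and the $P$-equivariance of the construction, this splitting lifts to a compatible splitting of $G \times^P J$, which then descends along $\mu$ to a compatible Frobenius splitting of $\overline{G \cdot x} \subset \g$ splitting $\overline{B \cdot x}^{\,\g}$.

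Once $\overline{B \cdot x}^{\,\g}$ is compatibly Frobenius split, it is in particular reduced, and combined with the previous theorem (which already provides normality over the open $G$-orbit and hence regularity in codimension one of the closure in $\g$), one concludes normality by the standard arguments of Frobenius splitting theory. For the rational resolution, extend the resolution $\wt{Y} \to \overline{B\cdot x}^{\,G \cdot x}$ constructed in the previous theorem to a proper birational morphism $\pi' : \wt{Y}' \to \overline{B \cdot x}^{\,\g}$ by running the same $G \times^P$-bundle construction starting from a smooth $B\cap P$-equivariant resolution of $Z$; the compatible Frobenius splitting then yields $R^i \pi'_* \cO_{\wt{Y}'} = 0$ for $i > 0$ and $\pi'_* \cO_{\wt{Y}'} = \cO_{\overline{B \cdot x}^{\,\g}}$ by the standard Frobenius-splitting criterion for rational resolutions of Mehta-van der Kallen and Brion-Kumar. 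The principal obstacle is the explicit construction of the compatibly split structure on the rank-$2$ Jordan algebras outside type $A$: one must enumerate the $(B \cap P)$-orbit closures in each $J$, identify the $B$-stable prime divisors, and verify that the obvious candidate section vanishes to exactly the right order on their union, which is precisely where the assumption $\chr(\kk) \neq 2$ becomes essential.
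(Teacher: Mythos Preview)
Your overall architecture is right --- reduce to $\g(2)$ by parabolic induction, build a $B$-canonical Frobenius splitting there, lift it to $G\times^P\g(2)$, and push forward --- and this is exactly the route the paper takes. But your deduction of normality has a genuine gap. You write that Frobenius splitting gives reducedness, the previous theorem gives regularity in codimension one, and then ``standard arguments of Frobenius splitting theory'' finish the job. This is not correct: Frobenius split plus $R_1$ does \emph{not} imply $S_2$, so Serre's criterion is unavailable. (There is also no reason the $G$-orbits contained in $\overline{B\cdot x}$ should have codimension $\geq 2$; for the minimal $B$-orbit closure in a rank~$1$ case the origin is a codimension~$1$ $G$-orbit.) The paper's argument is different and uses an ingredient you never invoke: the resolution
\[
f:\ P_1\times^B\cdots\times^B P_m\times^B Y_0\ \longrightarrow\ \overline{B\cdot x}
\]
of Corollary~\ref{cor-resol3} has \emph{connected fibers}, a consequence of every weak-order covering being of type $U$ (minimal rank). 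Connected fibers force the normalisation map to be bijective, and a Frobenius split variety whose normalisation is bijective is already normal; this is exactly \cite[Proposition~1.2.5]{BK}. Your resolution $\pi'$, built from an unspecified ``smooth $B\cap P$-equivariant resolution of $Z$'', does not come with any fiber-connectedness, so you cannot run this step.

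Two smaller points. First, for the rank~$2$ splitting outside type $A$ the paper does something slicker than enumerating all $B_L$-orbit closures and writing down a section of $\omega_J^{1-p}$: it realises $\g(2)\simeq\kk^{2n}$ as the big cell of the quadric $G_E/P_E$, restricts the known $B_E$-canonical splitting from \cite{KLS}, and shows only that the \emph{minimal} $B_L$-orbit closure $Y_0$ (a linear subspace) arises from intersections and components of the compatibly split hypersurfaces. All other $B$-orbit closures are then compatibly split automatically by parabolic induction via \cite[Theorem~4.1.17]{BK}. Second, the hypothesis $\chr(\kk)\neq 2$ is not about diagonalising a quadratic form in your section; it enters because $p=2$ is a bad prime in types $D$ and $E$ (needed for Proposition~\ref{prop-fs}) and because normality of $\overline{G\cdot x}$, required to push the splitting down along $G\times^P\g(2)\to\overline{G\cdot x}$, is only known for $p\neq 2$ in those types.
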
 

The assumption on the characteristic comes from the fact that the nilpotent orbit $\overline{G \cdot x}$ is normal for $\kk$ of characteristic different from $2$ with the above assumption. Actually in type $A$ there is no assumption needed.

The proof goes as follows. By classical arguments for example
from \cite{FR}, it is easy to reduce to the case where $G$ is
simple. We then prove that for nilpotent orbits of height $2$, the
orbit can be obtained by parabolic induction from the
dense orbit of a Jordan algebra. For simply laced groups, this
dense orbit is of minimal rank. By a result of Ressayre \cite{ressayre}
it has a unique closed $B$-orbit and we prove that this $B$-orbit is
an affine space. Using classical techniques we construct this way
resolutions of singularities and prove our first result. Constructing Frobenius splittings in type $A$ and in rank $2$ we get the second result above. We also give
explicit descriptions of $B$-orbit closures for classical groups and
give an example where the $B$-orbit closures are non normal and 
non Cohen-Macaulay when $G$ is of type $C$ and obtain more 
normality results in type $B$.  


\section{$B$-orbits in spherical varieties}

In this section, we recall classical results on the structure of
$B$-orbits in spherical varieties (see for example \cite[Section
  4.4]{survey}). First recall the following result due to Brion
\cite{brion} and Vinberg \cite{vinberg}. 

\begin{prop}
A spherical variety has finitely many $B$-orbits.
\end{prop}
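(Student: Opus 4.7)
The plan is to combine noetherian induction with the local structure theorem for spherical varieties. By Sumihiro's theorem, $X$ admits a finite cover by $G$-stable quasi-projective open subsets, and within each such open a suitable $B$-semi-invariant section of an ample line bundle cuts out a $B$-stable affine open; this reduces the proof to the case where $X$ is irreducible, normal, and affine, with a dense $B$-orbit $\mathcal{O}_0$. The central claim to establish is then: \emph{every $B$-stable closed irreducible subvariety $Y \subseteq X$ contains a dense $B$-orbit in itself.} Granting this, finiteness of $B$-orbits on $X$ follows by noetherian induction: the complement $X \setminus \mathcal{O}_0$ is a proper closed $B$-stable subset whose finitely many irreducible components each satisfy the claim with strictly smaller dimension, so the stratification iterates in finitely many steps and produces a finite list of $B$-orbits.

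To prove the key claim, I would invoke the local structure theorem of Brion--Luna--Vust: there exist a parabolic $P \supseteq B$ with Levi decomposition $P = L \cdot U_P$ and a $B$-stable open affine subset $X^0 \subseteq X$ with $X^0 \cong U_P \times Z$, where $Z$ is affine, $L$-stable, and $L$ acts on $Z$ through a quotient torus $T'$. Sphericity of $X$ forces $Z$ to be a toric $T'$-variety, so by classical toric geometry every $T'$-stable closed subvariety of $Z$ has a dense $T'$-orbit. If the $B$-stable irreducible closed $Y \subseteq X$ meets $X^0$, then $U_P$-stability inside $U_P \times Z$ forces $Y \cap X^0 = U_P \times Z_Y$ for some closed $T'$-stable $Z_Y \subseteq Z$, and the dense $T'$-orbit in $Z_Y$ lifts to a dense $B$-orbit in $Y \cap X^0$, which is open and dense in $Y$.

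It remains to handle the case $Y \subseteq X \setminus X^0$. Here the plan is to cover $X$ by finitely many $B$-stable opens $X^0_\alpha$, each of the product form above, by applying the local structure theorem inside successively smaller $G$-orbit closures; any $B$-stable irreducible $Y$ must meet at least one $X^0_\alpha$, to which the previous argument applies. The main obstacle is establishing such a cover without presuming the finiteness we are trying to prove: one needs the finiteness of $G$-orbits on $X$ (itself a consequence of the local structure theorem via a dimension induction on $G$-orbit closures), and one must verify at each stage that the local structure theorem can indeed be applied to a proper $G$-stable closed spherical subvariety. This interlocked induction on $G$-orbits and $B$-orbits is the heart of the argument carried out in \cite{brion} and \cite{vinberg}.
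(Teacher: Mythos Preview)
The paper provides no proof of this proposition at all: it is stated as a known result and attributed to Brion \cite{brion} and Vinberg \cite{vinberg}, with no further argument. So there is nothing to compare your proposal against in the paper itself.

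Your sketch is broadly the standard argument from those references, built around the local structure theorem and noetherian induction. A couple of points you should tighten if you want this to stand as an independent proof. First, the reduction via Sumihiro's theorem yields $G$-stable quasi-projective opens, but you then need an irreducible \emph{normal} affine $B$-stable open with a dense $B$-orbit; the passage from quasi-projective to such an affine chart is exactly what the local structure theorem gives, so invoking Sumihiro separately is somewhat redundant. Second, and more substantively, the ``interlocked induction'' you flag at the end is the real content and you have not actually carried it out: to cover $X$ by finitely many charts $X^0_\alpha$ of product form you need finiteness of $G$-orbits, which in turn is usually proved by the same local-structure-plus-induction scheme. As written, your argument is circular unless you separately establish that a spherical $G$-variety has only finitely many $G$-orbits (this is easier: each $G$-orbit closure is again spherical and of strictly smaller dimension unless dense, so a straightforward dimension induction works). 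Once that is in hand, your covering argument goes through.

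In short: your approach is correct in outline and matches the literature, but the paper itself simply cites the result rather than proving it.
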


Let $Z$ be a spherical $G$-variety. We denote by $B(Z)$ its finite
set of $B$-orbits. 

\subsection{Weak order}

We define the weak Bruhat order on $B(Z)$ as follows. Let $Y \in
B(Z)$. If $P$ is a minimal parabolic subgroup of $G$ containing $B$
such that $Y \subsetneq PY$, we write $Y'$ for the dense $B$-orbit in
$PY$ and say that $P$ raises $Y$ to $Y'$. In this case we write $Y \preccurlyeq
Y'$. These relations are the covering relations of the weak Bruhat
order. For $Y \preccurlyeq Y'$ raised by $P$, consider the proper 
morphism $q : P \times^B Y \to PY$. The following result is proved in
\cite{RS1} for symmetric varieties and extends readily to the general
case. 

\begin{prop}\label{propTypesOfEdges}
\label{prop-UNT}
One of the following occurs.
\begin{itemize}
\item Type $U$: $PY = Y \cup Y'$ and $q$ is birational.
\item Type $N$: $PY = Y \cup Y'$ and $q$ has degree 2.
\item Type $T$: $PY = Y \cup Y'\cup Y''$ with $\dim Y''=\dim Y$ and $q$ is birational.
\end{itemize}
\end{prop}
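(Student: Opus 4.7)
The plan is to analyze $q$ via stabilizers and the rank-one structure of $P$. Fix $y_0 \in Y$ and let $B_{y_0} \subseteq P_{y_0}$ be the stabilizers of $y_0$ in $B$ and $P$. Identifying $Y \cong B/B_{y_0}$, $PY = P\cdot y_0 \cong P/P_{y_0}$, and $P\times^B Y\cong P/B_{y_0}$, the map $q$ becomes the canonical projection $P/B_{y_0}\to P/P_{y_0}$, with all fibres isomorphic to $P_{y_0}/B_{y_0}$.

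A first dimension count yields $\dim(P\times^B Y)=\dim Y+1$, and since $Y\subsetneq PY$ with $q$ surjective we deduce $\dim PY=\dim Y+1=\dim Y'$. Consequently $\dim P_{y_0}=\dim B_{y_0}$, so the fibre of $q$ is finite, in bijection with the image of the composition $P_{y_0}\hookrightarrow P\twoheadrightarrow P/B\cong\P^1$. The identity component $P_{y_0}^\circ$ has connected (hence singleton) image in this finite set and thus fixes $eB$; so $P_{y_0}^\circ\subseteq B$ and $P_{y_0}^\circ=B_{y_0}^\circ$, and the size of the fibre is controlled by the residual action of the component group $\pi_0(P_{y_0})$ on $\P^1$.

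Next, the Bruhat decomposition $P=B\sqcup BsB$, where $s$ is the simple reflection of $P$, stratifies $P/B_{y_0}$ into the closed zero section $B/B_{y_0}\cong Y$ of dimension $\dim Y$ and an open dense part $(BsB)/B_{y_0}$ of dimension $\dim Y+1$. Under $q$ the image of the open part is dense in $PY$, hence contains $Y'$, while the zero section maps isomorphically onto a $B$-orbit in $PY$. Using the $SL_2$-Levi of $P$ and the fact that closed subgroups of $SL_2$ or $PGL_2$ with a finite orbit on $\P^1$ either fix a point or swap two points, one finds exactly three possibilities compatible with sphericity: $q$ is birational and $PY=Y\cup Y'$ (type $U$); $q$ has degree $2$, $PY=Y\cup Y'$, and the deck involution permutes the two preimages of a generic point of $Y'$ (type $N$); or $q$ is birational and $(BsB)/B_{y_0}$ splits into two $B$-orbits, one of dimension $\dim Y$ mapping onto an extra $B$-orbit $Y''\ne Y$ of $PY$ of dimension $\dim Y$, the other mapping onto $Y'$ (type $T$).

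The main technical obstacle is to rule out the remaining configurations (fibre of size greater than $2$, or $q$ of degree $2$ combined with an extra $B$-orbit $Y''$ in $PY$). This requires a careful analysis of how the component group $\pi_0(P_{y_0})$ acts on $\P^1 = L/(L\cap B)$ via the Levi $L$, and of how the open Bruhat cell $(BsB)/B_{y_0}$ decomposes under the $B$-action. Sphericity of $Z$, which guarantees the finiteness of the $B$-orbit set of $PY$, is essential throughout.
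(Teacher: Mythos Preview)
The paper does not supply its own proof of this proposition; it simply records that the result is proved in \cite{RS1} for symmetric varieties and extends to the general spherical case. Your outline is exactly the standard argument behind that citation (due to Richardson--Springer, later refined by Knop and Brion), so you are reconstructing what the paper merely invokes.

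Your reduction is correct: the fibres of $q$ are copies of $P_{y_0}/B_{y_0}$, finite because $\dim PY=\dim Y+1$, and both the degree of $q$ and the $B$-orbit structure of $PY$ are controlled by the image $H$ of $P_{y_0}$ in $\Aut(P/B)\cong PGL_2$. The step you flag as the technical obstacle is short once phrased properly. Sphericity forces $H$ to have an \emph{open} orbit on $\P^1$, so $\dim H\geq 1$; your observation $P_{y_0}^\circ\subseteq B$ forces $H^\circ$ to lie in a Borel of $PGL_2$. Hence $H^\circ$ is one of $\G_a$, a maximal torus $T'$, or the Borel itself. Since $H\subseteq N_{PGL_2}(H^\circ)$ and one has $N(\G_a)=B_{PGL_2}$, $N(B_{PGL_2})=B_{PGL_2}$, $[N(T'):T']=2$, there are only three outcomes. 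If $H\subseteq B_{PGL_2}$ then $H$ fixes $eB$ and has two orbits on $\P^1$: this is type $U$, with $P_{y_0}=B_{y_0}$ and $q$ an isomorphism. If $H=T'$ then $H$ has three orbits and fixes $eB$: this is type $T$, again $q$ an isomorphism, and the second $T'$-fixed point gives $Y''$ with $\dim Y''=\dim Y$ because the corresponding $P$-stabiliser is a conjugate of $P_{y_0}$ whose identity component again lies in $B$. If $H=N(T')$ then $H$ has two orbits, the closed one consisting of the pair of $T'$-fixed points swapped by $H/H^\circ$: this is type $N$, with $[P_{y_0}:B_{y_0}]=2$. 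This exhausts the cases and disposes of both configurations you were worried about: fibres of size greater than $2$ are excluded because $[N(H^\circ):H^\circ]\leq 2$ in every case, and degree $2$ combined with a third orbit is excluded because $N(T')$ has exactly two orbits on $\P^1$.
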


\subsection{Varieties of minimal rank} Recall the following
definition. 

\begin{defn}\label{dfnMinimalRank}
A $G$-spherical variety is called of minimal rank if the covering
relations are only of type $U$. 
\end{defn}

\begin{prop}{\cite{ressayre}}
Any $G$-orbit of minimal rank contains a unique minimal $B$-orbit for
the weak order. 
\end{prop}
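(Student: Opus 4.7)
The plan is to exploit the rigidity of type $U$ coverings. Since $O$ is of minimal rank, Definition~\ref{dfnMinimalRank} together with Proposition~\ref{prop-UNT} imply that every covering $Y \preccurlyeq Y'$ in the weak order on $B(O)$ satisfies $PY = Y \sqcup Y'$, $q \colon P \times^B Y \to PY$ birational, and $\dim Y' = \dim Y + 1$. In particular, given a target $Y'$ and a minimal parabolic $P$ raising to it, the predecessor is uniquely determined as $PY' \setminus Y'$, so raising has a deterministic inverse.

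First I would verify that $O$ has a unique $B$-orbit at the top, namely its open orbit $O^\circ$: any $B$-orbit $Y \subsetneq O$ admits some raising, since otherwise $Y$ would be $P$-stable for every minimal $P$, hence $G$-stable, forcing $Y = O$ and contradicting $\dim Y < \dim O$. Iterated raising from $Y$ must then terminate at the unique $B$-orbit of dimension $\dim O$, which is $O^\circ$. So every $B$-orbit lies below $O^\circ$, and by finiteness of $B(O)$ the set of minimal elements is non-empty.

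For uniqueness of the minimum, let $Y_1, Y_2 \in B(O)$ be minimal. Pick shortest raising chains $Y_i \preccurlyeq \cdots \preccurlyeq O^\circ$ with associated sequences of minimal parabolics. A diamond/exchange argument then applies: whenever two distinct minimal parabolics $P$ and $P'$ both raise an orbit $Y$, the type $U$ condition (one in, one out) lets one close the square, producing a common orbit above both raisings and a relation mirroring a braid/commutation in $W$. Iterating this exchange along the two chains transports one to the other while preserving the endpoint $O^\circ$, forcing $Y_1 = Y_2$. The main obstacle is establishing the diamond property at each step: this is where Ressayre's analysis in \cite{ressayre} identifies $B(O)$ with a parabolic quotient of the Weyl group equipped with the weak Bruhat order, so that the unique minimum of $B(O)$ corresponds to the identity coset. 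Once this identification is in hand the conclusion is immediate, but without the minimal rank hypothesis the types $N$ and $T$ edges would break the uniqueness of predecessors and the exchange argument would fail.
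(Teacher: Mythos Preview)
The paper does not supply a proof of this proposition; it simply records it as a result of Ressayre and cites \cite{ressayre}. So there is no ``paper's own proof'' to compare against.

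That said, your proposal is not a self-contained argument. The first two paragraphs are fine: connectivity of the weak-order graph and existence of a maximum $O^\circ$ follow from standard facts, and minimal elements exist by finiteness. The trouble is the third paragraph. You assert that the type~$U$ condition ``lets one close the square'', but nothing in Proposition~\ref{propTypesOfEdges} says that if $P$ and $P'$ both raise $Y$ then the two raisings fit into a diamond; type~$U$ constrains a single raising $Y \preccurlyeq Y'$, not the interaction of two different parabolics. You then concede this gap yourself and invoke Ressayre's identification of $B(O)$ with a parabolic quotient $W/W_H$ under the weak Bruhat order. But that identification is exactly the content of the proposition you are trying to prove (indeed, the uniqueness of the minimal orbit is what corresponds to the identity coset being the unique minimum). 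So the argument is circular at the crucial step.

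If you want an honest proof, you must either reproduce Ressayre's construction of the map $W \to B(O)$ and show its fibres are cosets of a reflection subgroup, or give an independent combinatorial proof of the diamond property for type~$U$ graphs---and the latter does not follow formally from the $U$/$N$/$T$ trichotomy alone.
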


\begin{lemma}\label{lemma-resol}
Let $Z$ be a $G$-spherical variety and let $x \in Z$ such that $G
\cdot x$ has minimal rank. Let $Y= \overline{B.x}$ and let $Y_0$ be
the closure of the minimal $B$-orbit in $G \cdot x$.  
Then there exists a sequence of minimal parabolics $P_1 , \ldots ,
P_m$ such that  
$$f: P_1 \times^B \ldots \times^B P_m \times^B Y_0 \to Y , \quad [p_1
  , \ldots ,p_m , x] \mapsto p_1 \ldots p_m  \cdot x  ,$$ 
is birational and projective. In particular, $f$ is a resolution of
singularities if and only if $Y_0$ is a smooth variety. 

If furthermore all $G$-orbits of $Z$ are of minimal rank, then the above resolution has connected fibers.
\end{lemma}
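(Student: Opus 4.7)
The plan is to iterate type $U$ coverings of the weak order, starting from the unique minimal $B$-orbit of $G\cdot x$. The preceding proposition of Ressayre provides a unique $\preccurlyeq$-minimal $B$-orbit $B\cdot x_0 \subset G\cdot x$, whose closure is $Y_0$. Since $G\cdot x$ has minimal rank, every covering relation inside $G\cdot x$ is of type $U$ (Definition~\ref{dfnMinimalRank}), so one can pick a saturated chain
$$B\cdot x_0 = V_0 \preccurlyeq V_1 \preccurlyeq \cdots \preccurlyeq V_m = B\cdot x$$
in the weak order on $B(G\cdot x)$, with each step $V_{i-1}\preccurlyeq V_i$ raised by a minimal parabolic $Q_i \supset B$. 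By Proposition~\ref{propTypesOfEdges} applied at each step, the morphism $Q_i\times^B V_{i-1}\to V_{i-1}\cup V_i$ is proper and birational; passing to closures yields a proper birational surjection $Q_i\times^B \overline{V_{i-1}}\to\overline{V_i}$.

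Relabel $P_i:=Q_{m-i+1}$ so that $P_1$ corresponds to the outermost (last) raising. I then factor $f$ as the telescoping composition
$$P_1\times^B\cdots\times^B P_m\times^B Y_0 \longrightarrow P_1\times^B\cdots\times^B P_{m-1}\times^B \overline{V_1}\longrightarrow\cdots\longrightarrow \overline{V_m} = Y,$$
where the $j$-th arrow is induced by the type $U$ closure map from the first paragraph, applied to the two innermost factors. Each arrow is birational (by that step) and projective (since $P_i/B\cong\mathbb{P}^1$ is projective), hence $f$ is birational and projective. Moreover each $P_i\times^B(-)$ is a Zariski-locally trivial $\mathbb{P}^1$-bundle, so the source of $f$ is a tower of $\mathbb{P}^1$-bundles over $Y_0$, and is therefore smooth exactly when $Y_0$ is smooth; in that case $f$ is a projective birational morphism from a smooth variety, i.e.\ a resolution of singularities.

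For the connected fibers assertion, assume every $G$-orbit of $Z$ is of minimal rank and induct on $m$. Writing $f=f_1\circ(\id_{P_1}\times^B g)$, with $g$ the analogous resolution of $\overline{V_{m-1}}$ built from $P_2,\ldots,P_m$ and $Y_0$ (hence with connected fibers by induction), and $f_1\colon P_1\times^B \overline{V_{m-1}}\to Y$ the outermost type $U$ collapse, it suffices to show that $f_1$ has connected fibers. A fiber $f_1^{-1}(y)$ embeds as a closed subvariety of $P_1/B\cong\mathbb{P}^1$ via $[p,v]\mapsto pB$ (since then $v=p^{-1}y$ is forced), so it is either finite or all of $\mathbb{P}^1$. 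The global minimal rank hypothesis prevents the local type $N$ or $T$ phenomena that would produce disconnected multi-point fibers, leaving only a single point or the full $\mathbb{P}^1$, both connected.

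The hard part is precisely this last step. Ruling out multi-point finite fibers of $f_1$ over a point $y$ that lies in a smaller $G$-orbit $O\subsetneq\overline{G\cdot x}$ requires using the minimal rank property of $O$ itself (not just of $G\cdot x$); this is what the global hypothesis on $Z$ is designed to supply, and is where the induction really bites.
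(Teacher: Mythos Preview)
Your construction of the chain $V_0 \preccurlyeq \cdots \preccurlyeq V_m$, the telescoping factorisation of $f$, and the argument that $f$ is birational, projective, and a resolution exactly when $Y_0$ is smooth are all correct and match the paper's approach.

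The gap is in the connected fibers argument. You correctly reduce to showing that $f_1 : P_1 \times^B \overline{V_{m-1}} \to Y$ has connected fibers, and you correctly observe that each fiber sits inside $P_1/B \cong \mathbb{P}^1$, hence is either finite or all of $\mathbb{P}^1$. But the sentence ``the global minimal rank hypothesis prevents the local type $N$ or $T$ phenomena that would produce disconnected multi-point fibers'' is an assertion, not an argument, and your final paragraph explicitly concedes that you have not actually carried out this step.

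Here is what is missing. The map $f_1$ is $P_1$-equivariant, so it suffices to understand the fiber over one point in each $P_1$-orbit. For $y \in Y$, the $P_1$-orbit $P_1 \cdot y$ is a single $P_1$-homogeneous space and therefore contains at most two $B$-orbits: a closed one $Y_\alpha$ and (possibly) an open one $Y_\beta$. If both lie in $\overline{V_{m-1}}$ (or if there is only one), then $P_1 \cdot y \subseteq \overline{V_{m-1}}$ and the fiber is all of $P_1/B$. Otherwise only $Y_\alpha \subseteq \overline{V_{m-1}}$, and then the fiber over $y$ coincides with the fiber of the restricted map $q_\alpha : P_1 \times^B Y_\alpha \to P_1 Y_\alpha$. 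Now $P_1$ genuinely raises $Y_\alpha$ to $Y_\beta$; by the hypothesis that every $G$-orbit of $Z$ has minimal rank, this raising is of type $U$, so $q_\alpha$ is birational. But $q_\alpha$ is a $P_1$-equivariant map between two $P_1$-homogeneous spaces (source $P_1 \times^B Y_\alpha \cong P_1/B_{y_\alpha}$, target $P_1 Y_\alpha = P_1 \cdot y_\alpha$), hence a fiber bundle; birationality then forces every fiber to be a single point. This is precisely how the minimal rank of the ambient $G$-orbit of $y$ enters, and it is the step your proposal gestures at but does not supply.
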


\begin{proof}
Since the dense $B$-orbits of $Y$ and $Y_0$ are in $G \cdot x$ and since
the dense $B$-orbit in $Y_0$ is the only minimal orbit for the weak
order in $G \cdot x$, there exists a sequence of minimal parabolic
subgroups 
$P_1,\cdots,P_m$ raising $Y_0$ to $Y$. Since case (N) does not occur,
we see that the surjective and projective map
$$f: P_1 \times^B \ldots \times^B P_m \times^B Y_0 \to Y$$
is birational. The map $f$ is a resolution of singularities if and
only if $P_1 \times^B \ldots \times^B P_m \times^B Y_0$ is smooth.  

We now prove that $f$ has connected fibers. By induction it suffices to prove that for any $B$-orbit $Y'$ in $Y$ which is raised by a parabolic $P$, the map $q : P \times^B Y' \to P \cdot Y'$ has connected fibers.  We have the equality 
$q^{-1}(x) \simeq \left\{ p \in P \mid p^{-1} \cdot x \in Y' \right\}/B \subseteq P / B$. Note that $q$ is $P$-equivariant. If $P \cdot x \subseteq Y'$, then $q^{-1}(x) = P/B$ is connected. Else, since we are in type $U$, $q$ is birational and $x$ is in the dense open orbit of $PY'$ therefore $q^{-1}(x)$ consists of one point.
\end{proof}


\section{Nilpotent orbits of height $2$}
\label{induction}
Most of the results on nilpotent orbits of height $2$ in this section
are based on the paper \cite[Section 3]{FR} (see also
\cite{Panyushev}). 

\subsection{Reductive case}

Let $G$ be a reductive group. Since the kernel of the adjoint action
is the center of $G$, we can mod out the center and assume that $G$ is
semisimple of adjoint type. Applying Lemma 3.2 from \cite{FR} we get
the following result. 

\begin{lemma}
The group $G$ is a product $G = G_1 \times \cdots \times G_r$ of
simple groups and any nilpotent element $x \in \g$ is a sum $x = x_1 +
\cdots + x_r$ of nilpotent elements $x_i \in \g_i$. 
\end{lemma}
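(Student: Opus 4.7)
The plan is to read off the statement from the standard structure theorem for semisimple groups of adjoint type and the behaviour of $\ad$ with respect to direct sums of ideals.

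First I would recall that a semisimple algebraic group of adjoint type canonically splits as a direct product $G = G_1 \times \cdots \times G_r$ of simple adjoint groups, indexed by the connected components of its Dynkin diagram. Passing to Lie algebras gives the corresponding decomposition $\g = \g_1 \oplus \cdots \oplus \g_r$ into simple ideals. Any element $x \in \g$ then has a unique expression $x = x_1 + \cdots + x_r$ with $x_i \in \g_i$, and it remains only to check that each component $x_i$ is nilpotent.

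Next I would use the ideal structure: since $[\g_i, \g_j] = 0$ for $i \neq j$, the operator $\ad_x$ preserves each summand $\g_j$, and its restriction to $\g_j$ is exactly $\ad_{x_j}$. In other words, with respect to the decomposition $\g = \bigoplus_i \g_i$, the operator $\ad_x$ is block diagonal with blocks $\ad_{x_i}$ on $\g_i$. Taking powers preserves the block form, so $\ad_x^N = 0$ if and only if $\ad_{x_i}^N = 0$ on each $\g_i$. In particular, nilpotency of $\ad_x$ as an endomorphism of $\g$ forces $\ad_{x_i}$ to be nilpotent as an endomorphism of $\g_i$, which is precisely the condition that $x_i$ be nilpotent in $\g_i$.

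There is no real obstacle here; the argument is essentially a bookkeeping check and is exactly the content of \cite[Lemma 3.2]{FR} invoked in the text. The only point deserving care is that nilpotency of $x_i$ is tested against $\ad$ on the \emph{simple ideal} $\g_i$ rather than on all of $\g$, but this is automatic since $\g_i$ is $\ad_{x_i}$-stable and $\ad_{x_i}$ acts as zero on the complementary ideal $\bigoplus_{j \neq i} \g_j$.
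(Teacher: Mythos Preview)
Your argument is correct and is exactly the content of \cite[Lemma 3.2]{FR}, which is all the paper invokes for this lemma; you have simply unpacked that citation. There is nothing to add.
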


\begin{cor}
\label{cor-red}
The $B$-orbit closures in $\overline{G \cdot x}$ are products of orbit
closures of Borel subgroups in nilpotent orbits of simple groups. 
\end{cor}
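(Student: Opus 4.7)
The plan is to leverage the product decomposition from the preceding lemma. First I would quotient by the center of $G$, which acts trivially under the adjoint action and is contained in every Borel subgroup; this does not change the $B$-orbit structure on $\g$, so we may assume $G$ is semisimple of adjoint type and thus $G = G_1 \times \cdots \times G_r$ with each $G_i$ simple. A Borel subgroup of such a product is of the form $B = B_1 \times \cdots \times B_r$, where $B_i$ is a Borel of $G_i$, and the Lie algebra decomposes as a direct sum of ideals $\g = \g_1 \oplus \cdots \oplus \g_r$ with $G_i$ acting trivially on $\g_j$ for $j \neq i$.

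Next, I would use the preceding lemma to write $x = x_1 + \cdots + x_r$ with $x_i \in \g_i$ nilpotent. Since the $G_i$-actions on the different factors commute and only affect their own summand, the $G$-orbit decomposes as $G \cdot x = (G_1 \cdot x_1) \times \cdots \times (G_r \cdot x_r)$ under the identification $\g \cong \g_1 \times \cdots \times \g_r$, and taking closures gives $\overline{G \cdot x} = \overline{G_1 \cdot x_1} \times \cdots \times \overline{G_r \cdot x_r}$ (closure commutes with finite products in the Zariski topology).

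Finally, I would take any $y \in \overline{G \cdot x}$, write $y = y_1 + \cdots + y_r$ with $y_i \in \overline{G_i \cdot x_i}$, and observe that by the same argument applied to $B$ in place of $G$ we have $B \cdot y = (B_1 \cdot y_1) \times \cdots \times (B_r \cdot y_r)$ and hence
\[ \overline{B \cdot y} \;=\; \overline{B_1 \cdot y_1} \times \cdots \times \overline{B_r \cdot y_r}. \]
This is exactly the statement of the corollary. No serious obstacle is expected; the only point requiring care is the compatibility of closure with finite products and the reduction to adjoint type, both of which are standard.
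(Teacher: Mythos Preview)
Your argument is correct and is exactly the elaboration of what the paper leaves implicit: the paper gives no proof for this corollary, treating it as an immediate consequence of the preceding lemma and the product decomposition $G = G_1 \times \cdots \times G_r$. Your write-up simply spells out the standard details (reduction to adjoint type, $B = B_1 \times \cdots \times B_r$, and compatibility of closure with finite products), so there is nothing to correct or contrast.
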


In particular it is enough to deal with the case of simple groups. From now on we therefore assume that the group $G$ is simple.

\begin{lemma}
Let $x \in \g$ with $\haut(x) = 2$. Then $\haut(y) = 2$ for any $y \in
\overline{G \cdot x} - \{0\}$.
\end{lemma}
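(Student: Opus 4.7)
The plan is to use the characterization of height as $\haut(y) = \max\{k \mid (\ad y)^k \neq 0\}$, which is equivalent to the definition via $\mathfrak{sl}_2$-triples. To see this equivalence, note that for $y$ of height $n$ with triple $(y,h,z)$, the operator $\ad y$ shifts the $h$-weight grading by $2$. Since the weights occurring are symmetric about $0$ and bounded by $n$ in absolute value, $(\ad y)^n$ sends $\g(-n)$ into $\g(n)$ and is nonzero there, while $(\ad y)^{n+1}$ must vanish by the weight bound.

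With this characterization, I would proceed in two steps. First, I would observe that $\haut(x) = 2$ means $(\ad x)^3 = 0$, and that this condition is $G$-invariant (since $\ad_{gx} = \Ad(g)\circ \ad_x\circ \Ad(g)^{-1}$), so $(\ad v)^3 = 0$ for every $v \in G\cdot x$. Since the set $\{v \in \g \mid (\ad v)^3 = 0\}$ is Zariski-closed (it is cut out by the polynomial vanishing of the entries of $(\ad v)^3$ on a basis of $\g$), this condition passes to $\overline{G\cdot x}$. Hence every $y \in \overline{G\cdot x}$ satisfies $\haut(y) \leq 2$ (understanding the inequality as vacuous when $y = 0$, since the orbit closure also contains nilpotent elements only, as the nilpotent cone is closed and $G$-stable).

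Second, I would show the reverse inequality $\haut(y) \geq 2$ for any nonzero nilpotent $y$. This is immediate from the definition: given an $\mathfrak{sl}_2$-triple $(y,h,z)$ for $y$, we have $[h,y] = 2y$, so $y \in \g(2)$; as $y \neq 0$, the eigenspace $\g(2)$ is nonzero and thus $\haut(y) \geq 2$.

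Combining both steps yields $\haut(y) = 2$ for any $y \in \overline{G\cdot x}\setminus\{0\}$. There is no serious obstacle here; the only substantive ingredient is the reformulation of height in terms of nilpotency index of $\ad y$, which is what makes semicontinuity available.
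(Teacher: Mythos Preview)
Your proposal is correct and follows essentially the same approach as the paper: both argue via the nilpotency order of $\ad_y$, using that $(\ad x)^3=0$ is a closed $G$-invariant condition and that height at least $3$ would force $(\ad y)^3\neq 0$. You are simply more explicit than the paper about the equivalence $\haut(y)=\max\{k\mid(\ad y)^k\neq 0\}$ and about the lower bound $\haut(y)\geq 2$, which the paper leaves implicit.
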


\begin{proof}
Since $G \cdot y$ lies in the closure of $G \cdot x$, the order of
nilpotency of $\ad_y$ is smaller than that of $\ad_x$. But $\ad_x$ has
order at most $3$ and elements of height bigger than $2$ have order at
least $4$. 
\end{proof}

\subsection{Structure of the $G$-orbits}

 Let $x \in \g$ be nilpotent of height $2$. Then
using a $\mathfrak{sl}_2$-triple, we have a decomposition of the Lie
algebra $\g = \g(-2) \oplus \g(-1) \oplus \g(0) \oplus \g(1) \oplus
\g(2)$ with $x \in \g(2)$. Let $P$ be the parabolic subgroup of $G$
with Lie algebra $\g(0) \oplus \g(1) \oplus \g(2)$. Its unipotent
radical $R_u(P)$ has Lie algebra $\g(1) \oplus \g(2)$. Let $L$ be the
Levi factor of $P$ with Lie algebra $\g(0)$.

The closure of $G\cdot x$ in $\g$ is $G \cdot \g(2)$ (see \cite[Lemma
  2.31]{FR}). We also have $C_G(x) = C_P(x)$ (see \cite[Theorem
  2.14]{FR}). We get an isomorphism $G \cdot x = G/C_G(x) = G/C_P(x)
= G \times^P P/C_P(x)$. Furthermore, $R_u(P)$ acts trivially on $x$
and $C_P(x)$ is the semidirect product of $C_L(x)$ with $R_u(P)$ (see
\cite[Proposition 2.24]{FR}). We get an isomorphism $G \cdot x = G
\times^P L/C_L(x)$. Since $L \cdot x$ is dense in $\g(2)$ (see
\cite[Corollary 2.21]{FR}) we get a birational morphism
$$G \times^P \g(2) \to \overline{G \cdot x}$$
given by $[g,y] \mapsto g \cdot y$. This morphism is an isomorphism on
the $G$-orbit which is therefore obtained from $L \cdot x$ via
parabolic induction: $G \cdot x \simeq G \times^P L \cdot x$.

\subsection{Structure of the $L$-orbits}
\label{L-orbit}

In this subsection we want to understand the possible pairs
$(L,\g(2))$ occuring in the above discussion. Consider the Lie
subalgebra $\g_E = \g(-2) \oplus \g(0) \oplus  \g(2)$. There is a
closed reductive subgroup $G_E$ of $G$ with Lie algebra $\g_E$. It
contains a parabolic subgroup $P_E$ whose Lie algebra is $\g(0) \oplus
\g(2)$. Furthermore, the unipotent radical $R_u(P_E)$ of $P_E$ has Lie
algebra $\g(2)$ and $P_E$ contains $L$ as a Levi subgroup. Note that
the unipotent radical of $P_E$ is abelian. This is quite restrictive,
indeed the type of the pair $(G_E,P_E)$ is one the pairs given in the
following list (we eventually mod out factors acting trivially and
give a list modulo isomorphism of the Dynkin diagram). In this list
$P_m$ corresponds to the maximal parabolic subgroup associated to the
$m$-th node of the Dynkin diagram with notation as in Bourbaki
\cite{bourbaki}: $(A_n,P_m)$, $(B_n,P_1)$,$(C_n,P_n)$, $(D_n,P_1)$,
$(D_n,P_n)$, $(E_6,P_6)$, $(E_7,P_7)$. 

An easy check proves that the cases $(A_n,P_m)$ with $m \neq n - m$,
the cases $(D_n,P_n)$ with $n$ odd and the case $(E_6,P_6)$ never
occur. Indeed, if we compute the decomposition of the
$\mathfrak{sl}_2$-triple $(y,h,x)$ in $\g_E$, we get $\g_E(1) \neq 0$
for these cases. A contradiction.  
In the next table we list the possibilities. In the last column we
describe the centraliser of our nilpotent element $x \in \g(2)$
(recall that $x$ has a dense $L$-orbit in $\g(2)$ -- these stabilisers
are well known, see for example \cite{kac} or \cite{ressayre}). 

\vskip 0.1 cm

\renewcommand{\arraystretch}{1.2}
$$\begin{array}{c|c|c|c|c|c}
\textrm{Case} & G_E & P_E & L & \g(2) & C_L(x) \\
\hline
1 & A_{2n-1} & P_n & A_{n-1} \times A_{n-1} & M_n(\kk) & A_{n-1} \\
2 & B_{n} & P_1 & B_{n-1}  & \kk^{2n-1} & B_{n-2} \\
3 & C_{n} & P_n & A_{n-1} & M^s_n(\kk) & B_{\frac{n-1}{2}} \textrm{ or
} D_{\frac{n}{2}} \\ 
4 & D_{n} & P_1 & D_{n-1} & \kk^{2n-2} & D_{n-2} \\
5 & D_{2n} & P_{2n} & A_{2n-1} &  M^a_{2n}(\kk) & C_{n} \\
6 & E_7 & P_7 & E_6 & \kk^{27} & F_4 \\
\end{array}$$	
\vskip 0.1 cm
\centerline{Table 1. Pairs $(G_E,P_E)$.}

\vskip 0.2 cm

\noindent
In the above table, we write $M_n(\kk)$ for the Jordan algebra of
square matrices of size $n$ over $\kk$, $M^s_n(\kk)$ for the
subalgebra of symmetric matrices and $M^a_n(\kk)$ for the subalgebra of
antisymmetric matrices. The vector space $\kk^{27}$ is endowed with
the structure of the unique exceptional semisimple Jordan algebra (of
antisymmetric matrices of rank $3$ over the octonions). Adding a unit
to the vector space $\kk^{n-2}$ we get the rank $2$ Jordan algebra
associated to any non-degenerate quadratic form on $\kk^{n-2}$.  

\subsection{Minimal rank and resolutions}

\begin{lemma}
If $G_E$ is simply laced, the $L$-orbit $L \cdot x$ in $\g(2)$ is of minimal rank. 
\end{lemma}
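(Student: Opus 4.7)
My plan is to verify case-by-case that in each of the four simply laced entries of Table~1 (cases 1, 4, 5 and 6), the dense $L$-orbit $L \cdot x$ in $\g(2)$ is an $L$-spherical variety of minimal rank. The starting point is the numerical criterion of Ressayre \cite{ressayre}: a spherical homogeneous space $L/H$ is of minimal rank if and only if
\[
\rk_L(L/H) = \rk(L) - \rk(H),
\]
where $\rk_L(L/H)$ is the rank of the lattice of $B$-weights of rational $B$-semi-invariants on $L/H$ and $\rk(H)$ denotes the rank of the reductive quotient of $H$.

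Reading off from Table~1 one immediately obtains $\rk(L) - \rk(C_L(x))$ equal to $n-1$, $1$, $n-1$, and $2$ in cases 1, 4, 5, 6 respectively. In each simply laced case the quotient $L/C_L(x)$ is a classical (quasi-)symmetric space whose spherical rank is standard: the group case of $A_{n-1}$ viewed as an $(A_{n-1} \times A_{n-1})$-variety in case~1, a rank-one sphere-like variety (open $L$-orbit on the natural representation with invariant quadratic form) in case~4, the symmetric space $\SL_{2n}/\Sp_{2n}$ of rank $n-1$ in case~5, and the exceptional symmetric space $E_6/F_4$ of rank $2$ in case~6. In every case the spherical rank agrees with $\rk(L) - \rk(C_L(x))$, and Ressayre's criterion then yields minimal rank.

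The main obstacle is the precise identification of $L \cdot x$ and the spherical rank computation in case~4, where the stabilizer $C_L(x)$ has a nontrivial unipotent radical (only its reductive part is $D_{n-2}$), so $L \cdot x$ is not literally a symmetric space in the usual sense. I would handle this by exhibiting directly a maximal torus $T \subset L$ and checking that its lattice of weights acting on $B$-semi-invariants in $\g(2)$ has rank $1$, essentially accounted for by the unique (up to scalar) $L$-semi-invariant quadratic form on $\g(2)$. Once this last verification is in place, the four cases together complete the proof, and it is precisely this equality of ranks that is expected to fail in the non-simply-laced cases~2 and~3 of Table~1.
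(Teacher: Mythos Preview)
Your approach is essentially the same as the paper's: both proofs reduce to citing Ressayre's work \cite{ressayre}. The paper's proof is a single sentence, ``Follows from the above description and \cite{ressayre}'', which almost certainly means: read off the pairs $(L,C_L(x))$ from Table~1 in the simply laced cases and recognise each resulting homogeneous space $L\cdot x \cong L/C_L(x)$ as one appearing in Ressayre's \emph{classification} of spherical homogeneous spaces of minimal rank (the group case for case~1, the symmetric spaces $\SO_{2m}/\SO_{2m-1}$, $\SL_{2n}/\Sp_{2n}$, $E_6/F_4$ for cases~4,~5,~6).

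You instead invoke a different part of the same paper, Ressayre's numerical rank criterion, and verify it case by case. This is a perfectly valid alternative and your rank computations are correct. The trade-off is that the classification lookup is shorter and sidesteps the issue you flag in case~4: once you know $\SO_{2m}/\SO_{2m-1}$ is on Ressayre's list, there is nothing further to check about the structure of $C_L(x)$. Your route requires the extra verification of the spherical rank in case~4 that you sketch at the end, but it has the advantage of being more self-contained and of making transparent \emph{why} the non-simply-laced cases~2 and~3 fail (the rank equality breaks down there).
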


\begin{proof}
Follows from the above description and \cite{ressayre}.  
\end{proof}

\begin{cor}
If $G$ is simply laced, for any $x \in \g$
nilpotent of height $2$, the $G$-orbit in $\overline{G \cdot x}$ is
of minimal rank.  
\end{cor}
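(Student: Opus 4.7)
The plan is to combine the preceding lemma with the parabolic induction $G \cdot x \simeq G \times^P L \cdot x$ from Section \ref{induction}.

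First, I would check that when $G$ is simply laced, the auxiliary reductive subgroup $G_E$ of Section \ref{L-orbit} is again simply laced: since $G_E$ is a closed reductive subgroup of $G$, its roots form a sub-root-system of the root system of $G$, and a subsystem of a simply laced system is simply laced. (Alternatively, inspecting Table 1, only Cases 1, 4, 5, 6 may occur, and in each $G_E$ is of type $A$, $D$, or $E$.) By the previous lemma, the $L$-orbit $L \cdot x \subset \g(2)$ is therefore of minimal rank as an $L$-spherical variety.

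Next I would transfer the minimal rank property to $G \cdot x$ via the parabolic induction. Using the Bruhat decomposition $G/P = \bigsqcup_w BwP/P$, any $B$-orbit of $G \times^P L\cdot x$ lies over a single Schubert cell $BwP/P$ and its fiber is an orbit of $w^{-1}Bw \cap P$ on $L \cdot x$, which via the projection $P \twoheadrightarrow L$ is an orbit of the Borel subgroup $w^{-1}Bw \cap L$ of $L$. Every covering relation $Y \preccurlyeq Y'$ in the weak order on $B$-orbits of $G \cdot x$ can then be classified according to whether the raising minimal parabolic $P_\alpha$ moves $Y$ to a larger Schubert cell or keeps it in the same one. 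In the first case one recovers a covering relation of the Bruhat order on $G/P$, which is of type $U$ since Schubert varieties are themselves of minimal rank. In the second case one recovers a covering relation for a Borel subgroup of $L$ acting on $L \cdot x$, which is of type $U$ by the first step. Hence no relation of type $N$ or $T$ can appear, so $G \cdot x$ is of minimal rank.

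Finally, for any $G$-orbit $G \cdot y$ in $\overline{G \cdot x}$ with $y \neq 0$, the earlier lemma gives $\haut(y) = 2$, so the same argument applied to $y$ (with its own $\mathfrak{sl}_2$-triple, parabolic $P_y$, Levi $L_y$ and auxiliary $G_{E,y}$, which is still a reductive subgroup of the simply laced $G$ and hence simply laced) shows that $G \cdot y$ is of minimal rank as well. The main obstacle is the second step: making precise the claim that the weak order on $B$-orbits of the parabolically induced variety $G \times^P L \cdot x$ decomposes cleanly into "Schubert'' covering relations and "fiberwise'' covering relations, with no mixed covers of type $N$ or $T$. This requires identifying the fiberwise action with a Borel action on $L \cdot x$ so that Ressayre's characterization applies verbatim.
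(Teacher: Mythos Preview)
Your approach is essentially the same as the paper's: reduce to an arbitrary $G$-orbit in the closure via the height-$2$ lemma, use the parabolic induction $G \cdot x \simeq G \times^P L \cdot x$, show $G_E$ is simply laced so that the previous lemma gives $L \cdot x$ of minimal rank, and then transfer minimal rank along the induction. The paper's proof is only a few lines because for your ``main obstacle'' it simply cites \cite[Lemma~6]{brion1}: parabolic induction from a spherical $L$-variety of minimal rank yields a spherical $G$-variety of minimal rank, with the weak order factoring exactly into the Schubert and fiberwise pieces you describe. So you do not need to reprove this; what you sketched is precisely the content of Brion's lemma, and your decomposition of covering relations into ``moves the Schubert cell'' versus ``stays in the cell and raises in the fiber'' is the right picture. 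One small point: for the claim that $G_E$ is simply laced, the paper says its Dynkin diagram is a subdiagram of that of $G$; your argument via sub-root-systems (the roots of $\g_E$ are among the roots of $\g$, hence all of one length) is a clean justification of the same fact.
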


\begin{proof}
Let $y \in \overline{G \cdot x}$. Then $\haut(y) = 2$. Replacing
$x$ by $y$, we may assume $y = x$. Since $G \cdot x = G \times^P L
\cdot x$, it is enough to prove that $L \cdot x$ is of minimal
rank (see \cite[Lemma 6]{brion1}). By the above lemma, it is enough to
prove that $G_E$ is simply laced. But the Dynkin diagram of $G_E$ is a subdiagram
of that of $G$. 
\end{proof}

\begin{cor}
\label{cor-resol2}
Let $G$ be a simple simply laced group, let $x \in \g$
nilpotent of height $2$ and let $Y$ be a $B$-orbit closure.

Then there exists a unique minimal $B$-orbit $Y_0$ in the dense
$G$-orbit of $GY$ and a sequence of minimal parabolics $P_1 , \ldots ,
P_m$ such that  
$$f: P_1 \times^B \ldots \times^B P_m \times^B Y_0 \to Y , \quad [p_1
  , \ldots ,p_m , x] \mapsto p_1 \ldots p_m  \cdot x  ,$$ 
is birational, projective with connected fibers. In particular, $f$ is a resolution of 
singularities if and only if $Y_0$ is a smooth variety. 
\end{cor}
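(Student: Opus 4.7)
The plan is to deduce this from Lemma \ref{lemma-resol} applied to the spherical $G$-variety $Z = \overline{GY}$, using the previous corollary to supply the minimal rank hypothesis throughout $Z$.

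First I would pick a point $y$ in the dense $B$-orbit of $Y$, so that $Y = \overline{B \cdot y}$ and $GY = \overline{G \cdot y}$. Since $y \in \overline{G \cdot x}$ and $\haut(x) = 2$, the element $y$ is itself a height $2$ nilpotent (or $0$), so by the preceding corollary every $G$-orbit contained in $\overline{G \cdot y} \subseteq \overline{G \cdot x}$ is of minimal rank. In particular the dense $G$-orbit $G \cdot y$ of $GY$ has minimal rank, so by Ressayre's result it contains a unique minimal $B$-orbit for the weak order, whose closure I call $Y_0$.

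Next, because the dense $B$-orbit $B \cdot y$ of $Y$ and the dense $B$-orbit of $Y_0$ both sit inside $G \cdot y$, and since $Y_0$ is the unique source of weak order chains inside $G \cdot y$, there exists a sequence of minimal parabolics $P_1, \ldots, P_m$ raising $Y_0$ successively to $Y$. Minimal rank means none of these raisings is of type $N$, which forces the induced proper map
$$f : P_1 \times^B \cdots \times^B P_m \times^B Y_0 \to Y, \quad [p_1, \ldots, p_m, z] \mapsto p_1 \cdots p_m \cdot z,$$
to be birational (exactly as in the proof of Lemma \ref{lemma-resol}). Projectivity is automatic because each $P_i/B \cong \mathbb{P}^1$.

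For the connectedness of the fibers, I would invoke the last assertion of Lemma \ref{lemma-resol}: since every $G$-orbit of $Z = \overline{G \cdot y}$ is of minimal rank by the preceding corollary, the hypothesis of that clause is satisfied and the fibers of $f$ are connected. Finally, the statement that $f$ is a resolution iff $Y_0$ is smooth is immediate: the source of $f$ is a tower of $\mathbb{P}^1$-bundles over $Y_0$, hence smooth exactly when $Y_0$ is smooth, and $f$ is already birational and proper. The only subtle point in this argument is making sure the chain of raisings from $Y_0$ stays inside the minimal rank setting so that type $N$ is excluded at every step, but this is handed to us for free by the previous corollary applied to every $G$-orbit met along the way.
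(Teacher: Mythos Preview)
Your proposal is correct and follows essentially the same approach as the paper: you reduce to Lemma \ref{lemma-resol} by verifying, via the preceding corollary, that the dense $G$-orbit $G\cdot y$ (and in fact every $G$-orbit in $\overline{G\cdot y}$) is of minimal rank. The paper's own proof is just this two-sentence reduction; you have simply spelled out the internal mechanism of Lemma \ref{lemma-resol} in more detail.
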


\begin{proof}
By Lemma \ref{lemma-resol}, it is enough to check that for $y$ in the
dense $B$-orbit of $Y$, the orbit $G \cdot y$ is of minimal rank. This
is the content of the previous result.
\end{proof}


\section{Minimal $B$-orbits}

\subsection{Structure of minimal $B$-orbits}

We first prove that the closure of a minimal $B$-orbit is always a
vector space. 

\begin{prop}
\label{prop-min-ev}
Let $L$ and $\g(2)$ be as in Subsection \ref{L-orbit}. Let $x$ such
that $L \cdot x$ is dense in $\g(2)$ and let $Y_0$ be the closure in
$\g(2)$ of the minimal $B$-orbit in $L \cdot x$. Then $Y_0$ is a
vector subspace of $\g(2)$. 
\end{prop}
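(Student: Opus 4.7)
The plan is to proceed case-by-case using the classification of pairs $(L,\g(2))$ in Table 1. In each case $\g(2)$ is a concrete Jordan algebra or standard representation, and the strategy is to exhibit an explicit element $x_0$ in the dense $L$-orbit together with a $B$-stable linear subspace $V \subseteq \g(2)$ such that $B \cdot x_0$ is open (hence dense) in $V$. Then $\overline{B \cdot x_0} = V$ is automatically a vector subspace, and by the uniqueness of the minimal $B$-orbit in a spherical $L$-variety of minimal rank (Ressayre's theorem recalled above), the orbit $B \cdot x_0$ so produced must coincide with the minimum, whose closure is $Y_0$.

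For the matrix cases the construction is uniform: fix a Borel $B \subset L$ adapted to a complete flag, and let $x_0$ be an ``antidiagonal'' element of $\g(2)$. In case 1 ($L = \GL_n \times \GL_n$ acting on $M_n(\kk)$ by $(g,h)\cdot A = gAh^{-1}$) one can equivalently take $x_0 = I_n$: then $B \cdot I_n$ is the set of invertible upper-triangular matrices and $V$ is the space of all upper-triangular matrices. In case 3 (symmetric matrices) take $x_0$ to be the antidiagonal identity and $V = \{A \in M^s_n(\kk) : A_{ij} = 0 \text{ for } i+j > n+1\}$, and case 5 (antisymmetric matrices) is analogous. For the standard-representation cases 2 and 4, pick a complete $B$-stable isotropic flag with paired isotropic basis vectors $e_+, e_-$ and take $x_0 = e_+ + e_-$, with $V$ the span of the first half of the flag together with $x_0$. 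For the exceptional case 6, realize $\kk^{27}$ as the exceptional Jordan algebra $\JO$ and take $x_0$ to be a sum of three orthogonal primitive idempotents; $V$ is then the $B$-stable subspace given by the ``upper anti-triangular'' part of the Peirce decomposition with respect to this frame.

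The verification in each case is a direct computation: parametrize elements of $B$, write out $B \cdot x_0$ explicitly, and match $\dim V$ with $\dim B - \dim \mathrm{Stab}_B(x_0)$, where the stabilizer is read off from the defining equations in the Jordan algebra. In cases 1--5 this reduces to elementary linear algebra (for case 3 it is essentially an $LDL^T$-type factorization). The main obstacle is the exceptional case 6, where the Peirce calculus in $\JO$ is less transparent; it can be handled either by direct manipulation of Hermitian $3\times 3$ octonionic matrices, or by appealing to the Schubert-cell description of the open $P_6$-orbit on the Cayley plane $E_6/P_6$, from which the minimal $B$-orbit is visibly an affine space.
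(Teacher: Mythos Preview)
Your overall strategy---a case-by-case verification through Table~1, producing in each case an explicit $x_0$ and a $B$-stable linear subspace $V$ with $B\cdot x_0$ open in $V$---is exactly the paper's approach, and for cases 1--5 you supply more detail than the paper does (the paper simply says ``this is easily checked in most cases'').

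There is, however, a genuine gap in how you pass from ``$B\cdot x_0$ is open in the linear space $V$'' to ``$B\cdot x_0$ is the minimal $B$-orbit''. Your inference is: we have found \emph{some} orbit, and Ressayre says the minimum is unique, therefore our orbit is the minimum. This is a non sequitur---uniqueness of the minimum does not tell you that the particular orbit you constructed \emph{is} that minimum; a priori several $B$-orbits in $L\cdot x$ could have linear closures of different dimensions. What is missing is the (easy, but necessary) verification that $B\cdot x_0$ is closed in $L\cdot x$, i.e.\ that $V\cap L\cdot x = B\cdot x_0$. In case~1, for instance, this is the statement that every invertible upper-triangular matrix lies in the $(B_n\times B_n)$-orbit of $I_n$; it is immediate, but it must be said. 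Moreover, Ressayre's theorem only applies when $G_E$ is simply laced, so your appeal to it in cases~2 and~3 (types $B_n$ and $C_n$) is illegitimate: those $L$-orbits are \emph{not} of minimal rank (indeed the paper later exhibits type-$N$ edges there). Uniqueness of the closed $B$-orbit in the homogeneous space $L/C_L(x)$ still holds in those cases, but for a different reason, and you should supply it separately.

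For case~6 the paper takes a more direct route than your Peirce-decomposition sketch: it writes the dense $B$-orbit representative as $X_{\beta_1}+X_{\beta_2}+X_{\beta_3}$ for three explicit orthogonal roots (this shape is guaranteed by \cite[Theorem~4.14]{FR}), applies the longest element of $W_{E_6}/W_{F_4}$ to obtain the representative $y$ of the \emph{minimal} orbit, and then checks that the tangent space $[\mathfrak b,y]$ is exactly the span of the root spaces $\g_\beta$ with $\beta\le\beta_i$ for some $i$. This both pins down the minimal orbit unambiguously and proves its closure is linear in a single computation, without any appeal to uniqueness.
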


\begin{proof}
This is easily checked in most cases. In all cases, the vector space $Y_0$ is a sum of weight spaces. For case 1 for example, the closure of the minimal $B$-orbit is the subspace of upper triangular matrices. 
We only discuss case 6 with more details. 

According to \cite[Theorem 4.14]{FR}, any spherical nilpotent orbit can be represented by a sum $y = y_1 + \cdots + y_r$ with $y_i \in \g_{\beta_i}$ (here $\g_\beta$ is the root space associated to the root $\beta$) and such that the root $(\beta_i)_{i \in [1,r]}$ are pairwise orthogonal. Producing the dense $B$-orbit is then easy, for case 6 we get $\beta_1 = (2234321)$, $\beta_2 = (0112221)$ and $\beta_3 = (0000001)$ (here we write $\beta = (abcdefg)$ for $\beta = a \a_1 + b \a_2 + c \a_3 + d \a_4 + e \a_5 + f \a _6 + g \a_7$ where $(\a_1,\a_2,\a_3,\a_4,\a_5,\a_6,\a_7)$ are the simple roots of $E_7$ with notation as in \cite{bourbaki}). To get the minimal $B$-orbit apply the longest element in $W_{E_6}/W_{F_4}$. We get : $\beta_1 = (0112221)$, $\beta_2 = (1112211)$ and $\beta_3 = (1122111)$. The minimal $B$-orbit is therefore contained in the vector space $V$ spanned by the $\g_\beta$ with $\beta \leq \beta_i$ for some $i \in [1,3]$. But an easy checks proves that $[\b,y]$, the image of the tangent action, is exactly $V$ proving the result.
%
\end{proof}


\subsection{Resolution} Applying Corollary \ref{cor-resol2} and the above result on the structure of minimal $B$-orbits, we get, for simply laced groups, a resolution of singularities for any nilpotent orbit of height $2$ .

\begin{cor}
\label{cor-resol3}
Let $G$ be a simple simply laced group, let $x \in \g$
nilpotent of height $2$ and let $Y$ be a $B$-orbit closure.
Then there exists a sequence of minimal parabolics $P_1 , \ldots ,
P_m$ and a vector space $Y_0$ such that  
$$f: P_1 \times^B \ldots \times^B P_m \times^B Y_0 \to Y , \quad [p_1
  , \ldots ,p_m , x] \mapsto p_1 \ldots p_m  \cdot x  ,$$ 
is birational and projective. 
\end{cor}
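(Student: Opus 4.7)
The plan is to combine Corollary \ref{cor-resol2} with Proposition \ref{prop-min-ev}: Corollary \ref{cor-resol2} already produces the birational projective map $f$ with $Y_0$ equal to the closure of the unique minimal $B$-orbit in the dense $G$-orbit of $GY$, while Proposition \ref{prop-min-ev} shows that the closure of the minimal $B$-orbit in $L\cdot x\subset\g(2)$ is a vector subspace. It therefore suffices to identify these two minimal $B$-orbits.

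First, pick a representative $x$ of the dense $G$-orbit of $GY$. It has height $2$, and Section \ref{induction} yields an $\mathfrak{sl}_2$-triple, the grading $\g=\bigoplus_k\g(k)$, the parabolic $P$ with Levi $L$, and the parabolic induction isomorphism $G\cdot x\simeq G\times^P L\cdot x$ with $L\cdot x$ dense in $\g(2)$. After a $G$-conjugation one may arrange $B\subset P$. Let $O$ denote the unique minimal $B$-orbit of $G\cdot x$ supplied by Corollary \ref{cor-resol2}.

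The main step is to confine $O$ to the fiber $L\cdot x$. Consider the $B$-equivariant projection $\pi\colon G\cdot x\simeq G\times^P L\cdot x\to G/P$. Its image $\pi(O)$ is a Schubert cell, and must be the minimal one $\{[P]\}$: if some minimal parabolic $P_\alpha$ raised $\pi(O)$ strictly, then $\pi(P_\alpha\cdot O)\supsetneq\pi(O)$, forcing $P_\alpha\cdot O\neq O$, i.e., $P_\alpha$ would raise $O$, contradicting the minimality of $O$. Hence $O\subset\pi^{-1}([P])=L\cdot x$. Since $R_u(P)$ acts trivially on $\g(2)$ and $B=(B\cap L)\cdot R_u(P)$, the $B$-orbit $O$ agrees with a $B_L$-orbit for $B_L=B\cap L$; by uniqueness of the minimal $B_L$-orbit in $L\cdot x$ (which is spherical of minimal rank), $O$ coincides with the orbit of Proposition \ref{prop-min-ev}. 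Consequently $Y_0=\overline{O}$ is a vector subspace of $\g(2)$, and Corollary \ref{cor-resol2} delivers the desired birational projective map.

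The hard part will be the projection argument above; once the minimal $B$-orbit of $G\cdot x$ is known to sit in $L\cdot x$, Proposition \ref{prop-min-ev} immediately identifies its closure with a vector space and the rest is formal.
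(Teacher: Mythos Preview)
Your overall strategy matches the paper's: Corollary~\ref{cor-resol3} is stated as a direct combination of Corollary~\ref{cor-resol2} and Proposition~\ref{prop-min-ev}. The paper does not spell out why the minimal $B$-orbit of $G\cdot x$ coincides with the minimal $B_L$-orbit of $L\cdot x$; it relies implicitly on \cite[Lemma~6]{brion1}, used repeatedly elsewhere, which parametrizes the $B$-orbits in $G\times^P(L\cdot x)$ by pairs $(w,O')\in W^P\times(B_L\backslash L\cdot x)$ with dimension $\ell(w)+\dim O'$. The unique orbit of smallest dimension then corresponds to $w=e$ and the minimal $B_L$-orbit, and hence sits in the fibre $L\cdot x\subset\g(2)$.

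Your projection argument, however, contains a genuine error. You claim that ``$P_\alpha$ would raise $O$, contradicting the minimality of $O$'', but this is not a contradiction: the minimal orbit in the weak order is precisely the one that \emph{is} raised to all the others (cf.\ Lemma~\ref{lemma-resol}, where the sequence $P_1,\dots,P_m$ raises $Y_0$ up to $Y$). Minimality of $O$ means that no orbit is raised \emph{to} $O$, not that $O$ cannot be raised. Your hypothesis ``some $P_\alpha$ raises $\pi(O)$'' does not even distinguish $\pi(O)=\{[P]\}$ from $\pi(O)\neq\{[P]\}$, since the point $[P]$ is itself raised by every minimal parabolic not contained in $P$. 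The conclusion you want is correct, but the contradiction must be obtained differently: for instance, if $O$ corresponds to $(w,O_L)$ with $w\neq e$ under Brion's parametrization, then the orbit corresponding to $(e,O_L)$ has dimension $\dim O-\ell(w)<\dim O$, contradicting that $O$ has minimal dimension. Simply invoking \cite[Lemma~6]{brion1}, as the paper implicitly does, is the cleanest route.
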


\begin{remark}
\label{rem-proof}
Note that, since Proposition \ref{prop-min-ev} is true in any case, the above proof works as soon as the spherical conjugacy class is of minimal rank or even with the weaker assumption that the spherical conjugacy class has no type $N$ map (see Proposition \ref{prop-UNT}). We shall use this remark to get more precise results in type $B$ in the next section.
\end{remark}


\section{Singularities outside $G$-orbits}

In this section we prove our first results on the singularities of $B$-orbit closures. 

\subsection{Singularities of nilpotent orbits of height $2$}

In this subsection, we summarise existing regularity results for nilpotent orbits of height $2$. 

\begin{prop}
\label{prop-norm-orb}
Assume $\textrm{char}(\kk) = 0$. Let $G$ be reductive and $x \in \g$ with $\haut(x) = 2$. Then the closure of $G \cdot x$ in $\g$ is normal and has a rational resolution.
\end{prop}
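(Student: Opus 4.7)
The plan is to reduce to the simple case, construct an explicit resolution directly from the parabolic induction picture of Section \ref{induction}, and then invoke classical characteristic-zero results for normality and rational singularities.

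First, by Corollary \ref{cor-red}, $\overline{G \cdot x}$ is a product of nilpotent orbit closures in the simple factors of $G$, and both normality and the existence of a rational resolution are preserved under such products, so I may assume $G$ is simple. With $G$ simple, using an $\mathfrak{sl}_2$-triple for $x$ and the parabolic $P$ with $\Lie P = \g(0) \oplus \g(1) \oplus \g(2)$, Section \ref{induction} already records that $\overline{G \cdot x} = G \cdot \g(2)$ and that the collapsing morphism
$$\pi \colon G \times^P \g(2) \to \overline{G \cdot x}, \qquad [g,y] \mapsto g \cdot y,$$
is projective and birational. Its source is smooth, being a $G$-equivariant vector bundle over the smooth projective variety $G/P$, so $\pi$ is already a resolution of singularities; the only remaining content is to show normality of the target and the vanishing of the higher direct images.

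For normality, since $\haut(x) = 2 \leq 3$, the orbit $G \cdot x$ is spherical by the theorem cited in the introduction, and in characteristic zero the closure of a spherical nilpotent orbit is known to be normal (this is Panyushev's theorem, ultimately exploiting the parabolic induction structure used above to reduce to a statement about the prehomogeneous Jordan-algebra fibre $\g(2)$). For the rational resolution property, I would appeal to Hinich's theorem that in characteristic zero any normal nilpotent orbit closure has rational singularities. Combining these two inputs, $\overline{G \cdot x}$ is normal with rational singularities, and then $\pi$ is a rational resolution: normality of the target yields $\pi_* \cO = \cO_{\overline{G \cdot x}}$, while rational singularities of the target together with smoothness of the source force $R^i \pi_* \cO = 0$ for $i > 0$, this being one of the standard characterisations of rational singularities (e.g.\ via Elkik).

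The main obstacle is not a new geometric step but the correct invocation of the two classical inputs: the parabolic induction picture delivers the resolution essentially for free, but Panyushev's normality theorem and Hinich's rational singularities theorem do the real work. A self-contained argument avoiding the latter would require a direct cohomological vanishing $R^i \pi_* \cO_{G \times^P \g(2)} = 0$ on the vector bundle side, where one could try to exploit that $\g(2)$ is the abelian unipotent radical attached to the cominuscule-type pair $(G_E, P_E)$ tabulated in Section \ref{L-orbit}; this is, however, less immediate than the route through the two cited theorems.
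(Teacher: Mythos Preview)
Your proof has a genuine gap in the normality step. You assert that ``in characteristic zero the closure of a spherical nilpotent orbit is known to be normal (this is Panyushev's theorem)'', but this is false: the paper itself records, in the remark on $G_2$, that the $8$-dimensional orbit $\mathcal{O}_8 \subset \mathfrak{g}_2$ has height $3$, hence is spherical, yet $\overline{\mathcal{O}_8}$ is not normal (Levasseur--Smith \cite{levsmi}). So no such general theorem exists, and your normality argument collapses as stated. Hinich's input, by contrast, is correctly invoked --- the \emph{normalisation} of any nilpotent orbit closure has rational singularities in characteristic zero --- so once normality is established by some other means, the remainder of your argument would go through.

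The paper's proof sidesteps this entirely by a single appeal to Hesselink \cite[Theorem, page 108]{hesselink}, which gives normality of the image and the rational-resolution property \emph{simultaneously} for the collapsing $\pi \colon G \times^P \g(2) \to \overline{G \cdot x}$, provided $\g(2)$ is a completely reducible $P$-module. The key observation is precisely the one you allude to in your final paragraph: because $\haut(x) = 2$, the unipotent radical $R_u(P)$ acts trivially on $\g(2)$, so $\g(2)$ is really a module for the reductive Levi $L$ and is therefore completely reducible in characteristic zero. Hesselink's theorem then yields $\pi_* \cO = \cO_{\overline{G \cdot x}}$ and $R^i \pi_* \cO = 0$ for $i>0$ directly, with no separate normality input required. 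In other words, the ``self-contained cohomological vanishing'' you describe as a harder alternative is exactly the paper's one-line proof, and it is both shorter and logically prior to the route through Panyushev and Hinich.
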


\begin{proof}
Follows directly from \cite[Theorem, page 108]{hesselink} since $\g(2)$ is a completely reducible $P$-representation (the unipotent part of $P$ acts trivially).
\end{proof}

\begin{cor}
Assume $\textrm{char}(\kk) = p > 0$. Let $G$ be reductive and $x \in \g$ with $\haut(x) = 2$. Then the closure of $G \cdot x$ in $\g$ is normal and has a rational resolution for $p$ large enough.
\end{cor}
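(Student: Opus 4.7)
The plan is to descend the characteristic-zero statement of Proposition \ref{prop-norm-orb} to large positive characteristic via a standard spreading-out argument.

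First I would fix an integer $N$ divisible by the bad primes of $G$, so that all the relevant geometric data extend over $\Z[1/N]$: the split reductive group scheme $G$ and its Lie algebra $\g$, a representative $x$ of height $2$, its $\mathfrak{sl}_2$-triple (which lifts by \cite{SpringerSteinberg} once $N$ is divisible by the bad primes), the parabolic $P$ with Levi $L$, the grading $\g = \bigoplus_i \g(i)$ induced by the semisimple element of the triple, and the proper birational morphism
$$f : \mathcal{X} := G \times^P \g(2) \longrightarrow \mathcal{Y} := \overline{G \cdot x} \subset \g.$$
The source $\mathcal{X}$ is smooth over $\Spec \Z[1/N]$ as a vector bundle over $G/P$, and after enlarging $N$ if necessary we may assume $\mathcal{Y}$ flat over $\Spec \Z[1/N]$, as constancy of fibre dimension is an open condition on the base.

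Second, I would combine Proposition \ref{prop-norm-orb} with openness and constructibility of geometric properties in flat families. Normality of fibres is open in the base (EGA IV.12.2.4), so the locus of primes for which $\mathcal{Y}_{(p)}$ is geometrically normal is open in $\Spec \Z[1/N]$; it contains the generic point by Proposition \ref{prop-norm-orb}, hence omits only finitely many primes. For each $i > 0$ the coherent sheaf $R^i f_* \cO_{\mathcal{X}}$ on $\mathcal{Y}$ vanishes on the generic fibre; its support has constructible image in $\Spec \Z[1/N]$ (Chevalley) avoiding the generic point, and so again omits only finitely many primes. Cohomology and base change, applicable because $f$ is proper with flat source, then translates this into the fibrewise vanishing $R^i (f_{(p)})_* \cO_{\mathcal{X}_{(p)}} = 0$ for almost all $p$. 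The remaining equality $(f_{(p)})_* \cO_{\mathcal{X}_{(p)}} = \cO_{\mathcal{Y}_{(p)}}$ is automatic from properness, birationality of $f_{(p)}$ and normality of $\mathcal{Y}_{(p)}$, so for $p$ outside the exceptional set $f_{(p)}$ is a rational resolution.

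The main obstacle is not any single step, each being standard, but rather the bookkeeping of the spreading-out: one must verify that a single finite set of excluded primes suffices, which amounts to checking that the Jacobson--Morozov triple, the induced grading and the resolution $f$ can all be carried out over the same localisation $\Z[1/N]$. This is routine once $N$ is taken divisible by the bad primes of $G$ and by the denominators in a fixed $\mathfrak{sl}_2$-triple; one does not obtain an explicit bound on $p$ this way, which is why the statement only claims the result for $p$ sufficiently large.
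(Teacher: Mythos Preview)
Your argument is correct and is precisely the standard spreading-out argument that the paper leaves implicit: the corollary is stated without proof, the intended justification being exactly the passage from Proposition~\ref{prop-norm-orb} in characteristic~$0$ to large positive characteristic that you have written out. The only point worth tightening is the cohomology-and-base-change step: to conclude $R^i(f_{(p)})_*\cO_{\mathcal{X}_{(p)}}=0$ from the vanishing of $R^if_*\cO_{\mathcal{X}}$ after localisation, one should argue by descending induction on~$i$ (the top direct image always commutes with base change, and vanishing of $R^{i+1}$ over the base then makes $R^i$ commute with base change), but this is routine and does not affect the validity of your approach.
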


There exists more precise results for classical groups.

\begin{prop}[Donkin \cite{donkin}, Mehta - van der Kallen \cite{mvdk}, Xiao - Shu \cite{xiaoshu}]
Assume $\textrm{char}(\kk) = p > 0$ and let $G$ be reductive of classical type (\emph{i.e.} $A$, $B$, $C$ or $D$). Assume furthermore $p \neq 2$ for $G$ of type different from $A$. 

Let $x \in \g$ with $\haut(x) = 2$. Then the closure of $G \cdot x$ in $\g$ is normal. In Type $A$ the map $G \times^P \g(2) \to \overline{G \cdot x}$ is a rational resolution.
\end{prop}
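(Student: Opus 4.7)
The plan is to exploit the projective birational morphism $\mu\colon G \times^P \g(2) \to \overline{G\cdot x}$ constructed in Section \ref{induction}. Its source is a $G$-homogeneous vector bundle over $G/P$, hence smooth, so $\mu$ is already a resolution of singularities. What remains is (a) normality of the image, and (b) in type $A$, the vanishing $R^i\mu_*\cO = 0$ for $i>0$ together with $\mu_*\cO = \cO_{\overline{G\cdot x}}$. By Corollary \ref{cor-red} I first reduce to $G$ simple, and then use Table~1 to pin down the explicit model of $\g(2)$ in each case.

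In type $A$, $G/P$ is a Grassmannian and $\g(2) \cong \Hom(V/V_k, V_k)$ is the completely reducible $L$-module of rectangular matrices. I would prove (a) and (b) simultaneously following Donkin's strategy: the symmetric algebra $S^\bullet \g(2)^\vee$ carries a good filtration as a $P$-module, which induces a good filtration on $H^0(G\times^P \g(2), \cO) = \mu_* \cO$; pushing forward step by step from $G/B$ and applying Kempf vanishing on $G/P$ kills all higher direct images and identifies $\mu_*\cO$ with the integral closure of the coordinate ring of the $2$-nilpotent determinantal variety. Cohen--Macaulayness of the latter (classical for matrix ranks) closes the argument.

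For types $B$, $C$, $D$ with $\chr(\kk) \neq 2$, Table~1 identifies $\g(2)$ with a non-degenerate quadratic vector space (cases~2 and 4), the Jordan algebra of symmetric matrices (case~3, type~$C$), or the space of antisymmetric matrices (case~5, type~$D$); in every case $\overline{G \cdot x}$ is the $G$-saturation of a rank stratum in this matrix model. I would invoke the Frobenius splitting of the ambient matrix space constructed by Mehta--van der Kallen for the symmetric case and extended by Xiao--Shu to the antisymmetric and quadratic settings. The crucial property is that the splitting can be chosen \emph{simultaneously compatible} with every rank subvariety, and compatibility with a Frobenius splitting is well known to force normality of the subvariety. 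Since $\mu$ is $P$-equivariant with smooth total space, the splitting lifts through the vector-bundle structure of $G \times^P \g(2)$ and transfers normality from the affine slice $\g(2)$ to $\overline{G \cdot x}$.

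The main obstacle is producing the simultaneously compatible Frobenius splitting outside type~$A$, particularly for antisymmetric matrices: the required section of $\omega^{-1}$ is a Pfaffian-type polynomial, and the hypothesis $\chr(\kk) \neq 2$ enters precisely to make this Pfaffian well-defined and to guarantee that its $(p-1)$-st power has the correct residues along every rank subvariety. A secondary technical issue is bookkeeping the compatibility through the bundle $G \times^P \g(2) \to G/P$, but once the splitting on $\g(2)$ is in hand this step is essentially formal by $P$-equivariance and the fact that $G/P$ is itself Frobenius split compatibly with its Schubert varieties.
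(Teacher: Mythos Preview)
The paper does not actually prove this proposition; it is stated with attribution to the three cited references and no proof environment follows. So there is no ``paper's own proof'' to compare against --- your task was effectively to sketch what those references do.

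That said, your sketch contains a genuine error. You write that compatibility with a Frobenius splitting ``is well known to force normality of the subvariety.'' This is false: a compatibly split closed subvariety is reduced and weakly normal, but need not be normal (the nodal curve inside a split $\mathbb{A}^2$ is a standard counterexample). The way Frobenius splitting is leveraged to obtain normality --- and the way the present paper uses it later, via \cite[Proposition 1.2.5]{BK} --- is to combine the splitting with a proper birational map having connected fibers, so that the push-forward of the structure sheaf equals the structure sheaf of the target. You never establish connected fibers for the rank strata in $\g(2)$, nor invoke any such argument, so the normality step for types $B$, $C$, $D$ is unsupported as written.

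There is also a misattribution: Mehta--van der Kallen \cite{mvdk} work in $\mathfrak{gl}_n$, not with symmetric matrices; the orthogonal and symplectic cases (symmetric and antisymmetric models in your Table~1 language) are handled by Xiao--Shu \cite{xiaoshu}, and their argument is not simply ``compatible splitting implies normal'' but a more delicate reduction. Your type $A$ outline via good filtrations and Kempf vanishing is closer to what Donkin \cite{donkin} actually does, though Cohen--Macaulayness of the target is part of what has to be proved rather than something you may assume.
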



\subsection{Singularities of $B$-orbit closures outside $G$-orbits}

\begin{thm}
\label{thm-origin}
Let $G$ be simply laced and $x \in \g$ be nilpotent with $\haut(x) = 2$. The $B$-orbit closure $Y$ of $x$ in $\g$ is normal with rational singularities outside the $G$-orbits in $Y$. 
\end{thm}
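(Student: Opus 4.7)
The plan is to use the proper birational resolution $f\colon \tilde Y \to Y$ provided by Corollary \ref{cor-resol3}, in which $\tilde Y = P_1 \times^B \cdots \times^B P_m \times^B Y_0$ is smooth (because $Y_0$ is a vector space by Proposition \ref{prop-min-ev}), and $f$ has connected fibers. Setting $Y^\circ := Y \cap (G \cdot x)$ for the complement in $Y$ of the smaller $G$-orbits, I would verify that the restriction $f^\circ \colon f^{-1}(Y^\circ) \to Y^\circ$ is a rational resolution. This gives simultaneously the normality of $Y^\circ$ (via $f^\circ_* \mathcal{O} = \mathcal{O}_{Y^\circ}$) and the rational singularities property (via $R^i f^\circ_* \mathcal{O} = 0$ for $i > 0$).

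The argument would proceed by induction on the length $m$ of the tower. The base $m = 0$ is immediate, since $\tilde Y = Y_0$ is smooth and $f$ is the identity. For the inductive step, factor $f = q \circ (\mathrm{id}_{P_1} \times^B f')$, where $f'\colon P_2 \times^B \cdots \times^B P_m \times^B Y_0 \to Y' := \overline{P_2 \cdots P_m \cdot Y_0}$ is the analogous shorter resolution and $q\colon P_1 \times^B Y' \to Y$ is the multiplication map. By the inductive hypothesis, $Y'$ has rational singularities on $Y' \cap (G \cdot x)$, and applying $P_1 \times^B -$ preserves this property, since the associated bundle construction is locally trivial over the smooth curve $P_1/B \simeq \mathbb P^1$, so that flat base change and K\"unneth yield the required cohomology vanishing upstairs. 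The remaining map $q$ is a type $U$ morphism in the sense of Proposition \ref{prop-UNT}, because the ambient $G$-orbit is of minimal rank for simply laced $G$ as established in Section \ref{induction}; in particular $q$ is birational with connected fibers.

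The main obstacle is to verify that $q$ induces a rational resolution on $Y^\circ$, that is $R^i q_* \mathcal{O} = 0$ for $i > 0$ on that open subset. This reduces to a fibre-by-fibre analysis: for each $y \in Y^\circ$, the set-theoretic fibre $q^{-1}(y) = \{\, pB \in P_1/B : p^{-1} \cdot y \in Y' \,\}$ is a closed subset of $\mathbb P^1$, hence either finite or all of $\mathbb P^1$, and in either case has vanishing higher cohomology of the structure sheaf. The delicate point is to ensure this persists scheme-theoretically (no jumping or non-reduced pathology in the fibres over deeper $B$-orbits of $Y^\circ$); once this is in place, Grauert's theorem on cohomology and base change yields $R^i q_* \mathcal{O} = 0$ on $Y^\circ$. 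Assembling the inductive pieces via a Leray spectral sequence, and invoking Grauert--Riemenschneider to control the dualising side (valid in characteristic zero and in large enough positive characteristic, exactly the hypothesis already required by Proposition \ref{prop-norm-orb} for the normality of $\overline{G \cdot x}$), completes the proof.
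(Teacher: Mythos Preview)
Your approach is in the spirit of Brion's proof of \cite[Theorem 2]{brion2}, which is exactly what the paper invokes in a single line: since $\overline{G\cdot x}$ is of minimal rank, $Y$ is a multiplicity-free $B$-subvariety, and Brion's theorem gives normality with rational singularities on the open $G$-orbit directly. You are essentially attempting to reprove that theorem by hand in this special case, which is a reasonable strategy, but the argument as written has a genuine gap at precisely the point you flag as ``delicate''.

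The problem is the step where you want $R^i q_*\mathcal O = 0$ on $Y^\circ$ from the fibrewise description. Knowing that each set-theoretic fibre of $q$ is a point or a $\mathbb P^1$ is not enough: $q$ is not flat in general (the fibre dimension jumps), so Grauert's theorem does not apply, and cohomology-and-base-change can fail badly without flatness. One must control the scheme-theoretic fibres and the variation across the $B$-orbit stratification of $Y^\circ$, and this is exactly the hard content of Brion's result. His argument does not proceed by a naive fibre analysis; it passes through Frobenius splitting in positive characteristic (compatibly splitting the tower) to force $q_*\mathcal O = \mathcal O$ and the higher vanishing, then lifts to characteristic zero. Your Leray/induction skeleton is correct, but without either that splitting input or an alternative mechanism to guarantee the vanishing of $R^i q_*\mathcal O$, the inductive step does not close. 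The invocation of Grauert--Riemenschneider at the end handles the $\omega$-side once the $\mathcal O$-side is known, but it does not help with the gap itself.
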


\begin{proof}
The closure of $G \cdot x$ is of minimal rank. Hence $Y$ is a multiplicity-free $B$-subvariety of $\overline{G \cdot x}$.  
The result now follows from a result of Brion \cite[Theorem 2]{brion2} (for rational resolutions, see \emph{loc. cit.}, Section 3, last remarks). 
\end{proof}

\begin{remark}
We expect the above $B$-orbit closures to be also normal in $G$-orbits but we were not able to prove this result in general. In the next two sections, we give a proof of this in type $A$ and for nilpotent element of rank $2$. Note however that the fact that we have a resolution with connected fibers implies that the orbit closure $\overline{B \cdot x}$ is homeomorphic to its normalisation: the normalisation map is finite thus proper and bijective.
\end{remark}


\section{Frobenius splitting}

In this section we prove Frobenius splitting results for $G$- and $B$-orbit closures. 

\subsection{Frobenius splitting}

In this subsection, we prove that if $x \in \g$ has height $2$ and if the closure of the nilpotent orbit $G \cdot x$ is normal, then it is Frobenius split. 

\begin{prop}
\label{prop-fs}
Let $\kk$ be of characteristic $p$ a good prime for $G$.  The variety $G \times^P \g(2)$ is $(p-1)D$-Frobenius split for $D$ an ample divisor. Furthermore the splitting compatibly splits the $G$-orbits. 
\end{prop}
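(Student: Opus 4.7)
The plan is to realise $Z := G \times^P \g(2)$ as an open subvariety of a smooth projective $G$-variety $\overline Z$ that is a homogeneous bundle of flag varieties, and then to apply standard $B$-canonical Frobenius splitting techniques.

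For the compactification, in each of the six cases of Table 1 the $L$-module $\g(2)$ is classically the open affine cell of a cominuscule flag variety $X = G_E/P_E$: respectively a Grassmannian $\Gr(n,2n)$, an odd quadric, a Lagrangian Grassmannian $\LG(n,2n)$, an even quadric, a spinor variety $\OG(2n,4n)$, and the $27$-dimensional Freudenthal variety $E_7/P_7$. The $L$-action on $\g(2)$ extends to $X$ through $L \subset G_E$, and since $R_u(P)$ acts trivially on $\g(2)$ we let it act trivially on $X$ to obtain a $P$-action. Set $\overline Z := G \times^P X$; this is a smooth projective $G$-variety, an $X$-bundle over $G/P$, and $Z = \overline Z \smallsetminus \partial Z$ where $\partial Z := G \times^P (X \smallsetminus \g(2))$ is a $G$-stable Cartier divisor which is relatively ample over $G/P$ (since $X$ has Picard rank one).

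For the splitting itself, because $\overline Z$ is a homogeneous fibre bundle of cominuscule flag varieties it admits a $B$-canonical Frobenius splitting coming from a Bott-Samelson resolution. Concretely, reduced expressions for the longest elements of $W$ and of $W_{G_E}$ combine to produce a tower $\widetilde Z \to \overline Z$; pushing down Mathieu's canonical splitting of $\widetilde Z$ yields a splitting of $\overline Z$ compatibly splitting every subvariety of the form $G \times^P Y$ with $Y$ a Schubert or opposite-Schubert subvariety of $X$, and in particular $\partial Z$. Adding to $\partial Z$ the pullback of an ample divisor from $G/P$ gives an ample divisor $D$ on $\overline Z$, and a suitable twist produces a $(p-1)D$-splitting; restriction to $Z$ then gives the splitting of the proposition.

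For compatibility with the $G$-orbits, a $G$-orbit closure in $Z$ has the form $G \times^P \overline{L \cdot y}$ for some $y \in \g(2)$, so it suffices to identify $\overline{L \cdot y}$ as the trace on $\g(2)$ of a compatibly split subvariety of $X$. This is the classical coincidence between the Jordan-theoretic rank stratification of the simple Jordan algebra $\g(2)$ and the $B_{G_E}^-$-orbit decomposition of $X$ restricted to the big cell; this identification is the main obstacle, and must be verified case-by-case through Table 1 (or via the uniform Jordan-algebraic description of rank). Once it is in place, the construction of the $(p-1)D$-splitting is essentially routine, since the Bott-Samelson formalism and Mathieu's $B$-canonical splittings apply directly to the flag bundle $\overline Z$.
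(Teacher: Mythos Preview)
Your approach is essentially the same as the paper's: realise $\g(2)$ as the open cell of the cominuscule variety $G_E/P_E$, use the $B$-canonical splitting of that flag variety, and identify the $L$-orbit closures in $\g(2)$ with the traces of the opposite Schubert (equivalently $P_E^-$-orbit) stratification on the big cell. The only tactical difference is that you compactify the whole bundle to $\overline Z=G\times^P(G_E/P_E)$ and then restrict back, whereas the paper works directly on the affine side: it first restricts the $B_E$-canonical splitting of $G_E/P_E$ to the open cell $\g(2)$, then applies \cite[Theorem~4.1.17]{BK} to induce a $B$-canonical splitting of $G\times^B\g(2)$, and finally pushes down to $G\times^P\g(2)$; the ample divisor is simply the pullback of the sum of Schubert divisors from $G/P$. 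This avoids constructing a Bott--Samelson tower over $\overline Z$ and makes the ample divisor more transparent (no boundary component is needed since the fibre is already affine). Conversely, your projective compactification makes the appeal to \cite[Theorem~1.4.10]{BK} immediate. One small imprecision: the $L$-orbit closures in $\g(2)$ match the intersections with $P_E^-$-orbits (a specific subset of opposite Schubert varieties), not arbitrary $B_{G_E}^-$-orbit closures; the paper cites this coincidence explicitly.
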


\begin{proof}
Recall the construction of the group $G_E$ as well as the parabolic $P_E$ of $G_E$. Recall also that $L$ is the Levi subgroup of $P$ and $P_E$ containing $T$ the maximal torus. Let $B_E = B \cap G_E$ (recall that $B$ is a Borel subgroup of $G$ contained in $P$ and containing $T$). This is a Borel subgroup of $G_E$. Let $B_L = B \cap L$, this is a Borel subgroup of $L$.

Since $p$ is a good prime for $G$, it is also a good prime for $G_E$. In particular, we have a $L$-equivariant isomorphism $\g(2) \simeq R_u(P_E)$. Now we have a $L$-equivariant isomorphism $R_u(P_E) \simeq P_E w_0^E P_E/P_E$ where $L$ acts on $R_u(P_E)$ by conjugation and $w_0^E$ is the longest element in the Weyl group of $G_E$. Since $G_E/P_E$ is $B_E$-canonically Frobenius split (\cite[Theorem 4.1.15]{BK}) compatibly splitting the $P_E^-$-orbits (here $P_E^-$ is the parabolic subgroup opposite to $P_E$ with respect to $T$). Its open subset $P_E w_0^E P_E/P_E$ is therefore $B_E$-canonically split compatibly with its intersection with the $P_E^-$-orbits. It is therefore also $B_L$-canonically split (see \cite[Lemma 4.1.6]{BK}) compatibly with its intersection with the $P_E^-$-orbits. It follows that $R_u(P_E) \simeq \g(2)$ is $B_L$-canonically Frobenius split compatibly with its intersection with the $P_E^-$-orbits which exactly correspond to the $L$-orbits (this is a well known fact, the description of these intersections is for example given by the distance used in \cite{cmp1,cmp2,cmp3} see also \cite{piotr}).

Now since $x$ is of height $2$, the weights appearing in $P$ are bounded above by $2$ so $P$ acts on $\g(2)$ via its Levi factor $L$ which is also contained in $P_E$ so the action of $P$ on $\g(2)$ coincides with the action of $P_E$. This implies that the $B$-action on $\g(2)$ coincides with the $B_E$-action, therefore $\g(2)$ is also $B$-canonically Frobenius split compatibly with the $L$-orbits. 

By \cite[Theorem 4.1.17]{BK}, we obtain that $G \times^B \g(2)$ is $B$-canonically Frobenius split. Furthermore by \cite[Exercice 4.1.4]{BK}, this splitting compatibly splits all Schubert divisors $\overline{B w_0s_\a B} \times^B \g(2)$, with $w_0$ the longest element in the Weyl group of $G$ and $\a$ any simple root. This splitting also compatibly splits the $G$-orbits (the $G$-orbits are obtained by induction from the $L$-orbits in $\g(2)$).

But we have a natural projection $q : G \times^B \g(2) \to G \times^P \g(2)$ obtained from $G/B \to G/P$ by base extension. In particular, we have $q_* \cO_{G \times^B \g(2)} = \cO_{G \times^P \g(2)}$ and we get by \cite[Lemma 1.1.8]{BK} that $G \times^P \g(2)$ is Frobenius split compatibly splitting all Schubert divisors in $G \times^P \g(2)$ and compatibly splitting the $G$-orbits. In particular, if $D_0$ is the sum of all Schubert divisors in $G/P$, then its inverse image $D$ in $G \times^P \g(2)$ via the map $f : G \times^P \g(2) \to G/P$ induced by the first projection, is compatible Frobenius split. Since $D = f^*D_0$, since $D_0$ is ample on $G/P$ and since $f$ is a vector bundle, we get that $D$ is ample on $G \times^P \g(2)$. Now apply \cite[Theorem 1.4.10]{BK} to get the result.
\end{proof}

\begin{cor}
Let $G$ be reductive, $p$ be a good prime for $G$ and $x \in \g$ of height $2$. Assume that the closure of $G \cdot x$ is normal, then the closure of $G \cdot x$ is Frobenius split.
\end{cor}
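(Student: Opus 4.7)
The plan is straightforward: push the Frobenius splitting of $G \times^P \g(2)$ produced in Proposition \ref{prop-fs} down to $\overline{G \cdot x}$ along the canonical map
$$\pi \colon G \times^P \g(2) \to \overline{G \cdot x}, \quad [g,v] \mapsto g \cdot v,$$
using normality to guarantee that $\pi_{*}\cO_{G \times^P \g(2)} = \cO_{\overline{G \cdot x}}$.

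I would begin with a one-line reduction to the case where $G$ is simple. By Corollary \ref{cor-red}, the orbit closure $\overline{G \cdot x}$ decomposes as a product $\prod_i \overline{G_i \cdot x_i}$ indexed by the simple factors of $G$, and such a product is normal if and only if each factor is normal. Since a tensor product of Frobenius splittings is a Frobenius splitting of the product, it suffices to treat each simple factor separately.

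Next I would verify the two properties of $\pi$ needed to push down the splitting. Birationality was already recorded in Section \ref{induction}: since $L \cdot x$ is dense in $\g(2)$, the orbit $G \cdot x$ is isomorphic to $G \times^P (L \cdot x)$ and $\pi$ restricts to this isomorphism on a dense open subset. Properness is the standard collapsing argument: $G \times^P \g(2)$ is the total space of a $G$-equivariant vector bundle over the projective base $G/P$, and its evaluation map into $\g$ is proper onto the closed image $\overline{G \cdot x}$. Since $\overline{G \cdot x}$ is assumed normal, Zariski's main theorem (in its form for proper birational maps to normal targets) gives $\pi_{*}\cO_{G \times^P \g(2)} = \cO_{\overline{G \cdot x}}$.

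With these ingredients in hand, the conclusion follows immediately: combining the Frobenius splitting of $G \times^P \g(2)$ provided by Proposition \ref{prop-fs} with the identity $\pi_{*}\cO_{G \times^P \g(2)} = \cO_{\overline{G \cdot x}}$ and \cite[Lemma 1.1.8]{BK} yields a Frobenius splitting of $\overline{G \cdot x}$. There is no substantive obstacle here: the only nontrivial input is the normality hypothesis, which is exactly why it appears in the statement, and the rest of the argument is a routine descent of splittings along a proper birational morphism.
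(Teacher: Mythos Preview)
Your proof is correct and follows essentially the same approach as the paper's: push down the Frobenius splitting of $G \times^P \g(2)$ from Proposition~\ref{prop-fs} via the identity $\pi_{*}\cO_{G \times^P \g(2)} = \cO_{\overline{G \cdot x}}$ (obtained from normality together with properness and birationality of $\pi$) using \cite[Lemma 1.1.8]{BK}. The paper's argument is simply the terse version of yours, omitting the explicit reduction to simple factors and the justification of properness, but the core idea is identical.
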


\begin{proof}
Consider the birational map $\pi : G \times^P \g(2) \to \overline{G \cdot x}$. By normality we have $\pi_* \cO_{G \times^P \g(2)} = \cO_{\overline{G \cdot x}}$. The result follows from \cite[Lemma 1.1.8]{BK}.
\end{proof}

\subsection{Compatibly split $B$-orbits}

For $G$ of type $A$ or for $x$ of rank at most $2$, we prove that the above splitting compatibly splits the $B$-orbit closures. We start with type $A$. In this case $L = {\rm GL}_n(\kk)\times {\rm GL}_n(\kk)$. Let $B_L$ be a Borel subgroup of $L$. The following result is proved in \cite[Proposition 7.1]{HT}.

\begin{prop}
The 
${\rm GL}_n(\kk)$-embedding $M_n(\kk)$ has a $B_L$-canonical splitting compatibly splitting all the $B_L$-orbit closures. 
\end{prop}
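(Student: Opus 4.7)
The plan is to realise $M_n(\kk)$ as the big open cell in the Grassmannian $\Gr(n,2n)$, match matrix Schubert varieties with restrictions of Richardson varieties, and pull back a canonical Frobenius splitting. To set the stage, with $L = \GL_n(\kk) \times \GL_n(\kk)$ acting on $M_n(\kk)$ by $(g,h)\cdot A = gAh^{-1}$ and $B_L = B_1\times B_2$ a Borel of $L$, a standard Bruhat-type decomposition shows that the $B_L$-orbits are indexed by partial permutation matrices, with orbit closures the classical matrix Schubert varieties $\overline{X}_w$ defined by rank conditions on northwest submatrices (Fulton).

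I would then invoke the embedding $M_n(\kk) \hookrightarrow \Gr(n,2n) = G_E/P_E$, with $G_E = \GL_{2n}(\kk)$ and $P_E$ the maximal parabolic of type $n$, sending $A$ to $\mathrm{rowspan}[I_n \mid A]$; this identifies $M_n(\kk)$ with the big cell $R_u(P_E^-)\cdot P_E/P_E$. The Levi $L$ of $P_E$ acts on this open cell by conjugation, reproducing the left-right action on $M_n(\kk)$, and $B_L$ coincides with $B_E \cap L$ for a Borel $B_E \subset P_E$ of $G_E$. By a classical result (Fulton, Knutson--Miller), each matrix Schubert variety $\overline{X}_w$ is the restriction to the big cell of a Richardson variety $X_u \cap X^v$ in $\Gr(n,2n)$ for an explicit pair $(u,v) \in S_{2n}^2$ depending on $w$.

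With these identifications in hand, I would apply Frobenius splitting theory. The Grassmannian $\Gr(n,2n)$ carries a canonical splitting coming from the Mathieu--Haboush splitting of $G_E/B_E$ pushed down to $G_E/P_E$: it is $B_E$-canonical and compatibly splits every Schubert and every opposite Schubert variety, hence every Richardson variety (see \cite[Chapter 4]{BK}). Restricting this splitting to the big cell $M_n(\kk)$ produces a Frobenius splitting of $M_n(\kk)$ compatible with every matrix Schubert variety; its $B_L$-canonicity is inherited from the $B_E$-canonicity on $\Gr(n,2n)$ via the inclusion $B_L = B_E \cap L \subset B_E$.

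The main obstacle is the identification in the second step: explicitly matching each partial permutation $w$ with a pair $(u,v) \in S_{2n}^2$ so that $\overline{X}_w$ equals the big-cell restriction of $X_u \cap X^v$, and verifying that the resulting list of Richardson varieties exhausts all $B_L$-orbit closures. This combinatorial bookkeeping (Fulton, Knutson--Miller) is the genuine content of the proposition; the splitting step itself is then a direct application of results from \cite{BK}.
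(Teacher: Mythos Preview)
The paper does not actually prove this proposition; it simply cites He--Thomsen \cite[Proposition 7.1]{HT}, who construct $B\times B$-canonical splittings on equivariant embeddings of a reductive group (here $\GL_n\hookrightarrow M_n(\kk)$). Your Grassmannian route is a genuinely different idea, but it contains a real gap.

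The claimed identification of the $B_L$-orbit closures with big-cell restrictions of Richardson varieties in $\Gr(n,2n)$ is false, and neither Fulton nor Knutson--Miller asserts it. Your big cell $\Omega=R_u(P_E^-)\cdot P_E/P_E$ is the open $B_E^-$-orbit; any opposite Schubert variety $X^v$ is $B_E^-$-stable and therefore meets $\Omega$ either in all of $\Omega$ or not at all. Consequently $(X_u\cap X^v)\cap\Omega$ collapses to $X_u\cap\Omega$, so the distinct nonempty restrictions are in bijection with Schubert varieties, of which there are $\binom{2n}{n}$. But the number of $B_L$-orbit closures in $M_n(\kk)$ is the number of partial permutation matrices, $\sum_{k=0}^{n}\binom{n}{k}^{2}k!$; already for $n=2$ this is $7$, while $\binom{4}{2}=6$. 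Switching to the open $B_E$-orbit $P_Ew_0P_E/P_E$ (which is what you would need anyway for $B_E$-canonicity, since your cell is only $B_E^-$-stable and $B_L\not\subset B_E^-$) gives the symmetric mismatch with the roles of $X_u$ and $X^v$ reversed. What Fulton actually provides (and what the paper's remark after the proposition uses) is a smooth surjection from a Schubert variety in the \emph{full} flag variety $\GL_{2n}/B_{2n}$ onto each matrix Schubert variety, which is a different statement and does not realise $M_n(\kk)$ as an open cell.

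If you want to rescue a flag-variety argument, you would have to work in $\GL_{2n}/B_{2n}$ and match the $B_L$-orbit closures with open pieces of \emph{projected} Richardson varieties in the sense of \cite{KLS}, where the relevant count is larger; alternatively, appeal directly to He--Thomsen as the paper does.
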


\begin{remark}
1. The above splitting is actually the splitting obtained from Proposition \ref{prop-fs}. We will not use this.

2. The compatibly split subvarieties for the above splittings are described in \cite{embeddings}.
The $B_L$-orbit closures in $M_n(\kk)$ are \textit{matrix Schubert varieties}. For such a variety $Z$, 
there exists a Schubert variety $Z'$ in $\textrm{GL}_{2n}(\kk)$ and a smooth and surjective morphism $Z' \to Z$ 
(see \cite{fulton}). In particular, $Z$ has rational singularities, as this holds for Schubert varieties. 
\end{remark}

Let us now consider rank $2$ nilpotent orbits. In this case $G_E = {\rm SO}_{2n+2}$. Let $T$ be a maximal torus. Let $(e_i)_{i \in [1,2n+2]}$ be the basis of $\kk^{2n+2}$ such that the quadratic form $q$ is associated to the bilinear form $b(e_i,e_j) = \delta_{i,2n+3-j}$. Write $(e_i^*)_{i \in [1,2n+2]}$ for the dual basis. Let $B_E$ be a Borel subgroup containing $T$ and $P_E$ be the maximal parabolic subgroup of $G_E$ containing $B_E$ and associated to the simple root $\alpha_1$ (with notation as in Bourbaki \cite{bourbaki}). The unipotent radical of $P_E$ is $V$ and is isomorphic to an open Schubert variety in $G_E/P_E$ (a smooth $2n$-dimensional quadric). Note that we have $V \simeq \kk^{2n}$ 
and the embeding in the quadric is given by $(x_2,\cdots,x_{2n+1}) \mapsto [1,x_2,\cdots,x_{2n+1},-(x_2x_{2n+1} + \cdots + x_n x_{n+1}))]$.

Let $\psi$ be the $B_E$ canonical splitting of $\kk^{2n}$ obtained from the restriction of the $B_E$-canonical splitting $\phi$ on $G_E/P_E$. Since $\phi$ splits the hypersurfaces given by the following equations: $e_1^*$, $e_{2n+2}^*$, and $e_1^*e_{2n+2}^* + \cdots + e_i^*e_{2n+3-i}^*$ for $i \in [2,n+1]$ (see \cite{KLS}). It follows, that $\psi$ splits the hypersurfaces in $\kk^{2n}$ defined by $e_{2n+2}^*$, and for $i \in [2,n+1]$ by the equation $e_1^*e_{2n+2}^* + \cdots + e_i^*e_{2n+3-i}^*$. 

Let $Y_0 \subset \kk^{2n}$ be the subspace defined by $e_{n+1}^* = \cdots = e_{2n+2}^* = 0$. Let $L$ be the Levi subgroup of $P_E$ containing $T$ and $B_L = B_E \cap L$. 

\begin{prop}
The subspace $Y_0$ is $B_L$-stable and compatibly split. 
\end{prop}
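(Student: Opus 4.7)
The two claims are handled separately: $B_L$-stability follows from identifying $Y_0$ with a step of the $B_L$-stable isotropic flag, while the compatible splitting is built by iteratively extracting linear components from the $\psi$-split quadric hypersurfaces. For stability, note that the Levi $L$ of $P_E$ is $\GL_1 \times \SO_{2n}$, and $V = R_u(P_E) \simeq \kk^{2n}$ identifies with $\Span(e_2, \ldots, e_{2n+1})$ carrying (up to a character of the $\GL_1$) the standard $\SO_{2n}$-representation, on which $b$ induces the form $b(e_i,e_j) = \delta_{i,2n+3-j}$. The Borel $B_L = B_E \cap L$ preserves the maximal isotropic flag
$$\Span(e_2) \subset \Span(e_2, e_3) \subset \cdots \subset \Span(e_2, \ldots, e_{n+1}),$$
and the equations $e_{n+1}^* = \cdots = e_{2n+1}^* = 0$ cut out $\Span(e_2, \ldots, e_n)$ on $V$. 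The remaining defining equation $e_{2n+2}^* = 0$ is automatic there because $e_{2n+2}^* = -\sum_{j=2}^{n+1} e_j^* e_{2n+3-j}^*$ on $V$ vanishes on any isotropic subspace. Thus $Y_0 = \Span(e_2, \ldots, e_n)$ is the $(n-1)$-th term of the $B_L$-stable flag, settling the stability assertion.

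For the compatible splitting, write $H_i = \{e_i^* = 0\} \cap V$ for $i \in [2, 2n+1]$ and set $Z_k = H_{n+1} \cap \cdots \cap H_{n+k}$ for $k \in [1, n+1]$, so that $Z_{n+1} = Y_0$. The plan is to show by induction on $k$ that each $Z_k$ is $\psi$-compatibly split, using the two standard facts that intersections of compatibly split subvarieties are compatibly split and that the irreducible components of a compatibly split (hence reduced) subscheme are compatibly split. The base case comes from the $i = n$ split hypersurface, which restricts on $V$ to $\{e_{n+1}^* e_{n+2}^* = 0\} = H_{n+1} \cup H_{n+2}$; thus $H_{n+1}$, $H_{n+2}$, and their intersection $Z_2$ are compatibly split.

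For the inductive step with $2 \leq k \leq n-1$, restrict the $i = n+1-k$ split hypersurface
$$\left\{\sum_{j=n+2-k}^{n+1} e_j^* e_{2n+3-j}^* = 0\right\}$$
to $Z_k$. Every summand vanishes on $Z_k$ except the extremal pair $e_{n+2-k}^* e_{n+k+1}^*$, because for each other $j \in [n+3-k, n+1]$ either $j$ or $2n+3-j$ lies in $[n+1, n+k]$. Hence the restricted hypersurface equals $(Z_k \cap H_{n+2-k}) \cup Z_{k+1}$, and $Z_{k+1}$ is compatibly split as one of its components. Iterating reaches $Z_n$. To pass from $Z_n$ to $Z_{n+1} = Y_0$, restrict the split hyperplane $\{e_{2n+2}^* = 0\}$ to $Z_n$: using $e_{2n+2}^* = -\sum_{j=2}^{n+1} e_j^* e_{2n+3-j}^*$ and the vanishing of $e_{n+1}^*, \ldots, e_{2n}^*$ on $Z_n$, the restriction collapses to $-e_2^* e_{2n+1}^*$, giving $\{e_{2n+2}^* = 0\} \cap Z_n = (Z_n \cap H_2) \cup Y_0$ and hence the desired compatible splitting of $Y_0$.

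The main obstacle is the index bookkeeping in the inductive step: one must check at each stage that the restricted quadric decomposes as exactly two distinct linear hyperplanes, i.e.\ that neither factor $e_{n+2-k}^*$ nor $e_{n+k+1}^*$ is already killed by the defining equations of $Z_k$ and that no other summand survives. With the symmetric indexing this is transparent, but it is what forces the induction to step by exactly one hyperplane on each side per round.
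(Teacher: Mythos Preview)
Your proof is correct and follows the same strategy as the paper: obtain $Y_0$ by iterated intersections of the $\psi$-split quadric hypersurfaces and extraction of irreducible (linear) components. The only difference is the order of operations. The paper starts at the outside, intersecting $\{e_{2n+2}^*=0\}$ with the $i=2$ quadric to peel off $e_{2n+1}^*$ first, then proceeds inward with increasing $i$; you start at the middle with the $i=n$ quadric, whose restriction to $V$ factors as $e_{n+1}^* e_{n+2}^*$, immediately producing the linear space $Z_2$, and then move outward with decreasing $i$, finishing with $\{e_{2n+2}^*=0\}$ to strip off $e_{2n+1}^*$. Your ordering has the mild advantage that every intermediate $Z_k$ is already a linear subspace, so the ``only one monomial survives'' verification is transparent at each step, whereas the paper's sketch carries along a residual quadric condition. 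But the underlying idea and the ingredients used are identical.
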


\begin{proof}
Indeed, the $B_L$-stability is clear. We check that $Y_0$ can be obtained via taking irreducible components and intersections of the above hypersurfaces. This will prove that it is compatibly split. Consider the intersection of the locus where $e_{2n+2}^*$ and $e_1^*e_{2n+2}^* + e_2^*e_{2n+1}^*$ vanish. It contains as an irreducible component the locus where $e_{2n+2}^*$ and $e_{2n+1}^*$ vanish. By induction the result follows.
\end{proof}

\begin{cor}
\label{cor-FS}
Let $x \in \g$ be nilpotent of height $2$. Assume that $G$ is of type $A$ or that $x$ has rank at most $2$. Then $\overline{G \cdot x}$ is $B$-canonically Frobenius split compatibly with all $B$-orbits.
\end{cor}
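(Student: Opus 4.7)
The plan is to descend the $B_L$-canonical splitting $\psi$ of $\g(2)$ (type $A$) or of $\kk^{2n}$ (rank $2$), given by the two preceding propositions, to a $B$-canonical splitting of $\overline{G \cdot x}$ that will compatibly split every $B$-orbit closure. The architecture of the argument is identical to that of Proposition \ref{prop-fs}. First I would apply \cite[Theorem 4.1.17]{BK} to $\psi$ to produce a $B$-canonical splitting $\Psi$ on $G \times^B \g(2)$. By \cite[Exercise 4.1.4]{BK} this $\Psi$ will compatibly split every Schubert divisor $\overline{Bw_0 s_\alpha B} \times^B \g(2)$, and by the functoriality of the extension it will compatibly split $G \times^B Z$ for every $B_L$-stable, compatibly split subvariety $Z \subset \g(2)$.

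Next, I would push forward along the smooth $P/B$-bundle $q: G \times^B \g(2) \to G \times^P \g(2)$, for which $q_*\cO = \cO$ (the fibers being connected and rational), and then along the birational proper map $\pi: G \times^P \g(2) \to \overline{G \cdot x}$, for which $\pi_*\cO = \cO$ holds by normality of $\overline{G \cdot x}$ (our hypothesis on the characteristic). By \cite[Lemma 1.1.8]{BK} this produces a $B$-canonical splitting of $\overline{G \cdot x}$, compatibly splitting the images of all the above Schubert divisors and of all $G \times^B Z$; intersections and irreducible components of these then provide a rich supply of compatibly split $B$-stable closed subvarieties.

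The main obstacle will be to identify every $B$-orbit closure in $\overline{G \cdot x}$ inside this supply. By the spherical structure of $\overline{G \cdot x}$, each $B$-orbit is determined by a pair consisting of a Schubert cell of $G/P$ and a $B_L$-orbit on the fiber $\g(2)$, so its closure is the image under $\pi$ of an irreducible component of the intersection of the pullback of the corresponding Schubert variety in $G/P$ with $G \times^B \overline{Z}$, where $Z$ is the relevant $B_L$-orbit. In type $A$, \cite[Proposition 7.1]{HT} provides compatibility with every $B_L$-orbit closure of $M_n(\kk)$, so the identification is immediate. In the rank-$2$ case, I would use the explicit equations listed in the preceding proof, together with Corollary \ref{cor-resol3} and the description of minimal $B$-orbit closures as vector spaces from Proposition \ref{prop-min-ev}, to verify that every required $B_L$-orbit closure in $\kk^{2n}$ is obtained from the listed hypersurfaces by taking intersections and irreducible components, and hence is compatibly split.
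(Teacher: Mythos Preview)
Your argument is correct and shares the same architecture (\cite[Theorem 4.1.17]{BK}, push forward along $q$ and $\pi$) as the paper, but it does more work than the paper's three-line proof. The paper observes that, because all $G$-orbits in $\overline{G\cdot x}$ are of minimal rank, \emph{every} $B$-orbit closure in $\overline{G\cdot x}$ is of the form $\overline{P_{i_1}\cdots P_{i_m}\cdot Y_0}$ for the single minimal $B$-orbit closure $Y_0$ (Corollary~\ref{cor-resol2}). Hence it suffices to know that $Y_0$ is $B$-canonically split; the extension of \cite[Theorem 4.1.17]{BK} to the full tower $P_{i_1}\times^B\cdots\times^B P_{i_M}\times^B Y_0 \twoheadrightarrow \overline{G\cdot x}$ then compatibly splits all sub-towers, whose images are exactly the $B$-orbit closures. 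In your route, by contrast, you parametrize $B$-orbits by pairs (Schubert cell, $B_L$-orbit in $\g(2)$), which forces you to check that \emph{every} $B_L$-orbit closure in $\g(2)$ is compatibly split---immediate in type $A$ via \cite{HT}, but an extra verification in the rank-$2$ case that the paper avoids (it only checks $Y_0$). Your approach has the virtue of being more uniform with the general Proposition~\ref{prop-fs}, and the rank-$2$ check you propose is indeed routine from the listed hypersurfaces; the paper's shortcut exploits the minimal-rank structure to bypass it entirely.
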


\begin{proof}
It follows since any $B$-orbit is obtained via parabolic induction from the closed $B$-orbit $Y_0$, since $Y_0$ is $B$-canonically split ($B$ acts via $B_L$) and by \cite[Theorem 4.17]{BK}. Note that for the case of rank $1$, $Y_0 = \g(2)$ is $B$ canonically split.
\end{proof}


\section{Singularities of $B$-orbit closures}

In this section we prove our results on the singularities of $B$-orbit closures of nilpotent orbits of height $2$ in type $A$ or of rank at most $2$.

\begin{thm}\label{thmRational}
Let $G$ be simple and simply laced, let $x \in \g$ nilpotent of height $2$. Assume that $G$ is of type $A$ or that $x$ is of rank $2$ and let $Y$ be a $B$-orbit closure.

Then $Y$ is normal. 
\end{thm}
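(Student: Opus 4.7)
The plan is to combine the explicit resolution from Corollary \ref{cor-resol3} with the Frobenius splitting of Corollary \ref{cor-FS} in the standard way used for Schubert varieties. First, Corollary \ref{cor-resol3} provides a birational projective morphism
\[
f \colon \widetilde{Y} := P_1 \times^B \cdots \times^B P_m \times^B Y_0 \longrightarrow Y,
\]
where the $P_i$ are minimal parabolics containing $B$ and, by Proposition \ref{prop-min-ev}, $Y_0$ is a vector subspace of $\g(2)$. Each factor $P_i \times^B (-)$ realises the total space as a $P_i/B \simeq \P^1$-bundle, so $\widetilde{Y}$ is an iterated $\P^1$-bundle over the affine space $Y_0$, in particular smooth. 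Moreover, $f$ has connected fibers, as follows from Lemma \ref{lemma-resol} together with the fact (established just before Corollary \ref{cor-resol2}) that every $G$-orbit in $\overline{G\cdot x}$ has minimal rank when $G$ is simply laced.

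Under either of the two hypotheses on $G$ and $x$, Corollary \ref{cor-FS} furnishes a $B$-canonical Frobenius splitting of $\overline{G\cdot x}$ compatibly splitting every $B$-orbit closure. I would lift this splitting to $\widetilde{Y}$ by iterating \cite[Theorem 4.1.17]{BK} along each factor $P_i \times^B$, producing a $B$-canonical splitting of $\widetilde{Y}$ compatible with every closed subvariety of the form $P_1 \times^B \cdots \times^B P_m \times^B Z$ for $Z \subset Y_0$ compatibly split. In this way each $f$-exceptional divisor, being the preimage of a proper $B$-orbit closure in $Y$, becomes a compatibly split subvariety of $\widetilde{Y}$.

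A standard Frobenius splitting argument along the lines of \cite[Chapter 3]{BK} then yields the pushforward identity $f_*\cO_{\widetilde{Y}} = \cO_Y$: the compatibly split exceptional divisors together with the $B$-canonicity guarantee that no extra sections of $\cO_{\widetilde{Y}}$ survive after pushforward. Combined with the smoothness (hence normality) of $\widetilde{Y}$, this forces normality of $Y$: by the universal property of the normalisation $\nu \colon Y^\nu \to Y$ the morphism $f$ factors through $\nu$, yielding
\[
\cO_Y \subseteq \nu_*\cO_{Y^\nu} \subseteq f_*\cO_{\widetilde{Y}} = \cO_Y,
\]
so every inclusion is an equality and $\nu$ is an isomorphism. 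The main obstacle I anticipate is the iterative lifting step: one has to track the $B$-orbit stratification of $\widetilde{Y}$ through the tower $P_1 \times^B \cdots \times^B P_m \times^B Y_0$ and verify that every $f$-exceptional divisor arises as a compatibly split subvariety under the iterated canonical lift, after which the pushforward identity follows from the standard machinery.
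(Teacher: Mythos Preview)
Your ingredients are exactly the paper's --- the resolution from Corollary \ref{cor-resol3} and the compatible Frobenius splitting from Corollary \ref{cor-FS} --- but you then take an unnecessary detour that leaves the key step unjustified. The paper's proof is a one-liner: $Y$ is Frobenius split (as a compatibly split subvariety of $\overline{G\cdot x}$) and admits a proper birational morphism from a smooth variety with connected fibers, so \cite[Proposition 1.2.5]{BK} applies directly. That proposition says a split scheme is weakly normal; since the resolution with connected fibers forces the normalisation $\nu\colon Y^\nu\to Y$ to be a finite birational \emph{bijection}, weak normality makes $\nu$ an isomorphism. No lifting of the splitting to $\widetilde Y$ is needed, and nothing about exceptional divisors enters.

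Your proposed route --- lift the splitting to $\widetilde Y$ via iterated \cite[Theorem 4.1.17]{BK}, compatibly split the exceptional divisors, and then deduce $f_*\cO_{\widetilde Y}=\cO_Y$ --- is where the gap lies. There is no ``standard Frobenius splitting argument'' that produces $f_*\cO_{\widetilde Y}=\cO_Y$ from a splitting on the \emph{source} together with compatibly split exceptional divisors; in \cite[Chapter 3]{BK} the pushforward identity for Bott--Samelson resolutions is obtained precisely by first proving normality of the target via weak normality, not the other way around. Your claim that every $f$-exceptional divisor is the preimage of a proper $B$-orbit closure is also not obviously true (the exceptional locus of $f$ need not be a union of such preimages), and even granting it, compatibility of those divisors with a splitting of $\widetilde Y$ does not by itself control $f_*\cO_{\widetilde Y}$. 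Drop the lift entirely: the splitting of $Y$ and the connected fibers of $f$ are already enough, via weak normality.
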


\begin{proof} 
We first prove that $Y$ is normal. But $Y$ is Frobenius split (Corollary \ref{cor-FS}) and admits a resolution with connected fibers (Corollary \ref{cor-resol3}). The result follows from \cite[Proposition 1.2.5]{BK}.
%
\end{proof}

\begin{cor}
\label{main-coro}
Let $G$ be a reductive group with simply laced simple factors. Let $x \in \g$ be nilpotent of height $2$, and of rank at most $2$ in all simple factors of type different from $A$, then any $B$-orbit closure in $\overline{G \cdot x}$ is normal.
\end{cor}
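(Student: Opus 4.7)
The plan is to assemble the corollary from Theorem \ref{thmRational} by reducing to simple factors. The real content (the resolution with connected fibers and the $B$-canonical Frobenius splitting compatible with all $B$-orbits) has already been established, so this step is essentially a product decomposition argument.

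First, I would reduce to the simple case. Modding out the center (which lies in the kernel of the adjoint action), write $G = G_1 \times \cdots \times G_r$ as a product of simple simply laced groups, and correspondingly $B = B_1 \times \cdots \times B_r$ and $x = x_1 + \cdots + x_r$ with $x_i \in \g_i$. By Corollary \ref{cor-red}, every $B$-orbit closure $Y \subseteq \overline{G \cdot x}$ decomposes as a product $Y = Y_1 \times \cdots \times Y_r$, where each $Y_i$ is a $B_i$-orbit closure in $\overline{G_i \cdot x_i}$. Summing a choice of $\mathfrak{sl}_2$-triples per factor, one sees that each $x_i$ has height at most $2$; if $x_i = 0$ the factor $Y_i$ is a single point, and otherwise $x_i$ has height exactly $2$.

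Now I would apply Theorem \ref{thmRational} factor by factor. For each nontrivial $i$, the group $G_i$ is simple and simply laced, $x_i$ is nilpotent of height $2$, and either $G_i$ is of type $A$ or $x_i$ has rank at most $2$ by the hypothesis of the corollary. In the type $A$ case the first disjunct of Theorem \ref{thmRational} applies directly; in the remaining case the second disjunct applies, because the proof of Theorem \ref{thmRational} relies only on Corollary \ref{cor-FS} (Frobenius splitting, stated for rank at most $2$) and Corollary \ref{cor-resol3} (resolution, valid for any $B$-orbit closure in a height-$2$ nilpotent orbit closure), both of which cover rank $\leq 2$. Thus each $Y_i$ is normal, and since a product of normal varieties over an algebraically closed field is normal, so is $Y = Y_1 \times \cdots \times Y_r$. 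The main obstacle is essentially absent: beyond the product decomposition of $B$-orbit closures provided by Corollary \ref{cor-red}, no new geometric input is needed, and the conclusion follows from Theorem \ref{thmRational} applied componentwise.
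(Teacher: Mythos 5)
Your proof is correct and follows the same route as the paper's (which simply cites Corollary \ref{cor-red} and Theorem \ref{thmRational}); you have usefully filled in the details of the product decomposition and the observation that normality is preserved under products over an algebraically closed field. You also rightly flag that Theorem \ref{thmRational} is stated for rank exactly $2$ while the corollary allows rank at most $2$, and correctly resolve this by noting that both ingredients (Corollary \ref{cor-FS} and Corollary \ref{cor-resol3}) in fact cover the rank-$\leq 2$ range.
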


\begin{proof}
Follows from the above result and Corollary \ref{cor-red}.
\end{proof}

\begin{remark}
If $\overline{G \cdot x}$ admits a rational resolution, then by general results of M. Brion (see \cite[Section 3, last remarks]{brion2}) it follows that the resolution constructed in Corollary \ref{cor-resol3} is a rational resolution. This holds in particular for $G$ of type $A$.
\end{remark}

\subsection{On the minimal nilpotent orbit} Let $G$ be simple with Lie algebra $\g$. Pick a Cartan subalgebra
$\mathfrak{h}$, $\Phi \subseteq \mathfrak{h}^*$ the set of roots and a set of simple roots $\Delta \subseteq \Phi$.
The set $\Delta$ determines a partial ordering on $\Phi$, and a Borel subgroup $B \subseteq G$. Let $(H_\alpha , X_\rho \mid \alpha \in \Delta , \rho \in \Phi )$ be a Chevalley basis of $\g$. Denote by $\beta \in \Phi^+$ the highest root. Then $\mathcal{O}_{\mathrm{min}} = G \cdot X_\beta$ is the minimal non-trivial nilpotent orbit of $G$.

\medskip

\noindent Denote by $P \supseteq B$ the parabolic group corresponding to $\Delta(\beta) = \{\alpha \in \Delta \mid (\alpha , \beta) =0 \}$, by $\Phi_{\mathrm{lg}} \subseteq \Phi$ the set of long roots, and by $W$ the Weyl group of $G$ which is generated by the simple reflections $s_{\alpha}, \alpha \in \Delta$.  

\begin{lemma}\label{lemmaMinOrbit} The following holds.

\begin{enumerate}
\item Any $B$-orbit closure in $\overline{\mathcal{O}_{\mathrm{min}}}$ admits a rational resolution of singularities.
\item The set of $B$-orbits of $\mathcal{O}_{\mathrm{min}}$ is $\left\{ \mathcal{O}_{\rho} := B \cdot X_{\rho} \mid \rho \in \Phi_{\mathrm{lg}} \right\}$.
\item $\mathcal{O}_{\rho} \subseteq \overline{\mathcal{O}_{\rho'}}$ if and only if $\rho' \prec \rho$.
\end{enumerate}

\end{lemma}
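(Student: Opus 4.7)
The plan is to realise $\mathcal{O}_{\mathrm{min}}$ as a $\G_m$-bundle over the adjoint variety $X := G/P$ and to deduce all three assertions from the classical Bruhat decomposition on $X$. Since $X_\beta$ is a highest weight vector of the adjoint representation, its $G$-stabiliser is exactly $P$, so the map $g \cdot X_\beta \mapsto g \cdot [X_\beta]$ identifies $\mathcal{O}_{\mathrm{min}}$ with the restriction to $X$ of the tautological $\G_m$-bundle $\g \setminus \{0\} \to \mathbb{P}(\g)$. In particular $\overline{\mathcal{O}_{\mathrm{min}}}$ is the affine cone over $X$ under this projective embedding.

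For parts (2) and (3) I would use the Bruhat decomposition. The $B$-orbits on $X$ are the Schubert cells $BwP/P$ for $w$ in $W^P$, the minimal-length representatives of $W/W_P$. Because $\beta$ is the highest root, a standard dominance argument shows that the full $W$-stabiliser of $\beta$ is generated by the $s_\alpha$ with $\alpha \in \Delta(\beta)$, namely $W_P$; hence $w \mapsto w\beta$ is a bijection $W^P \to W \cdot \beta = \Phi_{\mathrm{lg}}$, the last equality because $W$ acts transitively on roots of each length. Each cell is then $B \cdot [X_{w\beta}]$. The fibre of $\mathcal{O}_{\mathrm{min}} \to X$ over $[X_\gamma]$ is $\kk^\times X_\gamma$, on which $T \subset B$ acts transitively through the non-trivial character $\gamma$, so the preimage of each Schubert cell is a single $B$-orbit, equal to $\mathcal{O}_{w\beta}$. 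This gives (2). For (3), the Bruhat order on $W^P$ translates under $w \leftrightarrow w\beta$ into the reverse of the root order on $\Phi_{\mathrm{lg}}$, namely $w \le w'$ iff $w'\beta \preccurlyeq w\beta$; this is the classical matching of Bruhat order with weight order for the (quasi-)minuscule fundamental weight $\beta$ of the (co)adjoint representation. Taking preimages under the $\G_m$-bundle preserves the closure relations, yielding (3).

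For part (1), set $\rho = w\beta$ and $X_w := \overline{BwP/P} \subset X$, so that $\overline{\mathcal{O}_\rho}$ is the affine cone over $X_w$ in $\g$. The blow-up of $\overline{\mathcal{O}_\rho}$ at the origin is the total space of $\mathcal{O}_{\mathbb{P}(\g)}(-1)|_{X_w}$, and it contracts the zero section to $0$. Pulling this line bundle back along a Bott--Samelson resolution $\mu : \widetilde{X}_w \to X_w$ produces a smooth variety $\widetilde{\mathcal{L}}$ together with a projective birational map $\widetilde{\mathcal{L}} \to \overline{\mathcal{O}_\rho}$. Rationality of this resolution then follows from the Leray spectral sequence: base change along the line bundle reduces the vanishing of $R^i \mu_* \mathcal{O}$ to the rationality of the Bott--Samelson, and the vanishing of the higher direct images under the contraction of the zero section reduces to the Kempf-type vanishing $H^i(X_w, \mathcal{O}(n)) = 0$ for $i > 0$ and $n \ge 0$, which is classical for Schubert varieties in the adjoint embedding.

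The main obstacle I anticipate is step (3), namely the precise matching of the Bruhat order on $W^P$ with the root order $\prec$ restricted to $\Phi_{\mathrm{lg}}$; this is classical but needs some care in the non-simply laced case, where Bruhat covers via short-root reflections interact with the long-root orbit $W \cdot \beta$. A secondary technical point is the Kempf-type vanishing on Schubert subvarieties of $X$ required in (1), which we would invoke from the literature on Frobenius-split flag varieties.
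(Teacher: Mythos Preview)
Your argument is correct and takes a genuinely different route from the paper's.

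The paper treats the lemma as the rank~$1$ instance of its general height-$2$ machinery: it observes that $\mathcal{O}_{\mathrm{min}}$ is obtained by parabolic induction from the one-dimensional $L$-variety $\g_\beta\setminus\{0\}$, hence is of minimal rank; part~(1) then follows from the paper's Frobenius splitting of $B$-orbit closures in rank~$\le 1$ (its Corollary~\ref{cor-FS}) together with the connected-fibre resolution and \cite[Proposition~1.2.5]{BK}, while (2)--(3) come from Brion's \cite[Lemma~6]{brion1} on parabolic induction plus Juteau's anti-isomorphism $W^P\to\Phi_{\mathrm{lg}}$.

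You instead exploit directly that $\overline{\mathcal{O}_{\mathrm{min}}}$ is the affine cone over the adjoint variety $G/P$. This yields (2) and (3) from the Bruhat decomposition on $G/P$ and the same order-matching that the paper imports from Juteau (which you rightly flag as the delicate point, especially in the non-simply laced case). For (1) you build the resolution by hand as the total space of $\mathcal{O}(-1)$ pulled back along a Bott--Samelson, reducing rationality to the classical Kempf-type vanishing on Schubert varieties rather than to the paper's bespoke splitting results.

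The trade-off: the paper's proof is internally economical, reusing results already established for all height-$2$ orbits, and makes clear that $\mathcal{O}_{\mathrm{min}}$ is not a special case but the simplest one. Your proof is more self-contained and avoids the paper's Frobenius-splitting Corollary~\ref{cor-FS}, at the cost of importing Kempf vanishing from the Schubert-variety literature; it also makes the resolution of each $\overline{\mathcal{O}_\rho}$ completely explicit. For the order-matching in (3), both approaches ultimately rely on the same fact, and citing \cite{juteau} as the paper does would close the gap you identified.
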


\begin{proof}
As $\ad_{X_\beta}$ has nilpotency order $3$, the orbit $\mathcal{O}_{\mathrm{min}}$ is of height $2$.
Let $H = [X_\beta , X_{-\beta}]$. Denote by $\mathfrak{l}$ the Lie algebra of the Levi factor of $P$. For $\ad_H$- eigenspaces one has $\g (0) = \mathfrak{l}$ and $\g(2)= \g_{\beta}$. Therefore, the multiplication map 
$$G \times^P \g_{\beta} \to \overline{\mathcal{O}_{\mathrm{min}}}, \quad [g, Y] \mapsto g \cdot Y$$ 
is an equivariant resolution and $\mathcal{O}_\mathrm{min}$ is obtained by parabolic induction on the $L$-variety $\g_{\beta} - \{0\}$. The $L$-variety $\g_\beta$ is of minimal rank hence $\overline{\mathcal{O}_\mathrm{min}}$ is of minimal rank (\cite[Lemma 6]{brion}). Let $Y$ be a $B$-orbit closure of $\overline{\mathcal{O}_\mathrm{min}}$. By Remark \ref{rem-proof}, we conclude that $Y$ admits a resolution of singularities with connected fibers. By Corollary \ref{cor-FS} we have that $Y$ is Frobenius split, as $\g_{\beta}$ consists of elements of rank at most $1$. So, assertion ($1$) follows from \cite[Proposition 1.2.5]{BK}. The minimal $B$-orbit of $\mathcal{O}_\mathrm{min}$ is $B \cdot X_{\beta} = \g_\beta - \{ 0 \}$. Applying \cite[Lemma 6]{brion} to $\mathcal{O}_{\mathrm{min}} \simeq G \times^P B\cdot X_{\beta}$ we obtain that the $B$-orbits of $\mathcal{O}_{\mathrm{min}}$ are
$$\left\{ B \cdot X_{w(\beta)} \mid w \in W^P \right\},$$
where $W^P$ is the set of minimal length representatives of $W/W(L)$. Further, by \textit{loc.cit.} the closure order on
the set of $B$-orbits  identifies with the Bruhat order on $W^P$. Assertions ($2$) and ($3$) follow, because by \cite[Theorem 1]{juteau}
the map 
$$W^P \to \Phi_{\mathrm{lg}}, \quad w \mapsto w(\beta)$$
is an anti-isomorphism of partially ordered sets.    
\end{proof}

\begin{remark}
These results are mostly well known. The reader will find more details on the parametrisation by long roots of $B$-orbits and the Bruhat order in the projectivisation of the minimal nilpotent orbit in \cite{CP}.
\end{remark}


\section{Explicit description of $B$-conjugacy class closures}

In this section, we explicitly describe, for classical groups, the nilpotent conjugacy classes which are spherical as well as the $B$-orbit closures. In non simply laced groups, we give examples of non-normal $B$-orbit closures and in type $B$ we precisely describe which conjugacy classes are of minimal rank and deduce more normality results.

\subsection{On classical Lie algebras}

The Lie algebra $\mathfrak{sl}_n$ is the Lie algebra of trace free square $n$ by $n$ matrices. Denoting by $J_n \in \text{GL}_n (\kk )$ the matrix whose anti-diagonal entries equal one and all other entries are zero, we set 
$$K_n = \left(
\begin{array}{cc}
0&J_n\\
-J_n&0
\end{array}
\right)$$ 
and we define the symplectic groups and special orthogonal groups
$$\text{Sp}_{2n} = \left\{ g \in \text{SL}_{2n} \mid g^T K_n g =   K_n \right\} \quad \textrm{ and } \quad
\text{SO}_n = \left\{ g \in \text{SO}_n \mid g^T J_n g = J_n \right\}.$$
The subgroups of upper-triangular (resp. diagonal) matrices form a Borel subgroup (resp. a maximal torus) and will be denoted by $B$ (resp. by $T$).
The corresponding Lie algebras are
$$\mathfrak{sp}_{2n} = \left\{ x \in \mathfrak{sl}_{2n} \mid x^T = K_n x K_n \right\} \quad \textrm{ and } \quad \mathfrak{so}_n = \left\{ x \in \mathfrak{sl}_n \mid x^T = -J_n x J_n \right\}.$$
The Weyl group of $\text{Sp}_{2n}$ and the simple reflections in it we identify with
$$W_{C_n} = \{ \sigma \in S_{2n} \mid \sigma (1)+\sigma (2n)= \ldots = \sigma (n)+ \sigma (n+1)=2n+1 \},$$
$$\Delta_{C_n} = \left\{ c_i := s_i s_{2n-i} \mid i \in [n-1] \right\} \cup \left\{ c_n := s_n \right\}.$$
For $n = 2r+1$, the Weyl group of $\text{SO}_{n}$ and its simple reflections are
$$W_{B_r}= \{ \sigma \in S_{n} \mid \sigma (1) + \sigma(n) = \ldots = \sigma (r) + \sigma (r+2)= n+1 , \sigma(r+1)=r+1 \},$$
$$\Delta_{B_r} = \left\{ b_i := s_i s_{n-i} \mid i \in [r-1] \right\} \cup \left\{b_r := s_r s_{r+1} s_r \right\},$$
and if $n = 2r$ we will use
$$W_{D_r} = \{ \sigma \in S_n \mid \sigma (1) + \sigma (n)= \ldots = \sigma (r) + \sigma (r+1 )= n+1 , \mid \text{neg}(\sigma )\mid \equiv 0 \text{ mod }2 \},$$
$$\Delta_{D_r} = \left\{d_i = s_i s_{n-i} \mid i \in [r-1] \right\} \cup \left\{d_r := s_r s_{r-1}s_{r+1}s_r \right\},$$
where $\text{neg}(\sigma )= \{ i \in [r] \mid \sigma (i) > r \}$.

\subsection{Spherical nilpotent orbits}

In classical Lie algebras one can also describe spherical nilpotent elements directly from their matrices. This was observed by D. Panyushev in \cite{Panyushev} and generalised in any good characteristic in \cite{FR}. The height $2$ spherical nilpotent elements are:
\begin{enumerate} 
\item conjugacy classes of a $2$-nilpotent element in $\mathfrak{sl}_n$, $\mathfrak{sp}_{2n}$ and $\mathfrak{so}_{n}$ ; 
\item conjugacy classes of a $3$-nilpotent element of rank $2$ in $\mathfrak{so}_{n}$.
\end{enumerate}
In the next subsection we shall consider the different cases.

\subsection{The group $G=\SL_n$} 
Let $(e_i)_{i \in [1,n]}$ be the canonical basis
of $\kk^n$ and for $m \leq n$, let $V_m= \scal{e_i \ | \ i \in [1,m]}$
be the span of the first $m$ basis vectors.
Let $\cN_2$ be the set of nilpotent elements of order at most $2$ in $\g = \mathfrak{sl}_n$. 
The conjugacy classes of 2-nilpotent matrices are indexed by the rank. 
Note that the rank $r$ of a nilpotent element of order $2$ in
satisfies $2r \leq n$. Denoting by $\mathbb{O}_r$ the subscheme of
nilpotent elements of order $2$ and rank $r$ we get that the conjugacy classes are $(\mathbb{O}_r)_{2r \leq n}$. 
For $2r \leq n$, let $P^r$ be the parabolic subgroup of $G$
stabilising the flag $V_r \subset V_{n - r}$. We have a natural
morphism
$$p : \mathbb{O}_r \to G/P^r$$
defined by $p(x) = ( \im(x) , \Ker(x) )$. This morphism extends to a
birational transform of the closure of $\mathbb{O}_r$ as follows. Let $X_r$ be
the variety of pairs
$$X_r = \{ (x,(I,K)) \in \g \times G/P^r \ | \ \im(x) \subset I \subset K
\subset \Ker(x) \}.$$
There is a natural $G$-equivariant morphism $p : X_r \to G/P^r$ given by
the second projection as well as a $G$-equivariant morphism $\pi : X_r
\to \g$ given by the first projection. This last morphism is birational
onto the closure of $\mathbb{O}_r$:
$$\overline{\mathbb{O}_r} = \{ x \in \g \ | \ x^2 = 0 \textrm{ and } \rk(x)
\leq r \}.$$
The above morphism $p$ is the induction over $G/P^r$ described in the previous sections. Let
$\A_r$ be the fiber of $p$ over the flag $V_r \subset V_{n-r}$ \emph{i.e.}
$$\A_r = \{ x \in \g \ | \ \im(x) \subset V_r \subset V_{n - r}
\subset \Ker(x) \}.$$
For the adjoint action by conjugation, the parabolic subgroup $P^r$
stabilises $\A_r$.  We may therefore define the contracted product $G
\times^{P^r} \A_r$ and an easy check gives an isomorphism $G
\times^{P^r} \A_r \simeq X_r$. If we identify these two varieties and
write $[g,x]$ for the class of an element $(g,x) \in G \times \A_r$ in
the contracted product, we can describe the morphisms $p$ and $\pi$ as
follows:  
$$p([g,x]) = g \cdot P_r \textrm{ and } \pi([g,x]) = g . x =
gxg^{-1}.$$

\subsubsection{Closure order on $\mathbb{O}_r$}
In this section we use the results of \cite[Lemma 6]{brion1} 
to give a complete description of the $B$-orbits in $\mathbb{O}_r$.

$W= S_n$ is the Weyl group of $G$, considered as a subgroup of $G$. 
We denote by $W^{P^r}$ the set of minimal length representatives in $W$ of the quotient
$W/W_{P^r}$ and by $W_r$ the Weyl group of $\textrm{GL}_r(\kk)$. Let $B_r$
be the Borel subgroup of upper triangular matrices of
$\textrm{GL}_r(\kk)$. Define $x_r \in \A_r$ by
 \begin{equation*}
 x_r (e_i) = \left\{
\begin{array}{rl}
 0 & \text{if } i= 1 , \ldots , n-r\\
 e_{i-(n-r)} & \text{if } i = n-r+1 ,  \ldots , n 
\end{array}\right.
\end{equation*}
Denote the stabilizer of $x_r$ in $G$ by $C_r$. A direct computation shows that 
$C_r$ is the subgroup of $P^r$ consisting of the matrices whose upper-left and lower-right block coincide.
Denote by $\prec$ the Bruhat-Chevalley order on $W$.
The minimal parabolic subgroup generated by $B$ and $s_i$ is denoted by $P_i$. 
Further, we set $W (C_r ) := W \cap C_r$.
\begin{lemma}\label{lemmaOrder}
The following holds:
\begin{enumerate}
\item $\mathbb{O}_r = \coprod_{(\sigma ,w) \in W^{P^r} \times W_r} B \sigma w .x_r$.
\item $B\sigma' w' . x_r \subseteq \overline{B \sigma w. x_r} \subseteq \mathbb{O}_r$
if and only if\\
there exists $\tau \prec \sigma w$ with $\sigma' w' W(C_r ) = \tau W(C_r )$.
\item $\dim B\sigma w . x_r = l(\sigma) + l(w) + \binom{r+1}{2}$. 
\end{enumerate}

\end{lemma}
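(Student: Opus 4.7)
The plan is to deduce all three assertions from the parabolic induction $\mathbb{O}_r \cong G \times^{P^r} L \cdot x_r$ recalled at the start of this section, combined with \cite[Lemma 6]{brion1}, which describes the $B$-orbits and the closure order on a spherical $G$-variety obtained by parabolic induction from an $L$-variety.

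First I would identify the $L$-variety $L \cdot x_r$ and determine its $B_L$-orbits, where $B_L = B \cap L$. Writing $L = \GL_r \times \GL_{n-2r} \times \GL_r$ according to the block structure of $P^r$ and solving $g x_r = x_r g$ in blocks, the stabilizer of $x_r$ in $L$ is the diagonal subgroup $\{(A,E,A) \mid A \in \GL_r, E \in \GL_{n-2r}\}$. The map $(A,E,K) \mapsto A K^{-1}$ then induces an $L$-equivariant isomorphism $L \cdot x_r \simeq \GL_r$, through which $B_L$ acts via $B_r \times B_r$ by $(b_1, b_3) \cdot M = b_1 M b_3^{-1}$. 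The Bruhat decomposition $\GL_r = \coprod_{w \in W_r} B_r w B_r$ then produces a bijection between the $B_L$-orbits on $L \cdot x_r$ and $W_r$, with $\dim B_L \cdot w x_r = \ell(w) + \binom{r+1}{2}$.

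Next I would apply \cite[Lemma 6]{brion1} to $\mathbb{O}_r = G \times^{P^r} L \cdot x_r$. This yields a bijection $W^{P^r} \times W_r \to \{B\text{-orbits on } \mathbb{O}_r\}$ given by $(\sigma, w) \mapsto B \sigma w \cdot x_r$, together with the dimension formula $\dim B \sigma w \cdot x_r = \ell(\sigma) + \dim B_L \cdot w x_r = \ell(\sigma) + \ell(w) + \binom{r+1}{2}$, which proves (1) and (3).

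For (2), I would use the identification $\mathbb{O}_r = G/C_r$ to translate the problem into the Bruhat decomposition of $G$. Since $C_r \supset T$, the double cosets $B \backslash G / C_r$ correspond to $W/W(C_r)$, and the standard closure formula for Bruhat cells gives $\overline{B \tau C_r}/C_r = \bigcup_{\tau'' \prec \tau} B \tau'' C_r / C_r$, i.e., $B \tau' \cdot x_r \subseteq \overline{B \tau \cdot x_r}$ if and only if there is $\tau'' \prec \tau$ in $W$ with $\tau'' W(C_r) = \tau' W(C_r)$. It remains to check that the bijection $(\sigma, w) \mapsto \sigma w W(C_r)$ is compatible with the parametrization of (1); this uses the factorization $W = W^{P^r} W_{P^r}$, the embedding of $W_r$ as a set of minimal coset representatives of $W_{P^r}/W(L \cap C_r)$ inside $W_{P^r}$, and the length additivity $\ell(\sigma w) = \ell(\sigma) + \ell(w)$ for $\sigma \in W^{P^r}$ and $w \in W_{P^r}$. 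The delicate point of the argument is this compatibility check: one must verify that $\sigma w$ is a minimal-length representative of its coset in $W/W(C_r)$ for every $(\sigma, w) \in W^{P^r} \times W_r$, and that each coset of $W/W(C_r)$ is obtained exactly once in this way. Once this is settled, (2) follows at once from the closure formula for Bruhat cells in $G/C_r$.
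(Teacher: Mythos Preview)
Your treatment of (1) and (3) via \cite[Lemma 6]{brion1} is correct and coincides with the paper's argument.

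For (2), however, there is a genuine gap. First, the claim $C_r \supset T$ is false: a diagonal matrix $t = \mathrm{diag}(t_1,\dots,t_n)$ satisfies $t x_r t^{-1} = x_r$ only when $t_j = t_{n-r+j}$ for all $j \in [r]$, so $T \cap C_r$ is a proper subtorus. More importantly, there is no ``standard closure formula for Bruhat cells'' in $G/C_r$ when $C_r$ is not a parabolic subgroup. For a spherical subgroup $H \subset G$, the closure order on $B \backslash G / H$ is in general \emph{not} obtained from the Bruhat order on $W$ by passing to a quotient; this already fails for symmetric spaces such as $\GL_r/\mathrm{O}_r$ (compare the type~$C$ discussion and Lemma \ref{lmmSphericalZariski}). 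What you are asserting is precisely the content of (2), so it cannot be invoked as ``standard''.

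The paper's proof of (2) avoids this by using the minimal-rank property. Since $L \cdot x_r \simeq \GL_r$ is a group, \cite[Lemma 6]{brion1} shows that every weak-order covering relation on $B$-orbits of $\mathbb{O}_r$ is of type $U$. Choosing a reduced expression $\sigma w = s_{i_1}\cdots s_{i_m}$ and raising the minimal orbit $B.x_r$ step by step then gives
\[
\overline{B\sigma w . x_r}\cap \mathbb{O}_r \;=\; P_{i_1}\cdots P_{i_m}.x_r \;=\; \bigcup_{\tau \prec \sigma w} B\tau . x_r,
\]
the last equality coming from $P_{i_1}\cdots P_{i_m} = \bigcup_{\tau \prec \sigma w} B\tau B$ together with $B.x_r$ being a single $B$-orbit. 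Statement (2) then follows from the elementary observation that $\tau . x_r = \tau' . x_r$ if and only if $\tau W(C_r) = \tau' W(C_r)$. The type-$U$ input is essential here; without it your closure formula is unavailable.
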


\begin{proof}
With $\A_r^0 := \{ x \in \A_r \mid \text{rk}(x)= r \}$ 
we have an $B$-equivariant isomorphism
$$G \times^{P^r} \A_r^0 \to \mathbb{O}_r , \quad [g,x] \mapsto g.x .$$  
The identification $\A_r^0 \simeq \mathrm{GL}_r (\kk)$ provides a Bruhat decomposition 
$$\A_r^0 = \coprod_{w \in W_r} B w.x_r .$$
The first claim and the minimality of the $B$-orbit $B.x_r$ follow from \cite[Lemma 6]{brion1}.

Now, choose a reduced expression $\sigma w = s_{i_1} \ldots s_{i_r}$. Using that all covering relations are of type $U$ (again by \cite[Lemma 6]{brion1}) we see that the closure of $B \sigma w . x_r$ in $\mathbb{O}_r$ is given by 
$$P_{i_1} \ldots P_{i_r} . x_r = \bigcup_{\tau \prec \sigma w} B \tau . x_r .$$
The second claim follows, because $\tau' .x_r = \tau . x_r$ if and only if $\tau' W(C_r )= \tau W(C_r)$.

For the third claim, note that $l(\sigma w) = l(\sigma )+l(w)$ since $w \in W_r \subseteq W_{P^r}$. Therewith
\begin{multline*}
\dim B \sigma w . x_r = \dim P_{i_1}\ldots P_{i_r} . x_r =l(\sigma w) + \dim (B. x_r)\\
=l(\sigma w ) + \dim (B_r ) = l(\sigma) + l(w) + \binom{r+1}{2}
\end{multline*}
and the last claim follows.
\end{proof}

\begin{remark} The results of \cite[Lemma 6]{brion1} give a description of the $B$-orbits in $X= G \times^{P^r} \A_r$ as well:
They are indexed by $W^{P^r} \times \mathcal{P}(r)$, where $\mathcal{P}(r)$ is the set of partial permutation matrices
in $M_r (\kk)$.
\end{remark}
\begin{remark} Borel orbits in 2-nilpotent matrices have been investigated before in \cite{Boos-Reineke}. In order to compare our results with these previous results, denote for $(i,j) \in [n]^2$ by $E_{i,j} \in \g$ the corresponding elementary matrix. Then
$$\sigma w . x_r =  \sigma w x_r (\sigma w)^{-1}=\sum_{j=1}^r E_{\sigma w (j) , \sigma (n-r+j)}.$$
This is the $2$-\textit{nilpotent matrix associated to the oriented link pattern on $r$ arcs}
$$(\sigma(n-r+1) , \sigma w(1)), \ldots , (\sigma(n),\sigma w(r)), $$
as in \cite{Boos-Reineke}. It was there shown that oriented link patterns parametrize $B$-orbits in $\overline{\mathbb{O}_r}$ by using representation theory of quivers. The closure order on $B \backslash \overline{\mathbb{O}_r}$ (and not only on $B \backslash \mathbb{O}_r$ as in Lemma \ref{lemmaOrder} $(2)$) was determined in \cite{Boos-Reineke} as well. The term \textit{oriented link pattern} refers to an extension of the term \textit{link pattern} which appeared first in physical literature (cf. \cite{DFZJ}). It was shown in \cite{Melnikov} that link patterns parametrize $B$-orbits of $2$-nilpotent upper-triangular matrices. 
 
\end{remark}

\begin{example}
For $(n,r)=(4,2)$ we have $W_r =  \scal{s_1} $,
$W( C_r ) = \scal{s_1,s_3}$ and 
$$W^{P^r} = \left\{ \sigma \in W \mid \sigma (1) < \sigma (2), \sigma(3)<\sigma(4) \right\} = \left\{ \text{id} , s_2 ,s_1 s_2 , s_3 s_2 , s_1 s_3 s_2 , s_2 s_1 s_3 s_2 \right\}.$$ 
Using Lemma \ref{lemmaOrder} we obtain

\begin{center}
\setlength{\unitlength}{2569sp}%
\begingroup\makeatletter\ifx\SetFigFont\undefined%
\gdef\SetFigFont#1#2#3#4#5{%
  \reset@font\fontsize{#1}{#2pt}%
  \fontfamily{#3}\fontseries{#4}\fontshape{#5}%
  \selectfont}%
\fi\endgroup%
\begin{picture}(4977,6178)(2026,-6611)
\thinlines
{\color[rgb]{0,0,0}\put(5056,-6271){\line( 1, 1){810}}
}%
{\color[rgb]{0,0,0}\put(4471,-6316){\line(-1, 1){825}}
}%
{\color[rgb]{0,0,0}\put(3436,-5026){\line(-1, 1){915}}
}%
{\color[rgb]{0,0,0}\put(2491,-3661){\line( 0, 1){810}}
}%
{\color[rgb]{0,0,0}\put(2686,-2401){\line( 1, 1){720}}
}%
{\color[rgb]{0,0,0}\put(3721,-1246){\line( 1, 1){645}}
}%
{\color[rgb]{0,0,0}\put(6061,-5056){\line( 1, 1){915}}
}%
{\color[rgb]{0,0,0}\put(6991,-3736){\line( 0, 1){960}}
}%
{\color[rgb]{0,0,0}\put(6946,-2416){\line(-1, 1){750}}
}%
{\color[rgb]{0,0,0}\put(3661,-1711){\line( 1,-1){765}}
}%
{\color[rgb]{0,0,0}\put(3991,-1576){\line( 5,-2){2438.793}}
}%
{\color[rgb]{0,0,0}\put(6721,-3766){\line(-4, 1){3924.706}}
}%
{\color[rgb]{0,0,0}\put(4861,-3736){\line( 2, 1){1788}}
}%
{\color[rgb]{0,0,0}\put(4771,-2776){\line( 0,-1){990}}
}%
{\color[rgb]{0,0,0}\put(4516,-2806){\line(-2,-1){1758}}
}%
{\color[rgb]{0,0,0}\put(4966,-4171){\line( 1,-1){885}}
}%
{\color[rgb]{0,0,0}\put(3061,-4006){\line( 5,-2){2550}}
}%
{\color[rgb]{0,0,0}\put(4591,-4156){\line(-1,-1){930}}
}%
{\color[rgb]{0,0,0}\put(4321,-5236){\line( 2, 1){2250}}
}%
{\color[rgb]{0,0,0}\put(4996,-2461){\line( 1, 1){780}}
}%
{\color[rgb]{0,0,0}\put(3031,-2461){\line( 3, 1){2551.500}}
}%
{\color[rgb]{0,0,0}\put(5746,-1276){\line(-1, 1){675}}
}%
\put(3181,-1531){\makebox(0,0)[lb]{\smash{{\SetFigFont{6}{7.2}{\familydefault}{\mddefault}{\updefault}{\color[rgb]{0,0,0}($s_2 ,\text{id}$)}%
}}}}
\put(6526,-2671){\makebox(0,0)[lb]{\smash{{\SetFigFont{6}{7.2}{\familydefault}{\mddefault}{\updefault}{\color[rgb]{0,0,0}($s_2 ,s_1$)}%
}}}}
\put(4321,-2671){\makebox(0,0)[lb]{\smash{{\SetFigFont{6}{7.2}{\familydefault}{\mddefault}{\updefault}{\color[rgb]{0,0,0}($s_3 s_2 ,\text{id}$)}%
}}}}
\put(2206,-2656){\makebox(0,0)[lb]{\smash{{\SetFigFont{6}{7.2}{\familydefault}{\mddefault}{\updefault}{\color[rgb]{0,0,0}($s_1 s_2 ,\text{id}$)}%
}}}}
\put(4306,-3991){\makebox(0,0)[lb]{\smash{{\SetFigFont{6}{7.2}{\familydefault}{\mddefault}{\updefault}{\color[rgb]{0,0,0}($s_3 s_2 ,s_1$)}%
}}}}
\put(6541,-3991){\makebox(0,0)[lb]{\smash{{\SetFigFont{6}{7.2}{\familydefault}{\mddefault}{\updefault}{\color[rgb]{0,0,0}($s_1 s_2 ,s_1$)}%
}}}}
\put(5521,-5281){\makebox(0,0)[lb]{\smash{{\SetFigFont{6}{7.2}{\familydefault}{\mddefault}{\updefault}{\color[rgb]{0,0,0}($s_1 s_3 s_2 ,s_1$)}%
}}}}
\put(5566,-1501){\makebox(0,0)[lb]{\smash{{\SetFigFont{6}{7.2}{\familydefault}{\mddefault}{\updefault}{\color[rgb]{0,0,0}($\text{id},s_1$)}%
}}}}
\put(2041,-3961){\makebox(0,0)[lb]{\smash{{\SetFigFont{6}{7.2}{\familydefault}{\mddefault}{\updefault}{\color[rgb]{0,0,0}($s_1 s_3 s_2 ,\text{id}$)}%
}}}}
\put(3136,-5326){\makebox(0,0)[lb]{\smash{{\SetFigFont{6}{7.2}{\familydefault}{\mddefault}{\updefault}{\color[rgb]{0,0,0}($s_2 s_1 s_3 s_2 , \text{id}$)}%
}}}}
\put(4156,-6556){\makebox(0,0)[lb]{\smash{{\SetFigFont{6}{7.2}{\familydefault}{\mddefault}{\updefault}{\color[rgb]{0,0,0}($s_2 s_1 s_3 s_2 ,s_1$)}%
}}}}
\put(4486,-556){\makebox(0,0)[lb]{\smash{{\SetFigFont{6}{7.2}{\familydefault}{\mddefault}{\updefault}{\color[rgb]{0,0,0}($\text{id,id}$)}%
}}}}
\end{picture}
\textbf{Figure 1. Type $A_{n-1}$ closure graph for $(n,r)=(4,2)$ .} 
\end{center}
\end{example}

\subsubsection{$B$-orbit closures as sets of linear maps.}
Recall that $V_i \subseteq \kk^n$ denotes the coordinate subspace generated by $e_1 , \ldots ,e_i$.
For $0 \leq 2r \leq n$ and $(\sigma ,w ) \in W^{P^r} \times W_r$ we consider $Z(\sigma , w) = \overline{B \sigma w . x_r} \subseteq \cN_2$. All $B$-conjugacy class closures in $\cN_2$ are of this form. In order to describe 
$Z(\sigma ,w)$ explicitly, we define 
$$r(i,j ,  x) := \dim \left( x(V_i )+V_j \right) ,\text{ for } (i,j) \in [n] \times [n] \cup \{ 0\},$$
where we have set $V_0 := \left\{ 0 \right\}$.
\begin{lemma}\label{lemmaEquations} As a set of linear maps,
$$Z(\sigma , w) = \left\{ x \in \cN_2 \mid r(i,j , x) \leq r(i,j , \sigma w . x_r ), \text{ for all }  (i,j) \in [n] \times [n] \cup \{ 0\} \right\}.$$
\end{lemma}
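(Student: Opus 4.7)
The plan is to verify the two inclusions separately. For the inclusion $Z(\sigma, w) \subseteq \{x : r(i,j,x) \leq r(i,j, \sigma w \cdot x_r)\}$, we will observe that $r(i,j,x)$ is the rank of the linear map $V_i \oplus V_j \to \kk^n$ sending $(v,w) \mapsto x(v) + w$, so the set $\{x \in \cN_2 : r(i,j,x) \leq k\}$ is closed for every integer $k$. Since $B$ consists of upper-triangular matrices it preserves every coordinate subspace $V_\ell$, hence $r(i, j, bxb^{-1}) = \dim b(x(V_i) + V_j) = r(i,j,x)$, i.e.\ $r(i,j,-)$ is $B$-invariant. The right-hand side is thus a closed $B$-stable subset containing $\sigma w \cdot x_r$, hence it contains the orbit closure $Z(\sigma, w)$.

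For the reverse inclusion, we will take $x \in \cN_2$ satisfying all the stated inequalities. By Lemma \ref{lemmaOrder}(1) applied to the stratum $\mathbb{O}_{r'}$ of rank $r' = \rk(x) \leq r$, there is a unique $(\sigma', w') \in W^{P^{r'}} \times W_{r'}$ with $x \in B \sigma' w' \cdot x_{r'}$, and by $B$-invariance of $r(i,j,-)$ the element $\sigma' w' \cdot x_{r'}$ satisfies the same rank inequalities. Next, we will use the explicit form $\sigma w \cdot x_r = \sum_{k=1}^{r} E_{\sigma w(k),\, \sigma(n-r+k)}$ to verify by direct computation that
$$r(i,j, \sigma w \cdot x_r) = j + \#\{k \in [1,r] : \sigma(n-r+k) \leq i \text{ and } \sigma w(k) > j\},$$
which is exactly the standard rank statistic attached to the partial permutation matrix of $\sigma w \cdot x_r$, and likewise for $\sigma' w' \cdot x_{r'}$. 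The proof then reduces to showing that the inequalities between these rank statistics, taken over all $(i,j)$, are equivalent to the existence of $\tau \prec \sigma w$ in $S_n$ with $\sigma' w' W(C_r) = \tau W(C_r)$; at that point Lemma \ref{lemmaOrder}(2) yields $x \in Z(\sigma,w)$.

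The main obstacle is this final combinatorial step: translating the matrix rank inequalities into the Bruhat order condition on canonical lifts. This is classical for $\textrm{GL}_n$ matrix Schubert varieties (see \cite{fulton}), but here one must account for the mixed indexing of $B$-orbits via pairs $(\sigma', w') \in W^{P^{r'}} \times W_{r'}$ modulo $W(C_r)$ and for the variable rank $r' \leq r$. A natural route is to embed each partial permutation of rank $r' \leq r$ into $S_n$ via its canonical minimal-length lift and then invoke the standard Bruhat-order/rank-data dictionary for (partial) permutations, matching it term by term with the coset condition appearing in Lemma \ref{lemmaOrder}(2).
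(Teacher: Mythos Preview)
Your argument for the inclusion $Z(\sigma,w)\subseteq\{x:r(i,j,x)\le r(i,j,\sigma w\cdot x_r)\}$ is correct and essentially the same as what the paper needs.

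For the reverse inclusion your route through Lemma~\ref{lemmaOrder}(2) is not a proof but a plan, and it faces two genuine obstacles. First, Lemma~\ref{lemmaOrder}(2) only gives the closure order \emph{inside} $\mathbb{O}_r$; it says nothing about which $B$-orbits of $\mathbb{O}_{r'}$ with $r'<r$ lie in $Z(\sigma,w)$, so even a complete Bruhat/rank dictionary in rank $r$ would not finish the argument. Second, the translation you sketch (minimal-length lifts of partial permutations, matching the coset condition $\sigma'w'W(C_r)=\tau W(C_r)$ across varying $r'$ with the rank inequalities) is exactly the nontrivial combinatorics of matrix Schubert varieties; you cite it but do not carry it out, and the mixed coset indexing makes the bookkeeping delicate.

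The paper avoids all of this with one observation that you are missing: the $B$-\emph{conjugacy} orbit coincides with the trace of a $(B,B)$-\emph{double coset}, namely
\[
B\sigma w\cdot x_r \;=\; \mathbb{O}_r \cap B(\sigma w\cdot x_r)B.
\]
This follows from Lemma~\ref{lemmaOrder}(1): both sides give partitions of $\mathbb{O}_r$ into $B$-stable pieces, each conjugacy orbit sits inside a double coset (because $bxb^{-1}\in BxB$), and distinct representatives $\sigma w\cdot x_r$ are distinct partial permutation matrices, hence lie in distinct double cosets. Since $(B,B)$-double cosets in $M_n(\kk)$ are characterised exactly by the rank data $r(i,j,-)$, one gets
\[
B\sigma w\cdot x_r=\{x\in\mathbb{O}_r\mid r(i,j,x)=r(i,j,\sigma w\cdot x_r)\ \text{for all }i,j\},
\]
and taking the closure in $\cN_2$ replaces the equalities by inequalities, because the closure of $B(\sigma w\cdot x_r)B$ in $M_n(\kk)$ is precisely the matrix Schubert variety cut out by those rank bounds. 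This handles all ranks $r'\le r$ simultaneously and never touches the Bruhat-order description of Lemma~\ref{lemmaOrder}(2). In short, the missing idea in your proposal is to pass from $B$-conjugation to left--right $B\times B$-multiplication; once you do that, the result is immediate from standard facts about double cosets of partial permutation matrices.
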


\begin{proof} This result appears in \cite{Rothbach}. We give a shorter proof. The space $\g = M_n (\kk)$ is the disjoint union of $(B,B)$-double cosets of partial permutation matrices. 
Further, for $x,y \in \g$ one has $BxB = ByB$ if and only if
$$r (i,j , x) = r(i,j ,y ), \text{ for all } (i,j) \in [n] \times [n] \cup \{ 0 \}.$$
From Lemma \ref{lemmaOrder} we derive that $B\sigma w.x_r = \mathbb{O}_r \cap B (\sigma w .x_r) B$. Consequently, 
$$
B \sigma w .x_r =\left\{ x \in \mathbb{O}_r \mid r(i,j , x) = r(i,j , \sigma w . x_r ), \text{ for all }  (i,j) \in [n] \times [n] \cup \{ 0\} \right\}. $$
Taking the closure in $\cN_2$ we obtain the wanted description of $Z(\sigma ,w)$.
\end{proof}

\begin{example}\label{exampleNonGorenstein}
Let $(n,r)=(3,1)$. Then $W_r = \{ 1 \}$ and $W^{P_r} = W$. Take $(\sigma , w) = ( s_1 s_2 , 1 )$. Then $Z(s_1 s_2 , 1)$ is 3-dimensional. Using Lemma \ref{lemmaEquations} we see it is isomorphic to
$$Z= \left\{ \left( \begin{array}{cc}
A & v \\
0 & 0 \\
\end{array}
\right) \ | \ (A , v ) \in M_2 (\kk) \times \kk^2,\quad A^2 =  0 \textrm{ and } Av = 0  \right\}.$$
This is a toric variety: The open torus is 
$$T_Z := Z \cap (M_2 (\kk^* ) \times (\kk^* )^2 ) = \left\{ \left( \left(
\begin{array}{cc}
t_1 & -t_2\\
\frac{t_1^2}{t_2}&-t_1 
\end{array}\right) , \binom{t_3}{\frac{t_1 t_3}{t_2}} \right) \mid t_1 ,t_2 ,t_3 \in \kk^* \right\}.$$
By considering $1$-parameter subgroups of $T_Z$ we see that $Z$ is the toric variety associated with the cone
$$\mathbb{Q}_{\geq 0} \left(
\begin{array}{c}
0\\
0\\
1 
\end{array}\right) + \mathbb{Q}_{\geq 0} \left(
\begin{array}{c}
1\\
0\\
0 
\end{array}\right) + \mathbb{Q}_{\geq 0} \left(
\begin{array}{c}
1\\
1\\
0 
\end{array}\right) + \mathbb{Q}_{\geq 0} \left(
\begin{array}{c}
1\\
2\\
1 
\end{array}\right). $$ 
We conclude that $Z$ is not Gorenstein, as the generators of this cone are not contained in an affine hyperplane in $\Q^3$.
\end{example}

\subsection{The group $\Sp_{2n}$}

\subsubsection{Symplectic $2$-nilpotent elements}
A $2$-nilpotent element in $\mathfrak{sp}_{2n}$ is of rank $1 \leq r \leq n$. These elements form a $\text{Sp}_{2n}$-conjugacy class we denote by $\mathcal{C}_r$. 
Further we will be dealing with 
$$\mathcal{S}_r = \left\{ x \in \mathcal{C}_r \mid V_r = \text{Im}(x) \subseteq \text{Ker}(x)=V_{2n -r} \right\},$$
and $P^r \subseteq \text{Sp}_{2n}$ the parabolic subgroup given by elements stabilizing the vector subspace $V_r$, which acts on $\mathcal{S}_r$.
Denoting by $L^r \subseteq P^r$ the Levi factor, we see that the $P^r$-action on $\mathcal{S}_r$ factors through $P^r \to L^r$ and that the multiplication map
$$\text{Sp}_{2n} \times^{P^r} \mathcal{S}_r \to \mathcal{C}_r , \quad [g,x] \mapsto g.x,$$
is an isomorphism. So $\mathcal{C}_r$ is obtained by parabolic induction on the $L^r$-variety $\mathcal{S}_r$.\\  
Denote by $S_r^{(2)}$ the set of involutions in $S_r$ and by $R_r := \{ \tau \in S_r  \mid J_r \tau \in S_r^{(2)} \}$. Define
$$W^r := \{ \sigma \in W_{C_n} \mid \sigma (1) < \ldots < \sigma (r),\quad  \sigma (r+1) <\ldots < \sigma(2n-r)\},$$
forming the set of minimal length representatives of $W_{C_n}/ W(L^{r})$. Finally, for a permutation $\sigma \in S_{2n}$ we set $\text{pos}(\sigma)= \{ i \in [n] \mid \sigma (i)<n \}$ and $\text{neg}(\sigma) = [n]- \text{pos}(\sigma)$.

\begin{lemma}\label{lemmaBOrbitsTypeC}
A set of representatives for the $B$-conjugacy classes in $\mathcal{C}_r$ is given by the elements
$$ x(\sigma , \tau):=  \sum_{\tau(i) \in \text{pos}(\sigma)} E_{\sigma \tau (i), \sigma(2n-r+i)} - \sum_{\tau(j) \in \text{neg}(\sigma)} E_{\sigma \tau (j), \sigma(2n-r+j)},$$
with $(\sigma , \tau ) \in W^r \times R_r$. 
\end{lemma}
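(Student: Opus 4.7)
The plan is to mimic the proof of Lemma~\ref{lemmaMinOrbit}. First, apply \cite[Lemma 6]{brion1} to the parabolic induction isomorphism
\[
\mathcal{C}_r \simeq \Sp_{2n} \times^{P^r} \mathcal{S}_r .
\]
This identifies the $B$-orbits on $\mathcal{C}_r$ with pairs $(\sigma, \mathcal{O})$, where $\sigma$ ranges over the set $W^r$ of minimal length representatives of $W_{C_n}/W(L^r)$ and $\mathcal{O}$ ranges over the $B_{L^r}$-orbits on $\mathcal{S}_r$; a representative of the $B$-orbit attached to $(\sigma, \mathcal{O})$ is $\dot\sigma \cdot y$ for any lift $\dot\sigma \in N_{\Sp_{2n}}(T)$ of $\sigma$ and any $y \in \mathcal{O}$. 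It thus suffices to parametrise the $B_{L^r}$-orbits on $\mathcal{S}_r$ and to identify, for each $\tau \in R_r$, a preferred representative.

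\medskip

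\noindent The second step carries out this parametrisation. Since every $x\in\mathcal{S}_r$ already satisfies $\mathrm{Im}(x)=V_r$ and $\mathrm{Ker}(x)=V_{2n-r}$, the unipotent radical of $P^r$ acts trivially, and so does the $\Sp_{2(n-r)}$-factor of $L^r=\GL_r\times\Sp_{2(n-r)}$. Via the symplectic form, $\kk^{2n}/V_{2n-r}$ is identified with $V_r^{*}$ (up to the anti-diagonal flip $J_r$), and the identity $x^T = K_n x K_n$ becomes symmetry of the induced form on $V_r$. One obtains an isomorphism $\mathcal{S}_r \cong \mathrm{Sym}_r^{\times}(\kk)$ of $\GL_r$-varieties, where $\GL_r$ acts by $g\cdot A = g A g^T$. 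The $B_r$-orbits on $\mathrm{Sym}_r^{\times}(\kk)$ are classically in bijection with the set $S_r^{(2)}$ of involutions in $S_r$: this follows from writing $\mathrm{Sym}_r^{\times}(\kk) \simeq \GL_r/O_r$ and applying the Richardson--Springer description of $(B_r,O_r)$-double cosets in $\GL_r$. The further bijection $R_r \to S_r^{(2)}$, $\tau \mapsto J_r\tau$, provides the labelling by $R_r$, and a direct calculation shows that, under the identification $\mathcal{S}_r \hookrightarrow \mathfrak{sp}_{2n}$, the orbit attached to $\tau\in R_r$ admits
\[
y_\tau := \sum_{i=1}^r E_{\tau(i),\,2n-r+i}
\]
as a representative.

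\medskip

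\noindent The final step is to conjugate $y_\tau$ by a chosen lift $\dot\sigma \in N_{\Sp_{2n}}(T)$ of $\sigma \in W^r$. Because lifts of $\sigma \in W_{C_n} \subset S_{2n}$ are in general not permutation matrices of $\Sp_{2n}$, the anti-diagonal block shape of $K_n$ forces $\dot\sigma$ to carry signs: a factor of $-1$ appears in the column indexed by $i$ precisely when $\sigma$ sends the $i$-th basis vector across the middle of the flag, a phenomenon recorded exactly by the partition $[n] = \mathrm{pos}(\sigma) \sqcup \mathrm{neg}(\sigma)$. Writing $\dot\sigma \cdot y_\tau = \dot\sigma\, y_\tau\, \dot\sigma^{-1}$ then yields a sum of terms $\varepsilon_i\, E_{\sigma\tau(i),\,\sigma(2n-r+i)}$ in which the sign $\varepsilon_i \in \{\pm 1\}$ is $+$ or $-$ according as $\tau(i) \in \mathrm{pos}(\sigma)$ or $\tau(i) \in \mathrm{neg}(\sigma)$; this is precisely the formula for $x(\sigma,\tau)$ stated in the lemma. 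The hard part is expected to be this final sign bookkeeping: although the abstract bijection is produced for free by \cite[Lemma 6]{brion1} combined with the classical parametrisation of $B$-orbits on symmetric matrices, extracting the exact closed combinatorial formula requires a careful analysis of how the symplectic form constrains lifts of Weyl group elements and how the resulting signs interact with the adjoint action.
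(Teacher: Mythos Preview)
Your proposal is correct and follows essentially the same route as the paper: reduce via \cite[Lemma 6]{brion1} to the $B_{L^r}$-orbits on $\mathcal{S}_r$, identify $\mathcal{S}_r$ with the symmetric space $\GL_r/O_r$ (the paper uses the model $Y_r=\{h\in\GL_r\mid h^T=J_rhJ_r\}$, which is your $\mathrm{Sym}_r^\times$ twisted by $J_r$), invoke Richardson--Springer to index orbits by $R_r$, and then conjugate the block representative $y_\tau$ by the explicit symplectic lift $\dot\sigma$ whose signs are governed by $\mathrm{pos}(\sigma)\sqcup\mathrm{neg}(\sigma)$. The paper writes down $\dot\sigma$ explicitly (sending $e_i\mapsto -e_{\sigma(i)}$ exactly for $i\in\mathrm{neg}(\sigma)$) rather than arguing qualitatively, but the content is the same.
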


\begin{proof}
An element $\sigma \in W_{C_n} = N(T)/T$ is represented by the element $\dot\sigma \in N(T)$ given by
$$\dot\sigma (e_i) = \left\{
\begin{array}{rl}
e_{\sigma (i)} & \text{, if }i \in \text{pos}(\sigma) \cup \{n+1 , \ldots , 2n \} \\
-e_{\sigma(i)} & \text{, if }i \in \text{neg}(\sigma) .
\end{array} \right\},$$
and for any $\tau \in R_r$ one computes that
$$x(\sigma , \tau )= \dot\sigma . \left( 
\begin{array}{ccc}
0&0&\tau \\
0&0&0 \\
0&0&0
\end{array}\right).$$
The morphism which maps an element of $\mathfrak{sp}_{2n}$ to its upper-right-hand $r \times r$-block induces an equivariant isomorphism between the $L^r$-variety $\mathcal{S}_r$ and the $\text{GL}_r$-variety $Y_r =\{ h \in \text{GL}_r \mid h^T = J_r h J_r \}$, where $\text{GL}_r$ acts via
$$g.h = gh J_r g^T J_r , \quad g \in \text{GL}_r , h \in Y_r .$$
By \cite{RS1} $B_r$-orbits in the symmetric space $Y_r \simeq \text{GL}_r / O_r$ are indexed by $R_r$. As we have seen, $\mathcal{C}_r$ is obtained by parabolic induction on the $L^r$-variety $\mathcal{S}_r$. Hence the claim follows from \cite[Lemma 6]{brion1}.
\end{proof}

\begin{lemma}\label{lemmaEquationsTypeC}
Let $(\sigma , \tau ) \in W^r \times R_r$. Then
$$\overline{B \cdot x(\sigma , \tau )} = \left\{ x \in \overline{\mathcal{C}_r }  \mid r(i,j , x) \leq r(i,j , x(\sigma , \tau ) ), \text{ for all }  (i,j) \in [2n] \times [2n] \cup \{ 0\} \right\}.$$
\end{lemma}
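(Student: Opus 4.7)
The plan is to mirror the argument of Lemma~\ref{lemmaEquations} using the ambient matrix algebra. Let $B'\subset\mathrm{GL}_{2n}$ denote the Borel subgroup of upper triangular matrices, so that $B=B'\cap\mathrm{Sp}_{2n}$. The space $M_{2n}(\mathsf k)$ decomposes as a disjoint union of $(B',B')$-double cosets indexed by partial permutation matrices, and two elements $x,y\in M_{2n}(\mathsf k)$ lie in the same $(B',B')$-double coset iff $r(i,j,x)=r(i,j,y)$ for all $(i,j)\in[2n]\times([2n]\cup\{0\})$. Moreover $\overline{B'xB'}=\{y\in M_{2n}(\mathsf k)\mid r(i,j,y)\leq r(i,j,x)\}$. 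Since $B\subset B'$, each $B$-conjugacy orbit in $\mathfrak{sp}_{2n}$ is contained in a single $(B',B')$-double coset.

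First I would prove the crucial set-theoretic identity
\[
B\cdot x(\sigma,\tau)\;=\;\mathcal C_r\cap B'\,x(\sigma,\tau)\,B'.
\]
The inclusion $\subseteq$ is immediate, since conjugation by $b\in B\subset B'$ sends $x(\sigma,\tau)$ into $B'x(\sigma,\tau)B'$ and preserves $\mathcal C_r$. For $\supseteq$, Lemma~\ref{lemmaBOrbitsTypeC} tells us that any $y\in\mathcal C_r$ lies in some $B\cdot x(\sigma',\tau')$ with $(\sigma',\tau')\in W^r\times R_r$. If in addition $y\in B'x(\sigma,\tau)B'$, then $x(\sigma,\tau)$ and $x(\sigma',\tau')$ have identical rank invariants, and it remains to check that the map
\[
W^r\times R_r\;\longrightarrow\;\{(B',B')\text{-double cosets in }M_{2n}(\mathsf k)\},\qquad (\sigma,\tau)\mapsto B'x(\sigma,\tau)B',
\]
is injective. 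From the explicit formula for $x(\sigma,\tau)$ in Lemma~\ref{lemmaBOrbitsTypeC}, its nonzero positions are $\{(\sigma\tau(i),\sigma(2n-r+i))\mid i\in[r]\}$; the set of columns recovers $\sigma|_{[1,r]}$ (hence all of $\sigma$ by the $W^r$ condition and the symplectic constraint $\sigma(i)+\sigma(2n+1-i)=2n+1$), after which the row–column pairing recovers $\tau$. This is the step I expect to carry the bulk of the combinatorial weight but is otherwise elementary.

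Given the identity above, the closure on the left equals
\[
\overline{B\cdot x(\sigma,\tau)}\;=\;\overline{\mathcal C_r\cap B'\,x(\sigma,\tau)\,B'},
\]
where closures may be taken in $\mathfrak{sp}_{2n}$ since the closed subvariety $\overline{\mathcal C_r}$ already contains the left-hand side. The inclusion $\overline{B\cdot x(\sigma,\tau)}\subseteq\overline{\mathcal C_r}\cap\overline{B'x(\sigma,\tau)B'}$ is clear, giving one direction of the lemma. For the reverse direction, both sides are closed $B$-stable subsets of $\overline{\mathcal C_r}$; applying Lemma~\ref{lemmaBOrbitsTypeC} to each $\mathcal C_{r'}$ with $r'\leq r$ decomposes the right-hand side as a union of $B$-orbits $B\cdot x(\sigma',\tau')$ whose rank invariants are dominated by those of $x(\sigma,\tau)$. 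The open orbit of this union is $B\cdot x(\sigma,\tau)$ itself (it has the maximal rank invariants and hence maximal dimension), and by the type~$U$ covering structure (Proposition~\ref{prop-UNT} applied through Lemma~\ref{lemmaOrder}'s analogue in type~$C$) all smaller orbits lie in its closure. Taking closure yields the claimed equality.

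The main obstacle is this last closure/irreducibility step: showing that every smaller $B$-orbit appearing in the right-hand side actually lies in $\overline{B\cdot x(\sigma,\tau)}$. In the type~$A$ proof of Lemma~\ref{lemmaEquations} this is hidden in the passage from $B\sigma w\cdot x_r=\mathbb O_r\cap B(\sigma w\cdot x_r)B$ to its closure in $\mathcal N_2$, and uses the fact that the closure of a $(B',B')$-double coset in $M_{2n}(\mathsf k)$ is irreducible together with the existence of a unique dense $B$-orbit. The same line of reasoning transports to type~$C$ provided one confirms that the covering relations among $B$-orbits of $\overline{\mathcal C_r}$ are all of type~$U$; equivalently, that no orbit-raising parabolic has degree~$2$ fiber in this setting, which follows from the minimal rank analysis of Section~\ref{L-orbit}.
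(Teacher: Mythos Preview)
Your orbit identity $B\cdot x(\sigma,\tau)=\mathcal{C}_r\cap B'x(\sigma,\tau)B'$ and the injectivity argument for $(\sigma,\tau)\mapsto B'x(\sigma,\tau)B'$ are correct and parallel the paper's reduction (the paper uses the finer identity $B\cdot x(\sigma,\tau)=B_{2n}\cdot x(\sigma,\tau)\cap\mathfrak{sp}_{2n}$ with $B_{2n}$ acting by conjugation, but the verification is the same: distinct representatives have distinct rank tables).

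The fatal gap is in the closure step. You assert that ``the covering relations among $B$-orbits of $\overline{\mathcal{C}_r}$ are all of type~$U$'' and that this ``follows from the minimal rank analysis of Section~\ref{L-orbit}.'' This is false. In type $C$ the pair $(G_E,P_E)=(C_n,P_n)$ is \emph{not} simply laced, so the minimal-rank results of Section~\ref{L-orbit} do not apply; indeed Lemma~\ref{lmmWeakOrderTypeC}(3) exhibits type~$N$ covering relations, and Example~\ref{exampleNon-normal/Non-CM} displays them explicitly for $(n,r)=(2,2)$. Consequently your argument that ``all smaller orbits lie in its closure'' collapses: with type~$N$ edges present you cannot conclude that the right-hand side is the closure of its dense $B$-orbit by walking down the weak order.

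The paper sidesteps this entirely. Once one knows $B\cdot x(\sigma,\tau)=B_{2n}\cdot x(\sigma,\tau)\cap\mathfrak{sp}_{2n}$, the description of the closure by rank inequalities is inherited from Lemma~\ref{lemmaEquations}, which is a type~$A$ statement (where the minimal-rank structure \emph{does} hold and the closure of a $B_{2n}$-conjugacy class is already known to be cut out by rank conditions). No appeal to the weak-order structure of $\mathcal{C}_r$ itself is needed. To repair your argument, replace the final paragraph by this reduction: intersect the type~$A$ closure description from Lemma~\ref{lemmaEquations} with $\overline{\mathcal{C}_r}$, rather than trying to rebuild the closure combinatorics inside $\mathfrak{sp}_{2n}$.
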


\begin{proof}
The claim will follow from Lemma \ref{lemmaEquations} once we have seen that  
$B \cdot x(\sigma , \tau ) = B_{2n} \cdot x(\sigma , \tau ) \cap \mathfrak{sp}_{2n}$, 
where $B_{2n}$ denotes the Borel subgroup of $\SL_{2n}$ containing $B$. Clearly the LHS is contained in the RHS. To see the other inclusion, use that
$x(\sigma' , \tau' ) = x(\sigma , \tau)$ if and only if $r(i,j , x(\sigma' , \tau' )) = r(i,j , x(\sigma , \tau ))$, for all $i,j$. 
\end{proof}

\begin{lemma}\label{lmmWeakOrderTypeC}
Let $\mathcal{O}(\sigma , \tau)= B.x(\sigma , \tau )$, for a pair $(\sigma , \tau ) \in W^{r} \times R_r$ and $P_i \supseteq B$ a minimal parabolic corresponding to a simple refelction $c_i \in \Delta_{C_n}$. Assume that $P_i$ raises $\mathcal{O}(\sigma , \tau)$.
Then three cases occur:
\begin{enumerate}
\item  $P_i \mathcal{O}(\sigma , \tau) = \mathcal{O}(c_i \sigma , \tau) \coprod \mathcal{O}(\sigma , \tau)$ iff $c_i \sigma \in W^{r}$.\\
This gives a covering relation of type $U$.
\item  $P_i \mathcal{O}(\sigma , \tau) = \mathcal{O}(\sigma , s_i \tau J_r s_i J_r ) \coprod \mathcal{O}(\sigma, \tau)$ iff $i \in [r-1]$ {and} $s_i \tau J_r s_i J_r \neq \tau$.\\
This gives a covering relation of type $U$.
\item $P_i \mathcal{O}(\sigma , \tau) = \mathcal{O}(\sigma , s_i \tau ) \coprod \mathcal{O}(\sigma , \tau)$ iff $i \in [r-1]$ {and} $s_i \tau J_r s_i J_r = \tau$.\\
This gives a covering relation of type $N$.
\end{enumerate}

\end{lemma}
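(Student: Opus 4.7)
The plan is to use the parabolic induction $\mathcal{C}_r \simeq \text{Sp}_{2n} \times^{P^r} \mathcal{S}_r$ established in Lemma \ref{lemmaBOrbitsTypeC} together with Brion's description of the weak order under parabolic induction \cite[Lemma 6]{brion1}. This decomposes the analysis of $P_i \mathcal{O}(\sigma,\tau)$ into two kinds of moves: raising in the flag direction $\text{Sp}_{2n}/P^r$, and raising of $B_r$-orbits inside the Levi slice $\mathcal{S}_r \simeq Y_r$.

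First I would observe that the Levi $L^r$ has Dynkin type $A_{r-1}\times C_{n-r}$ and that the simple reflections $c_{r+1},\ldots,c_n$ of the $C_{n-r}$ factor act trivially on $\mathcal{S}_r$, since the corresponding root subgroups preserve the filtration $V_r \subset V_{2n-r}$ and act only on the quotient $V_{2n-r}/V_r$, which is annihilated by every $x \in \mathcal{S}_r$. Hence the only non-trivial Levi action comes from $c_i$ with $i \in [r-1]$, corresponding to the simple reflection $s_i$ of the $\text{GL}_r$ factor. When $c_i \sigma \in W^r$, the raising is purely in the flag direction: Schubert raising in $\text{Sp}_{2n}/P^r$ lifts through the contracted product to a birational type U map, giving case $(1)$ with $P_i \mathcal{O}(\sigma,\tau) = \mathcal{O}(c_i\sigma,\tau) \coprod \mathcal{O}(\sigma,\tau)$. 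Otherwise $c_i$ must lie in $W(L^r)$, and the only option with non-trivial action is $i \in [r-1]$, so the problem reduces to how the minimal parabolic of $\text{GL}_r$ attached to $s_i$ raises the $B_r$-orbit labelled by $\tau$ on the symmetric space $Y_r \simeq \text{GL}_r/\text{O}_r$.

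For this second step I would invoke the Richardson--Springer combinatorics on $Y_r$: the action of $s_i$ on the involution parameter $\tau \in R_r$ falls into exactly two classes matching cases $(2)$ and $(3)$. When $s_i \tau J_r s_i J_r \neq \tau$ the reflection is \emph{complex}, the orbit is raised to $s_i \tau J_r s_i J_r$, and the raising map is birational (type U, case $(2)$). When $s_i \tau J_r s_i J_r = \tau$ the reflection is \emph{non-compact imaginary}, the orbit is raised to $s_i \tau$, and the raising map is generically $2$-to-$1$ (type N, case $(3)$). Lifting to $\mathcal{C}_r$ via parabolic induction preserves both the orbit update and the type of the covering.

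The main obstacle will be matching the Richardson--Springer formulas to the explicit matrix description of $x(\sigma,\tau)$ in Lemma \ref{lemmaBOrbitsTypeC}. Concretely, one has to verify directly that in case $(2)$ the action of $P_i$ on $x(\sigma,\tau)$ lands in $B \cdot x(\sigma, s_i\tau J_r s_i J_r)$, and that in case $(3)$ both $x(\sigma,\tau)$ and $\dot c_i \cdot x(\sigma,\tau)$ lie in $B \cdot x(\sigma, s_i \tau)$, the latter exhibiting the two sheets of the degree-$2$ covering required for type N by Proposition \ref{prop-UNT}. Once these identities are in hand, mutual exclusivity and exhaustiveness of the three cases follow from a length/dimension count generalizing Lemma \ref{lemmaOrder}(3): each move increases by one a length invariant of the pair $(\sigma,\tau)$, and any non-trivial raising $\mathcal{O}(\sigma,\tau) \subsetneq P_i \mathcal{O}(\sigma,\tau)$ must act either through the flag factor or through the unique non-trivial Levi reflection, which are precisely the three possibilities listed.
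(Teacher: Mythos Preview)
Your approach is essentially the same as the paper's: the authors' proof is a single sentence invoking \cite[Lemma 6]{brion1} for the parabolic-induction reduction and the Richardson--Springer/Wyser results \cite{RS1,Wyser} for the weak order on $B_r \backslash Y_r \simeq B_r \backslash (\text{GL}_r/\text{O}_r)$, which is exactly the two-step scheme you outline. Your added observations (triviality of the $C_{n-r}$ Levi factor, the complex versus non-compact imaginary dichotomy giving types $U$ and $N$) are the natural unpacking of those citations.

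One phrasing to tighten: when case~(1) fails you write ``$c_i$ must lie in $W(L^r)$'', but what Brion's lemma actually gives is $c_i\sigma=\sigma s$ for a simple reflection $s\in W(L^r)$, and it is this $s$ (not $c_i$) that acts on the $B_L$-orbit in $\mathcal S_r$. The lemma's statement that the Levi reflection is again $s_i$ (same index) is part of what has to be checked, not assumed; it comes out of the explicit matrix verification you already plan to do.
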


\begin{proof}
Follows again from \cite[Lemma 6]{brion1} and B. J. Wyser's  and Richardson-Springer's results \cite{RS1, Wyser} for the weak order on $B_r \backslash Y_r$ .
\end{proof}

A consequence of Zariski's main theorem is the following

\begin{lemma}\label{lmmSphericalZariski}\cite[Corollary 4.4.4]{survey}
Let $X$ be a $G$-spherical variety, $Y$ a $B$-orbit and $P_1 ,P_2 \supseteq B$ minimal parabolics. Assume that $P_1$ raises $Y$ to $Y_1$ with Type $U$ or $T$, $P_2$ raises $Y$ to $Y_2$ with Type $N$ and $P_2$ raises $Y_1$ to $Y_3$ with Type $U$ or $T$.
Then, the $B$-orbit closure $\overline{Y_3}$ is not normal along $\overline{Y_2}$.
\end{lemma}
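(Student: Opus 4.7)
The plan is to apply Zariski's Main Theorem to the canonical resolution associated with raising $Y_1$ by $P_2$, and exhibit a disconnected fibre above a generic point of $Y_2$.

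First, I would consider the action map
$$\pi : P_2 \times^B \overline{Y_1} \to \overline{Y_3}, \quad [p,x] \mapsto p\cdot x.$$
It factors as a closed immersion into $P_2/B \times \overline{Y_3}$ (via $[p,x] \mapsto (pB, p\cdot x)$) followed by the projection onto $\overline{Y_3}$, hence is proper. Since $P_2$ raises $Y_1$ to $Y_3$ with Type $U$ or $T$, the restriction of $\pi$ to $P_2 \times^B Y_1$ is birational onto its image (the dense subset $P_2 \cdot Y_1$ of $\overline{Y_3}$), so $\pi$ is a proper birational morphism onto $\overline{Y_3}$.

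Next, I would analyse the fibre of $\pi$ over a generic point $y \in Y_2$. Identifying $P_2/B$ with $\mathbb{P}^1$ we have
$$\pi^{-1}(y) = \{\,pB \in \mathbb{P}^1 \mid p^{-1}\cdot y \in \overline{Y_1}\,\}.$$
Since $Y_2$ is the dense $B$-orbit in $P_2 \cdot Y = Y \cup Y_2$, for every $p \in P_2$ the element $p^{-1}\cdot y$ belongs to $Y \cup Y_2$. Now $Y \subset \overline{Y_1}$, while $Y_2 \cap \overline{Y_1}$ is a $B$-stable subset of the single $B$-orbit $Y_2$, so it is empty or all of $Y_2$. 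The equalities $\dim Y_2 = \dim Y_1 = \dim Y + 1$ together with $Y_2 \neq Y_1$ force $Y_2 \not\subseteq \overline{Y_1}$; thus $Y_2 \cap \overline{Y_1} = \emptyset$ and
$$\pi^{-1}(y) = \{\,pB \mid p^{-1}\cdot y \in Y\,\}.$$
The Type $N$ hypothesis for $P_2$ raising $Y$ to $Y_2$ says precisely that the map $P_2 \times^B Y \to P_2 \cdot Y$ has generic degree $2$ over $Y_2$, so for a generic $y \in Y_2$ this fibre consists of exactly two distinct reduced points of $\mathbb{P}^1$.

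Finally, Zariski's Main Theorem (in the form: a proper birational morphism onto a normal variety has connected fibres) forces $\pi^{-1}(y)$ to be connected at every point $y$ where $\overline{Y_3}$ is normal. Having exhibited two-point fibres over a dense open subset of $Y_2$, the non-normal locus of $\overline{Y_3}$, being closed, must contain that dense open subset, hence all of $\overline{Y_2}$. The main subtlety is controlling which $B$-orbits in $P_2 \cdot y$ actually meet $\overline{Y_1}$: the only candidates are $Y$ and $Y_2$, and ruling out $Y_2$ rests on the dimensional rigidity that distinct $B$-orbits of the same dimension cannot contain one another, together with the fact that both Type $N$ raising and Type $U/T$ raising increase dimension by exactly one.
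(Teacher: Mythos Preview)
Your proof is correct and follows exactly the approach the paper indicates: the paper does not give its own proof but simply cites \cite[Corollary 4.4.4]{survey} and introduces the lemma with ``A consequence of Zariski's main theorem is the following''. Your argument via the proper birational map $\pi:P_2\times^B\overline{Y_1}\to\overline{Y_3}$ and the disconnected fibre over a generic point of $Y_2$ is the standard one from the cited reference. One small point you leave implicit is why $Y_1\neq Y_2$: if they were equal then $Y_1$ would be the dense $B$-orbit in $P_2Y$, hence $P_2Y_1=P_2Y$ and $P_2$ could not raise $Y_1$, contradicting the hypothesis; you might state this explicitly.
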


\begin{remark}\label{rmrPinPhD}
S. Pin in \cite{PinPhD} has investigated the singularities in $B_r$-orbit closures 
in the symmetric space $\text{GL}_r / \text{O}_r \simeq Y_r$. He gave a criterion for them to be regular in codimension $1$. Furthermore he found non-normal orbit closures which are regular in codimension $1$, hence examples of orbit closures which are neither normal nor Cohen-Macaulay.
\end{remark}

Likewise, in $\mathcal{C}_r$ there occur $B$-conjugacy closures with similar properties, as the next example shows.

\begin{example}\label{exampleNon-normal/Non-CM}
Let $(n,r)=(2,2)$. Then $R_2 =S_2$ and $W^2 = \left\{1, c_2 , c_1 c_2 , c_2 c_1 c_2 \right\}$. Denote by $P_i \supseteq B$ the minimal parabolic corresponding to the simple reflection $c_i$. Using Lemma \ref{lmmWeakOrderTypeC}, the covering relations are the following:
$$
\xymatrix{
& (1,1) \ar@{-}[dl]_{P_2} \ar@{=}[dr]^{P_1} & \\
(c_2 , 1 ) \ar@{-}[d]_{P_1}&  & (1, s_1 ) \ar@{-}[d]^{P_2}  \\
(c_1 c_2 , 1 ) \ar@{-}[d]_{P_2} &  & (c_2 , s_1 ) \ar@{-}[d]^{P_1}  \\
(c_2 c_1 c_2 , 1 ) \ar@{=}[dr]_{P_1} &  & (c_1 c_2 , s_1 ) \ar@{-}[dl]^{P_2}  \\
& (c_2 c_1 c_2 , s_1 ) & \\
 },
$$
\begin{center}
\textbf{Figure 2. Type $C_n$ weak order for $(n,r)=(2,2)$.}
\end{center}

where a double edge means that the covering relation is of type $N$. With Lemma \ref{lmmSphericalZariski} we find two non-normal $B$-conjugacy class closures:
$$\overline{\mathcal{O}( c_1 c_2 , 1)}= \left\{ \left(
\begin{array}{cc}
x&y\\
0&-J_2 x^t J_2 
\end{array}\right) \mid x^2 = 0 , xy=J_2 y^t J_2 \right\}$$
$$\textrm{	and } \quad \overline{\mathcal{O}( c_2 c_1 c_2 , 1)}= \left\{ z \mid z_{4,1}=0 \right\} \subseteq \mathcal{N}_2 (\mathfrak{sp}_4 ).$$
The non-normal and the singular loci agree and are in both cases given by $\overline{\mathcal{S}_2 } = \overline{\mathcal{O}(1, s_1 )} \simeq \mathbb{A}^3$.
So, $\overline{\mathcal{O}( c_1 c_2 , 1)}$ is singular in codimension $1$, whereas $\overline{\mathcal{O}( c_2 c_1 c_2 , 1)}$ is not. By Serre's criterion for normality, we may conclude that the latter is not Cohen-Macaulay.
\end{example}

\subsection{The group $\SO_{n}$}

\subsubsection{Orthogonal $2$-nilpotent elements} A $2$-nilpotent element in an orthogonal Lie algebra has even rank, hence we are dealing with $2$-nilpotent elements of rank $2s$ where $4s \leq n$. The corresponding nilpotent orbits are denoted by $\mathcal{B}_{2s}$ with the exception of the \textit{very even case} $n=4s$; then, $2$-nilpotent elements of rank $2s$ form a $\text{O}_n$-conjugacy class, which is the union of two $\text{SO}_n$-conjugacy classes. We treat this case later on.\\

We will use $\mathcal{K}_{2s} = \{ x \in \mathcal{B}_{2s} \mid \text{Im}(x) = V_{2s} \subseteq V_{n-2s} = \text{Ker}(x) \}$ and $P^s$, the parabolic subgroup of $\text{SO}_n$ given as the stabilizer of the partial flag $V_{2s} \subseteq V_{n-2s}$. The $P^s$-action on $\mathbb{K}_s$ factors through the morphism $P^s \to L^s$ ($L^s$ denoting the Levi factor of $P^s$) and we have that $\mathcal{B}_{2s}$ is obtained by parabolic induction on the $L^s$-variety $\mathcal{K}_{2s}$, since the multiplication map
$$\text{SO}_n \times^{P^s} \mathcal{K}_{2s} \to \mathcal{B}_{2s} , \quad [p,x] \mapsto p.x$$
is an isomorphism. Denote by $F_s$ the set of permutations $\tau \in S_{2s}$ such that $J_{2s}\tau$ is a fixed-point free involution, and by $W^s$ the set of minimal length representatives of $W(L^s )$ right cosets in $W(\text{SO}_n )$.
For a permutation $\rho \in S_{f}$ we will regard the set $\text{def}(\rho ) = \{i \in [f] \mid \rho (i) + i > f+1 \}$.

\begin{lemma}\label{lmmBOrbitsOrthogonal} The set consisting of the elements
$$y(\sigma ,  \tau ) := - \sum_{i \in \text{def}(\tau ) } E_{\sigma (n+1-\tau(i)) , \sigma (n-2s+i)} + \sum_{i \notin \text{def}(\tau)} E_{\sigma (n+1-i) , \sigma(n-2s + \tau (i))},$$
with $(\sigma , \tau) \in W^s \times F_s$, parametrizes Borel conjugacy classes in $\mathcal{B}_{2s}$.
\end{lemma}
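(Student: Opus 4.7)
The plan is to transport the strategy of Lemma \ref{lemmaBOrbitsTypeC} to the orthogonal setting. The outline has three parts: identify $\mathcal{K}_{2s}$ as a symmetric space for the Levi $L^s$, invoke a Richardson--Springer classification to handle the inner orbits, and then combine this with \cite[Lemma 6]{brion1} to account for the parabolic induction $\mathcal{B}_{2s} \simeq \SO_n \times^{P^s} \mathcal{K}_{2s}$.

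The first step is to describe $\mathcal{K}_{2s}$ explicitly. An element $x \in \mathcal{K}_{2s}$ has $\im(x) \subseteq V_{2s}$ and $\Ker(x) \supseteq V_{n-2s}$, hence as a matrix it is supported in the upper-right $2s \times 2s$ block. If $h$ denotes this block, the orthogonal condition $x^T = -J_n x J_n$ translates into the relation $h^T = -J_{2s} h J_{2s}$, i.e. $J_{2s} h$ is antisymmetric in the usual sense. The assignment $x \mapsto h$ then gives an $L^s$-equivariant isomorphism between $\mathcal{K}_{2s}$ and the variety $Y_s := \{ h \in \textrm{GL}_{2s}(\kk) \mid h^T = -J_{2s} h J_{2s} \}$, where $L^s$ acts through its $\textrm{GL}_{2s}$-factor via $g.h = g\, h\, J_{2s} g^T J_{2s}$. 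The space $Y_s$ is the symmetric space $\textrm{GL}_{2s}/\Sp_{2s}$ of nondegenerate skew-symmetric forms on $\kk^{2s}$.

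The second step is to apply the Richardson--Springer classification \cite{RS1}: $B_{2s}$-orbits on $\textrm{GL}_{2s}/\Sp_{2s}$ are in bijection with fixed-point free involutions in $S_{2s}$, i.e. with the set $F_s$ via $\tau \mapsto J_{2s} \tau$. Combined with \cite[Lemma 6]{brion1} applied to the $\SO_n$-parabolic induction $\mathcal{B}_{2s} \simeq \SO_n \times^{P^s} \mathcal{K}_{2s}$, this immediately yields that the $B$-orbits on $\mathcal{B}_{2s}$ are parametrized by the product $W^s \times F_s$.

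The third step is to make the representatives explicit. For each $\sigma \in W^s$ choose a lift $\dot\sigma \in N(T) \cap \SO_n$, and let $x_\tau$ denote the standard Richardson--Springer representative on $\mathcal{K}_{2s}$ associated with $\tau \in F_s$. Then $y(\sigma, \tau)$ is obtained as $\dot\sigma . x_\tau$. Because the lifts $\dot\sigma$ must respect the bilinear form defining $\SO_n$, certain basis vectors get multiplied by signs when passing through $\dot\sigma$; these signs are precisely the ones captured by the set $\textrm{def}(\tau)$ in the stated formula. The main technical obstacle is precisely this bookkeeping in the third step: one must verify that the chosen orthogonal Weyl representatives, when conjugating the Richardson--Springer representatives, produce exactly the signed formula displayed, and that distinct pairs $(\sigma, \tau) \in W^s \times F_s$ indeed yield distinct $B$-orbits (which follows at once from \cite[Lemma 6]{brion1} once the identification is in place). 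The identification in step one and the combinatorial bijection in step two are entirely parallel to the symplectic case already handled in Lemma \ref{lemmaBOrbitsTypeC}.
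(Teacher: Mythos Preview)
Your approach is essentially identical to the paper's: identify $\mathcal{K}_{2s}$ with the symmetric space $\mathrm{GL}_{2s}/\mathrm{Sp}_{2s}$ via the upper-right block, invoke Richardson--Springer (and Wyser) to parametrize its $B_{2s}$-orbits by $F_s$, and then apply \cite[Lemma~6]{brion1} to the parabolic induction. One small correction to your third step: the signs governed by $\mathrm{def}(\tau)$ do not arise from the Weyl lifts $\dot\sigma$ (which in the paper are taken to be ordinary permutation matrices for $n$ even, and carry at most a single sign on the middle basis vector $e_{r+1}$ for $n$ odd---never affecting the $2s\times 2s$ block), but rather are already present in the Richardson--Springer representative $\tilde\tau\in Z_{2s}$ itself, being forced by the skew condition $h^T=-J_{2s}hJ_{2s}$.
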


\begin{proof}
This is similar to the proof of Lemma \ref{lemmaBOrbitsTypeC}. The morphism which maps an element of $\mathfrak{so}_{n}$ to its upper-right-hand $(2s\times 2s)$-block induces an equivariant isomorphism $\varphi : \mathcal{K}_{2s} \to Z_{2s}$ between the $L^s$-variety $\mathcal{K}_{2s}$ and the $\text{GL}_{2s}$-variety $Z_{2s} =\{ h \in \text{GL}_{2s} \mid h^T = - J_{2s} h J_{2s} \}$, where $\text{GL}_{2s}$ acts via
$$g.h = gh J_{2s} g^T J_{2s} , \quad g \in \text{GL}_{2s} , h \in Z_{2s}.$$
By \cite{RS1, Wyser} $B_{2s}$-orbits in the symmetric space $Y_{2s}$ are indexed by $F_s$ as follows: An element $\tau \in F_s$ is identified with the $B_{2s}$-orbit through
$$\tilde{\tau} :=  - \sum_{i \in \text{def}(\tau ) } E_{2s+1-\tau(i) , i} + \sum_{i \notin \text{def}(\tau)} E_{2s+1-i , \tau (i)} \in Z_{2s}.$$
If $n = 2r$ is even, the Weyl group $W_{D_r}$ consists of even permutations, hence $\sigma \in W^s$ is represented by the ordinary permutation matrix $\dot\sigma \in N(T)$. If $n = 2r+1$ and $\sigma \in W^s$ is not an even permutation, we choose the representative given by
$$\dot\sigma ( e_i ) = \left\{
\begin{array}{rl}
e_{\sigma (i)} & \text{, if } i \neq r+1 \\
-e_{r+1} & \text{, if }i = r+1
\end{array}
\right\}.$$
In any case we compute that $\dot\sigma . \varphi^{-1}(\tilde{\tau}) = y(\sigma , \tau )$. Now, the claim follows from \cite[Lemma 6]{brion1}, as $\mathcal{B}_{2s}$ is obtained from $Z_{2s}$ by parabolic induction.   
\end{proof}

It remains to investigate the very even $2$-nilpotent case, i.e. the case where $n = 4s$. Then $\mathcal{V}= \{ x \in \mathfrak{so}_n \mid \text{Im}(x)=\text{Ker}(x) \}$
is a $\text{O}_n$-conjugacy class which consists of two $\text{SO}_n$-conjugacy classes. Denote by $m \in \text{O}_n$ the 
permutation matrix switching $2s$ with $2s+1$. Then $\text{SO}_n$ and $m \text{SO}_n$ are the connected components of $\text{O}_n$. We conclude that $\mathcal{V}$ is the union of the two $\text{SO}_n$-conjugacy classes 
$\mathcal{B}_{2s} \simeq \text{SO}_n \times^{P^s} \mathcal{K}_{2s}$ and $\mathcal{B}_{2s}^{'} = m. \mathcal{B}_{2s}$. Now $m B m= B$, whence with Lemma \ref{lmmBOrbitsOrthogonal} we see the elements $m. y(\sigma , \tau)$, where $(\sigma , \tau ) \in W^s \times F_s$, parametrize $B$-conjugacy classes in $\mathcal{B}_{2s}^{'}$.

\begin{cor}\label{corBDMinimalRank}
$\mathcal{N}_2 (\mathfrak{so}_n ) := \left\{ x \in \mathfrak{so}_n \mid x^2 = 0 \right\}$ is a $\text{SO}_n$-spherical variety of minimal rank. 
\end{cor}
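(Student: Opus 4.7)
The plan is to reduce minimal rank of $\mathcal{N}_2(\so_n)$ to that of each individual $\SO_n$-orbit, transport the question to an $L$-orbit via parabolic induction, and then invoke the simply-laced criterion from Section \ref{induction}.

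First, for any minimal parabolic $P \supseteq B$ and any $B$-orbit $Y \subseteq \mathcal{N}_2(\so_n)$, the set $PY$ lies in the $\SO_n$-orbit of $Y$, so the weak Bruhat order decomposes as a disjoint union of weak orders on the individual $\SO_n$-orbits. Minimal rank of $\mathcal{N}_2(\so_n)$ is therefore equivalent to minimal rank of each orbit $\mathcal{B}_{2s}$ (and, in the very even case $n = 4s$, of $\mathcal{B}'_{2s}$, which is isomorphic to $\mathcal{B}_{2s}$ via conjugation by $m$ and hence has the same weak-order structure).

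Second, the parabolic induction $\mathcal{B}_{2s} \simeq \SO_n \times^{P^s} (L^s \cdot x_0)$ from Section \ref{induction} together with \cite[Lemma 6]{brion1} reduces minimal rank of $\mathcal{B}_{2s}$ as a $\SO_n$-variety to minimal rank of the dense $L^s$-orbit $L^s \cdot x_0 \subseteq \mathcal{K}_{2s}$ as an $L^s$-variety.

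Third, I would invoke the lemma from Section \ref{induction} asserting that the dense $L$-orbit in $\g(2)$ is of minimal rank whenever $G_E$ is simply laced. The remaining task is to identify $G_E$ intrinsically: for a rank-$2s$ element $x \in \so_n$ with $x^2 = 0$, one can take the $\mathfrak{sl}_2$-triple $(x,h,y)$ inside $\so(V') \simeq \so_{4s}$ where $V' \subseteq \kk^n$ is the non-degenerate $4s$-dimensional subspace $\im(x) + \im(y)$. Writing $V''$ for the orthogonal complement, one has $G_E \simeq \SO(V') \times \SO(V'')$; the second factor centralizes the triple and acts trivially on $\g(2) \simeq M^a_{2s}(\kk)$, so it plays no role in the $L^s$-action. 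The effective $G_E$ is $\SO_{4s}$ of type $D_{2s}$, which is simply laced, so the criterion applies.

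The main obstacle is handling the type-$B$ case ($n$ odd) uniformly: the shortcut in Section \ref{induction} that passes from $G$ simply laced to $G_E$ simply laced via a sub-Dynkin diagram is unavailable when $G$ has type $B_r$. The intrinsic description of $G_E$ above sidesteps this and treats both parities of $n$ at once.
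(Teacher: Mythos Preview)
Your proposal is correct and shares the first two reduction steps with the paper (decompose $\mathcal{N}_2(\so_n)$ into $\SO_n$-orbits; use parabolic induction via \cite[Lemma 6]{brion1} to reduce to the dense $L^s$-orbit in $\mathcal{K}_{2s}$). The difference lies in how the minimal-rank property of that $L^s$-orbit is established. The paper uses the explicit identification $\mathcal{K}_{2s} \simeq Z_{2s} \simeq \text{GL}_{2s}/\text{Sp}_{2s}$ already set up in the proof of Lemma \ref{lmmBOrbitsOrthogonal} and then cites Richardson--Springer \cite{RS1} directly for the fact that this symmetric space is of minimal rank. You instead compute $G_E$ intrinsically from the $\mathfrak{sl}_2$-triple, obtain $G_E \simeq \SO_{4s} \times \SO_{n-4s}$, discard the second factor as acting trivially on $\g(2)$, and invoke the simply-laced criterion of Section~\ref{induction} (hence ultimately \cite{ressayre}). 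Your route is slightly longer but has the merit of staying entirely within the machinery the paper has already built, and your observation that the type-$B$ factor $\SO(V'')$ acts trivially is exactly the ``mod out factors acting trivially'' step that produces case~5 of Table~1; the paper's route is shorter because the identification with $Z_{2s}$ is already available at this point in the text.
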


\begin{proof}
For $n$ not a multiple of $4$, the non-zero $\text{SO}_n$-orbits in $\mathcal{N}_2 (\mathfrak{so}_n )$ are  $\mathcal{B}_{2s}$, where $1 \leq 4s < n$. These are spherical varieties of minimal rank as they are obtained by parabolic induction on the $L^s$-spherical varieties $Z_{2s}$, which are of minimal rank (cf. \cite{RS1}).
For $n = 4s$, in addition one has the $\text{SO}_n$-orbit $\mathcal{B}_{2s}^{'}$ which is $B$-equivariantly isomorphic to $\mathcal{B}_{2s}$. Hence $\mathcal{N}_2 (\mathfrak{so}_n )$ has the asserted property.  
\end{proof}

\begin{cor}\label{thmRationalOrthogonal}
Any $B$-orbit closure $Y$ of a $2$-nilpotent orbit in $\mathfrak{so}_n $ is normal and has a rational resolution of singularities outside of $\text{SO}_n$-orbits in $Y$. 
\end{cor}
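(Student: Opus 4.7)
The plan is to mirror the proof of Theorem \ref{thm-origin}, replacing the simply-laced hypothesis by the minimal rank property that has just been established in the orthogonal setting. Recall that in Theorem \ref{thm-origin} the role of the simply-laced assumption was solely to force, via the case analysis of Section \ref{induction}, each height $2$ nilpotent $G$-orbit to be of minimal rank. For $\mathfrak{so}_n$ this input is provided directly, and uniformly in the parity of $n$, by Corollary \ref{corBDMinimalRank}.

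Concretely, I would fix a $2$-nilpotent element $x \in \mathfrak{so}_n$ and set $Y = \overline{B \cdot x}$. By Corollary \ref{corBDMinimalRank} the variety $\mathcal{N}_2(\mathfrak{so}_n)$ is $\SO_n$-spherical of minimal rank, so in particular the $\SO_n$-orbit closure $\overline{\SO_n \cdot x}$ is of minimal rank, and hence multiplicity-free as a $B$-variety. Consequently $Y$ is a multiplicity-free $B$-subvariety of $\overline{\SO_n \cdot x}$.

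Then I would invoke Brion's \cite[Theorem 2]{brion2} to conclude that $Y$ is normal away from the proper $\SO_n$-orbits it contains; the closing remarks of Section 3 of \emph{loc.\ cit.} upgrade this to a rational resolution of singularities on the same open locus. In the very even case $n = 4s$, the second $\SO_n$-conjugacy class $\mathcal{B}_{2s}'$ equals $m \cdot \mathcal{B}_{2s}$ for the element $m \in \mathrm{O}_n$ introduced in Subsection 7.4, and since $m B m = B$, the $B$-orbit closures in $\mathcal{B}_{2s}'$ are isomorphic (via multiplication by $m$) to those in $\mathcal{B}_{2s}$, so the previous argument covers this remaining case without change.

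I do not anticipate a serious obstacle here: all the nontrivial input has been deposited upstream, namely minimal rank via Corollary \ref{corBDMinimalRank} and Brion's general theorem on multiplicity-free $B$-subvarieties. The one point to watch is the very even case, where one must make sure that the two $\SO_n$-components are handled symmetrically via the normaliser element $m$; once that is acknowledged the result is a formal application of the same machinery that proved Theorem \ref{thm-origin}.
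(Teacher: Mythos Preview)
Your proposal is correct and takes essentially the same approach as the paper: the paper's proof says only ``Follows from the above statement and Remark \ref{rem-proof}'', which unpacks precisely to your argument---use Corollary \ref{corBDMinimalRank} to get minimal rank, then run the proof of Theorem \ref{thm-origin} (i.e.\ Brion's multiplicity-free theorem \cite[Theorem 2]{brion2}). Your separate paragraph on the very even case is harmless but superfluous, since Corollary \ref{corBDMinimalRank} already treats $\mathcal{B}_{2s}'$ explicitly.
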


\begin{proof} 
Follows from the above statement and Remark \ref{rem-proof}.
\end{proof}

\begin{remark}
We believe that the above statement is also true for the $\text{SO}_n$-orbits in $Y$.
\end{remark}

\subsubsection{$3$-nilpotent element}\label{3Nilp}
We are dealing with the orbits 
$$\mathcal{O}_n = \{ z \in \mathfrak{so}_n \mid \text{rg}(z)=2,\quad \text{rg}(z^2)=1 ,\quad z^3 = 0 \}.$$
We assume that $n \geq 4$, as we already treated the case $\mathfrak{so}_3 \simeq \mathfrak{sl}_2$. We fix $x:= E_{1,2} + E_{1,n-1} - E_{2,n} - E_{n-1,n} \in\mathcal{O}_n$. A $\mathfrak{sl}_2$-triple is then given by
$\{x,y,h\}$, where $y:= E_{2,1} + E_{n-1,1} - E_{n,2} - E_{n,n-1}$ and $h: = [x,y] = 2 E_{1,1} - 2 E_{n,n}$. One computes that the $\ad_h$-eigenspace decomposition is 
$$\mathfrak{so}_n (0) = \left\{ b \in \mathfrak{so}_n \mid b_{1,2}= \ldots = b_{1,n-1} = b_{2,1}= \ldots = b_{n-1,1}=0 \right\} \simeq \mathfrak{so}_{n-2} \oplus \kk,$$
$$\mathfrak{so}_n (2) = \bigoplus_{j=2}^{n-1} k \cdot (E_{1,j} - E_{j,n}), \quad
\mathfrak{so}_n (-2) = \bigoplus_{j=2}^{n-1} k \cdot (E_{j,1}-E_{n,n+1-j}).$$
Define $F_n := \mathfrak{so}_n (2) \cap \mathcal{O}_n$. Denote by $P \supseteq B$ the maximal parabolic subgroup corresponding to the first simple reflection in $\Delta_{\text{SO}_n }$ if $n>4$ and $P=B$ if $n=4$. The Levi factor $L \simeq k^* \times \text{SO}_{n-2}$ is the connected subgroup of $\text{SO}_n$ with Lie algebra $\mathfrak{so}_n (0)$. Further, $P$ acts on $F_n$ by conjugation and this action factors through the map $P \to L$. One may then consider the map
$$\text{SO}_n \times^P F_n \to \mathcal{O}_n , \quad [g,z] \mapsto g.z.$$
The centralizer of $x$ is contained in $P$. Hence this map is an isomorphism. Therefore, $\mathcal{O}_n$ is obtained by parabolic induction on $F_n$, which identifies $L$-equivariantly with
$$U_{n-2} = \left\{ x \in k^{n-2} \mid x^T J_{n-2} x \neq 0 \right\},$$
upon which $L$ acts transitively via 
$$(\lambda , g).x = \lambda g(x),  \quad (\lambda , g ) \in L, x \in U_{n-2}.$$
The $B \cap L$-orbits in $U_{n-2}$ are given by
$$O_i := \left\{ \left(
\begin{array}{c}
x_1 \\
\vdots\\
x_{n-2} 
\end{array} \right)
 \in U_{n-2} \mid x_{n-1-i} \neq 0 , x_{n-i}= \ldots = x_{n-2} = 0 \right\},$$
where $i \in [r-1]$ for $n=2r$ even and $i \in [r]$ for $n=2r+1$ odd. Further, one computes that
$$\text{rank}(O_i )= 
\left\{
\begin{array}{rl}
1 & \text{ if } i=r\\
2 & \text{ if } i<r   
  \end{array}\right\}.$$
We derive that $B \cap L$-orbits in $F_n$ are parametrized by the set 
$$I_n = \left\{ f_i := E_{1,i+1} + E_{1,n-i} - E_{n-i,n} - E_{i+1,n} \mid i \in [r-1] \right\},$$
if $n=2r$ is even, and $I_n = \left\{ f_1 , \ldots , f_{r-1}, f_r := E_{1,r+1} - E_{r+1,n} \right\}$, if $n=2r+1$ is odd.
We have obtained 
\begin{lemma}\label{lmmBOrbits3}
The $B$-orbits in $\mathcal{O}_n$ are parametrized by $W^P \times I_n$. For $(\sigma , f_i ) \in W^P \times I_n$ the corresponding $B$-orbit is of rank $2$ if and only if $i<r$. For $i=r$ the rank is equal to $1$. In particular, $\mathcal{O}_n$ is of minimal rank if and only if $n$ is even. 
\end{lemma}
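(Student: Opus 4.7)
The plan is to deduce the three assertions from the parabolic induction $\mathcal{O}_n \simeq \mathrm{SO}_n \times^P F_n$ and the $L$-equivariant isomorphism $F_n \simeq U_{n-2}$ established just above the statement, combined with \cite[Lemma 6]{brion1}. Applying that lemma to the induced variety identifies its $B$-orbits with pairs $(\sigma, \mathcal{O}')$, where $\sigma \in W^P$ and $\mathcal{O}'$ is a $B \cap L$-orbit in $F_n$. The $B \cap L$-orbits in $F_n$ correspond, via $F_n \simeq U_{n-2}$, exactly to the orbits $O_i$ enumerated above, whose natural representatives in $U_{n-2}$ can be taken to be $e_i + e_{n-1-i}$ for $i < r$ and $e_r$ for $i = r$ (the latter only when $n = 2r+1$); transporting these back to $\g(2)$ through the basis $\{E_{1,j}-E_{j,n}\}_{j=2}^{n-1}$ of $\mathfrak{so}_n(2)$ yields precisely the displayed formulas for $f_i$, so the $B$-orbits in $\mathcal{O}_n$ are indexed by $W^P \times I_n$.

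For the rank assertion, I would again invoke \cite[Lemma 6]{brion1}: parabolic induction preserves the rank of $B$-orbits, so the rank of $B \cdot [\dot\sigma, f_i] \subseteq \mathrm{SO}_n \times^P F_n$ equals the rank of $O_i$ in $U_{n-2}$. That rank was already computed to be $2$ when $i < r$ and $1$ when $i = r$, giving the claim.

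For the last assertion, type $U$ covering relations preserve the rank of $B$-orbits, so in a $G$-variety of minimal rank (Definition \ref{dfnMinimalRank}) all $B$-orbits share a common rank, equal to that of the dense $B$-orbit. If $n = 2r$ is even, every $B$-orbit of $\mathcal{O}_n$ has rank $2$, and since $\mathrm{SO}_{2r}$ is simply laced of type $D_r$, minimal rank follows from the simply laced corollary established earlier (applied to the height $2$ orbit $\mathcal{O}_n$). If $n = 2r+1$ is odd, then $B \cdot f_r$ has rank $1$ while the dense $B$-orbit has rank $2$, and any chain of raisings joining them must contain a covering relation which strictly increases the rank, hence cannot be of type $U$; so $\mathcal{O}_n$ is not of minimal rank.

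The main technical point I foresee is the explicit bookkeeping in step one: following the natural representatives of the $O_i$ through the chain $U_{n-2} \simeq F_n \hookrightarrow \g(2) \subset \mathfrak{so}_n$ to recover the precise matrix form of $f_i$. Beyond this coordinate calculation, the lemma follows directly from the parabolic induction apparatus already developed, together with the standard fact that parabolic induction preserves $B$-orbit ranks.
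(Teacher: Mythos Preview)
Your proposal is correct and follows essentially the same approach as the paper: the lemma is stated there as a summary (``We have obtained'') of the preceding discussion, which establishes the parabolic induction $\mathcal{O}_n \simeq \mathrm{SO}_n \times^P F_n$, identifies $F_n \simeq U_{n-2}$, lists the $B\cap L$-orbits $O_i$ with their ranks, and then invokes \cite[Lemma 6]{brion1}. The only minor difference is in the last assertion: for even $n$ you appeal to the simply-laced corollary (valid, since $\mathrm{SO}_{2r}$ has type $D_r$ and $\mathcal{O}_n$ has height $2$), whereas the paper leaves the ``in particular'' implicit, presumably via the same parabolic-induction machinery; both routes are short and equivalent.
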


\begin{cor}\label{corNormalB_2r}
Any $B$-orbit closure of $\overline{\mathcal{O}_n}$ admits a rational resolution of singularities, if $n$ is even. 
\end{cor}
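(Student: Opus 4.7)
The plan is to mirror the proof of Lemma \ref{lemmaMinOrbit}(1): combine the minimal rank property established in Lemma \ref{lmmBOrbits3} with the Frobenius splitting of Corollary \ref{cor-FS}, and conclude via \cite[Proposition 1.2.5]{BK}.

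First, because $n$ is even, Lemma \ref{lmmBOrbits3} asserts that every $B$-orbit in $\mathcal{O}_n$ has rank $2$; in particular $\mathcal{O}_n$ has no edge of type $N$ and is of minimal rank. Invoking Remark \ref{rem-proof} (the minimal-rank extension of Corollary \ref{cor-resol3}), any $B$-orbit closure $Y \subseteq \overline{\mathcal{O}_n}$ carries a birational projective morphism
\[
f : P_1 \times^B \cdots \times^B P_m \times^B Y_0 \to Y
\]
with connected fibers, where the $P_i$ are minimal parabolics and, by Proposition \ref{prop-min-ev}, $Y_0$ is a vector subspace of $\mathfrak{so}_n(2)$. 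Smoothness of the source then upgrades $f$ to an honest resolution of singularities.

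Second, since $\mathcal{O}_n$ consists of elements of matrix rank $2$ and of height $2$ (as computed in Section \ref{3Nilp}), Corollary \ref{cor-FS} provides a $B$-canonical Frobenius splitting of $\overline{\mathcal{O}_n}$ which compatibly splits every $B$-orbit closure contained in it. In particular $Y$ is Frobenius split.

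Finally, applying \cite[Proposition 1.2.5]{BK} to the resolution $f$ with connected fibers together with this compatible Frobenius splitting yields simultaneously that $Y$ is normal and that $f$ is a rational resolution of singularities. No single step is an obstacle: the genuinely new ingredients (minimal rank and Frobenius splitting compatible with every $B$-orbit) were proved earlier, and the present result is their direct combination in the orthogonal $3$-nilpotent setting. The parity hypothesis $n$ even enters exactly where it should, namely to exclude the rank $1$ $B$-orbits (and the type $N$ edges they would produce) visible in Lemma \ref{lmmBOrbits3} for $n$ odd.
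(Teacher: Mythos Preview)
Your overall strategy is the same as the paper's, and the Frobenius splitting step via Corollary~\ref{cor-FS} is fine. However, there is a gap in the resolution step: you only check that $\mathcal{O}_n$ itself is of minimal rank, but the closure $\overline{\mathcal{O}_n}$ decomposes as $\mathcal{O}_n \amalg \mathcal{O}_{\mathrm{min}} \amalg \{0\}$, and the connected-fibers assertion you need requires \emph{every} $G$-orbit of $\overline{\mathcal{O}_n}$ to be of minimal rank. Remark~\ref{rem-proof} (and Corollary~\ref{cor-resol3} it refers to) only gives you a birational projective morphism; the connectedness of the fibers comes from the last clause of Lemma~\ref{lemma-resol}, whose hypothesis is precisely that all $G$-orbits of the ambient variety are of minimal rank. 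Concretely, when your sequence of minimal parabolics raises a $B$-orbit lying inside $\mathcal{O}_{\mathrm{min}}$, you need to know that no type~$N$ edge occurs there either; otherwise the fiber could be two points and \cite[Proposition~1.2.5]{BK} would not apply. Moreover, if the dense $B$-orbit of $Y$ already sits in $\mathcal{O}_{\mathrm{min}}$, your argument as written does not even produce a resolution for $Y$.

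The fix is exactly what the paper does: invoke Lemma~\ref{lemmaMinOrbit} to see that $\mathcal{O}_{\mathrm{min}}$ is of minimal rank as well (it consists of rank-$1$ elements, parabolically induced from a one-dimensional $\mathfrak{g}(2)$). With both $\mathcal{O}_n$ and $\mathcal{O}_{\mathrm{min}}$ of minimal rank, Lemma~\ref{lemma-resol} gives the resolution with connected fibers for every $B$-orbit closure in $\overline{\mathcal{O}_n}$, and your conclusion then follows.
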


\begin{proof} We have the decomposition into $\text{SO}_n$-orbits 
$$\overline{\mathcal{O}_n} = \mathcal{O}_n \coprod \mathcal{O}_\mathrm{min} \coprod \left\{0\right\}.$$
$\mathcal{O}_n$ consists of elements of rank $2$ and is of minimal rank (Lemma \ref{lmmBOrbits3}). $\mathcal{O}_\mathrm{min}$ consists of elements of rank $1$ and is of minimal rank (Lemma \ref{lemmaMinOrbit}). We conclude that any $B$-orbit closure of $\overline{\mathcal{O}_n}$ is Frobenius split (Corollary \ref{cor-FS}) and admits a resolution with connected fibers (Remark \ref{rem-proof}). Using \cite[Proposition 1.2.5]{BK}, the assertion follows. 
\end{proof}

\begin{cor}\label{corNonNormalB_r} For $n = 2r+1 \geq 5$ the $B$-orbit $Y$ corresponding to $(b_2 b_1 , f_r) \in W^P \times I_n$
has non-normal closure. 
\end{cor}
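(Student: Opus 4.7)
I would apply the spherical Zariski lemma (Lemma \ref{lmmSphericalZariski}) to a diamond in the weak order on $B \backslash \mathcal{O}_n$ whose upper vertex is $Y$. The underlying reason non-normality appears is that for $n = 2r+1$ odd, Lemma \ref{lmmBOrbits3} shows $\mathcal{O}_n$ is not of minimal rank: the orbits $B \cdot y(\sigma, f_r)$ have rank $1$, while $B \cdot y(\sigma, f_i)$ for $i < r$ has rank $2$. Hence any raising in the second factor from $f_r$ to some higher $f_i$ is a type-$N$ covering by Proposition \ref{prop-UNT}, and it is exactly such an edge that will drive the non-normality.

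Take $Y_0 = B \cdot y(e, f_r)$, $Y_1 = B \cdot y(b_1, f_r)$, $Y_2 = B \cdot y(e, f_{r-1})$, $Y_3 = Y = B \cdot y(b_2 b_1, f_r)$, together with the minimal parabolics $P_1 = P_{b_1}$ and $P_2 = P_{b_2}$. Arguing as in the proof of Lemma \ref{lmmBOrbits3}, $P_{b_1}$ raises $Y_0$ to $Y_1$ with type $U$ via the first factor (length $0 \to 1$ in $W^P$), and $P_{b_2}$ raises $Y_1$ to $Y_3$ with type $U$ via the first factor (length $1 \to 2$, since $b_2 b_1 \in W^P$). The remaining requirement is that the same parabolic $P_{b_2}$ raise $Y_0$ to $Y_2$ with type $N$, which amounts to showing that the action of $P_{b_2} \cap L$ on $F_n \simeq U_{n-2}$ sends the rank-$1$ orbit $O_r$ onto the rank-$2$ orbit $O_{r-1}$; the rank jump then forces this covering to be of type $N$. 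Granted the three relations, Lemma \ref{lmmSphericalZariski} yields that $\overline{Y} = \overline{Y_3}$ is not normal along $\overline{Y_2}$.

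The main obstacle is verifying the third raising. For $r = 2$ one has $b_2 = b_r$, and the check reduces to observing that the unique nontrivial minimal parabolic of $L \simeq \kk^{*} \times \operatorname{SO}_3$ is transitive on $U_{n-2} \setminus O_r$, giving the required degree-two map and thus type $N$; here the argument is completely parallel to Example \ref{exampleNon-normal/Non-CM} in the symplectic case. For $r \geq 3$ the simple reflection $b_2$ identifies with a particular simple reflection of $W(L) = W(B_{r-1})$, and one must verify by a direct root-theoretic computation on the stratification $\{O_i\}$ of $U_{n-2}$ that the corresponding minimal parabolic of $L$ still effects a type-$N$ raising of $O_r$; should the naive identification fail, one instead exhibits the required type-$N$ edge by a modified diamond using the parabolic $P_{b_r}$, propagating non-normality to $\overline{Y}$ via a local slice argument at $\overline{Y_0}$ analogous to the non-Cohen-Macaulay computation in Example \ref{exampleNon-normal/Non-CM}. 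The remaining items — lengths and coset representatives in $W(B_r)$ and $W^P$, and the type-$U$ character of the first-factor raisings — are routine.
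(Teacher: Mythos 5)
Your proposal reproduces the paper's own argument: the same diamond $Y_0 = (1, f_r)$, $Y_1 = (b_1, f_r)$, $Y_2 = (1, f_{r-1})$, $Y_3 = (b_2 b_1, f_r)$ with $P_1$ and $P_2$ the minimal parabolics of $b_1$ and $b_2$, and the same appeal to Lemma~\ref{lmmSphericalZariski}. You correctly isolate the one step that actually requires verification, namely that $P_2$ raises $Y_0$ to $Y_2$ with type $N$, and your caution there turns out to be justified: this step fails for $r \geq 3$.

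Under the embedding $W(L) = W(B_{r-1}) \hookrightarrow W(B_r)$, the reflection $b_2$ is the \emph{first} (long-root) simple reflection of $W(B_{r-1})$, and in the standard $\SO_{n-2}$-action on $\kk^{n-2}$ the corresponding minimal parabolic of $L$ involves only coordinates $1,2$ and $n-3,\,n-2$. Since the closed $B_L$-orbit $O_r$ is supported in coordinates $1,\ldots,r$ and $n-3 = 2r-2 > r$ once $r \geq 3$, this parabolic stabilizes $O_r$, so $P_2$ does not raise $Y_0 = (1, f_r)$ at all; there is no edge to label $N$. The type-$N$ raising of $O_r$ to $O_{r-1}$ is effected instead by the short-root parabolic $P_{b_r}$; but $b_1$ and $b_r$ commute for $r \geq 3$, so $P_{b_r}$ also raises $Y_1 = (b_1, f_r)$ to $(b_1, f_{r-1})$ with type $N$ rather than $U$ or $T$, and hence no admissible diamond of Lemma~\ref{lmmSphericalZariski} can be formed with apex $Y_3 = (b_2 b_1, f_r)$. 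Your fallback (a ``modified diamond using $P_{b_r}$'' together with an unspecified ``local slice argument'') runs straight into this same obstruction and is not actually carried out. For $r = 2$ everything is fine ($b_2 = b_r$, as you note, and the explicit computation in Example~\ref{exm} is done precisely for $n = 5$); for $r \geq 3$ neither your proposal nor the paper's two-line proof establishes the claim, and a genuinely different argument --- or a restriction of the statement to $r = 2$ --- would be needed.
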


\begin{proof}
The minimal $B$-orbit $Y_0$ corresponds to $(1,f_r)$. The minimal parabolic $P_1$ raises $Y_0$ to $Y_1$ with type $U$, and the minimal parabolic $P_2$ raises $Y_1$ to $Y$ with type $U$. As $P_2$ raises $Y_0$ with type $N$, the claim follows from Lemma \ref{lmmSphericalZariski}. 
\end{proof}
 
\begin{example}\label{exm}
We describe the non-normal example from Corollary \ref{corNonNormalB_r} more explicitly. It is $n= 2r+1 \geq 5$. One has $I_r \supseteq \{ f_{r-1} , f_{r} \}$ and $W^P \supseteq \{ 1, b_1 ,b_2 b_1 \}$. The minimal parabolics corresponding to $b_1$ and $b_2$ we denote by $P_1$ and $P_2$.
$$
\xymatrix{
& (1,f_r) \ar@{-}[dl]_{P_1} \ar@{=}[dr]^{P_2} & \\
(b_1 ,f_r ) \ar@{-}[d]_{P_2}&  & (1, f_{r-1} ) \ar@{-}[d]^{P_1}  \\
(b_2 b_1 , f_r ) \ar@{=}[dr]_{P_1} &  & (b_1 , f_{r-1} ) \ar@{-}[dl]^{P_2}  \\
& (b_2 b_1 , f_{r-1} ) & \\
 }
$$
\begin{center}
\textbf{Figure 3. Excerpt of weak order in case $(B_r ,[3,1^{2r-2}])$}
\end{center}
A simple (resp. double) edge indicates that the covering relation is of type $U$ (resp. of type $N$). By Lemma \ref{lmmSphericalZariski} the orbit closure $Z= \overline{B \cdot b_2 b_1 \cdot f_r}$ is non-normal along $Y=\overline{B \cdot f_{r-1}}$. As subsets of $\mathfrak{so}_{n}$ one may compute
$$Z = \left\{ \left(
\begin{array}{ccc}
0& -y^T J_3&0\\
0&   C      &y\\
0&   0      &0   
\end{array} \right) \mid C \in \cN (\mathfrak{so}_3 ), C^2 y = 0 \right\}, \quad Y = \left\{ (C,y) \in Z \mid C=0 \right\}.$$
Variety $Z$ is $4$-dimensional. The reduced ideal for $Z$ is generated by the polynomials
$X_1^2 +2 X_2 X_3$ and  $X_1 Y_2 -X_2 Y_3 - X_3 Y_1$. Hence $Z$ is a complete intersection which is singular along the $3$-dimensional affine space $Y= V(X_1,X_2 ,X_3) \subseteq Z$. 
\end{example}

\begin{remark} The variety $\overline{\mathcal{O}(c_1 c_2 ,1)}$ from Example \ref{exampleNon-normal/Non-CM} is isomorphic to
$Z$. 
\end{remark}

\section{The cases $G_2$ and $F_4$}

\subsection{The group $G=G_2$} Let $\mathfrak{h} \subseteq \g = \mathfrak{g}_2$ be a Cartan subalgebra and $\Delta = \{ \alpha_1 , \alpha_2 \} \subseteq \mathfrak{h}^*$ be a set of simple roots with $\alpha_1$ short.
A system of positive roots for
$\g =\mathfrak{g}_2$ is then $\Phi^+ = \{\alpha_1 , \alpha_2 , \alpha_1 +\alpha_2 , 2 \alpha_1 + \alpha_2 , 3 \alpha_1 + \alpha_2 ,\beta:= 3 \alpha_1 + 2 \alpha_2 \}$.
For a root $\rho \in \Phi$ pick a generator $X_\rho$ of the vector space $\g_\rho$. There is one nilpotent $G_2$-orbit of height $2$. It is the minimal nilpotent orbit $\mathcal{O}_{\mathrm{min}} =\mathrm{G} \cdot X_\beta$, which is $6$-dimensional.

\medskip

\noindent We can use Lemma \ref{lemmaMinOrbit}. At first, any $B$-orbit closure of $\overline{\mathcal{O}_{\mathrm{min}}}$ admits a rational resolution of singularities. Further, there are $\mid \Phi_{\mathrm{lg}} \mid = 6$ many Borel orbits in $\mathcal{O}_{\mathrm{min}}$. The minimal Borel orbit is $B \cdot X_\beta = \g_\beta - \{0\}$.
Representatives of $B$-orbits  and the (linear) closure graph are depicted in the following line.
$$\xymatrix{
X_\beta &\ar@{-}[l]_{P_{\alpha_2}} X_{3 \alpha_1 + \alpha_2} &\ar@{-}[l]_{P_{\alpha_1}} X_{\alpha_2} &\ar@{-}[l]_{P_{\alpha_2}} X_{-\alpha_2}&\ar@{-}[l]_{P_{\alpha_1}} X_{-3 \alpha_1 - \alpha_2} &\ar@{-}[l]_{P_{\alpha_2}} X_{-\beta} .\\
}$$

\begin{remark} There is one nilpotent $G_2$-orbit of height $3$ (hence spherical). It is the $8$-dimensional orbit $\mathcal{O}_8 = G_2 \cdot X_{2 \alpha_1 + \alpha_2}$. According to \cite{levsmi} the $G_2$-closure $\overline{\mathcal{O}_8}$ is not normal. Two further Borel orbit closures of $\overline{\mathcal{O}_8}$ are investigated, namely the irreducible components of the scheme $\overline{\mathcal{O}_8} \cap \mathfrak{b}$ which are 
$\overline{P_{\alpha_2} \cdot X_{\alpha_1}}$ and $\overline{P_{\alpha_1} \cdot X_{\alpha_1 + \alpha_2}}$. It is shown in \textit{loc.cit.} that the latter is not normal.
\end{remark}

\subsection{The group $G= F_4$} Let $\mathfrak{h} \subseteq \g := \mathfrak{f}_4$ be a Cartan subalgebra and $\Delta = \{ \alpha_1 , \alpha_2 , \alpha_3 , \alpha_4 \} \subseteq \mathfrak{h}^*$ be a set of simple roots with $\alpha_1 , \alpha_2$ short. 
The highest root is $\beta = 2 \alpha_1 + 4 \alpha_2 + 3 \alpha_3 + 2 \alpha_4$. There are two nilpotent $G$-orbits of height $2$. The minimal orbit
$\mathcal{O}_\mathrm{min} = G \cdot X_\beta$ and $\mathcal{O}_2 = G \cdot X_{\beta -\alpha_2 -  \alpha_3 -\alpha_4}$.
In the following, we use the notation $(abcd)$ for the root $a \alpha_1 + b \alpha_2 + c \alpha_3 + d \alpha_4$.

\subsubsection{$\mathcal{O}_\mathrm{min}$} We use Lemma \ref{lemmaMinOrbit}. Any $B$-orbit closure of $\overline{\mathcal{O}_\mathrm{min}}$ has a rational resolution of singularities.
There are $24$ long roots:
$$\pm\{\beta ,(2431),(2421),(2221),(2211),(0221),(2210),(0211),(0210),(0011),\alpha_3 , \alpha_4 \}.$$
The corresponding root vectors $X_{\rho}$ represent $B$-orbits of $\mathcal{O}_{\mathrm{min}}$.
The closure graph is the reversed Hasse diagram of $\Phi_{\mathrm{lg}}$.

\subsubsection{$\mathcal{O}_2$} The Dynkin characteristic is $\Omega(\mathcal{O}_2 )=(\alpha_1 (H), \alpha_2 (H), \alpha_3 (H), \alpha_4 (H))=(1,0,0,0)$. 
For the $\ad_H$-eigenspace decomposition one has
$$\g(0)= \langle \g_\rho \mid \rho = f_2 \alpha_2 + f_3 \alpha_3 + f_4 \alpha_4 \rangle \oplus \mathfrak{h} = \mathfrak{so}_7 \oplus \kk ,$$
$$\g(2) = \langle g_{\rho} \mid \rho \succ 2 \alpha_1, \quad \rho \in \Phi^+ \rangle \simeq \kk^7 .$$

We have an equivariant resolution $G \times^{P^{\alpha_1}} \g(2) \to \overline{\mathcal{O}_2}$. 
Let us apply the results from paragraph \ref{3Nilp}. In particular, we saw that $\g(2)$ is not of minimal rank.
Denote by $L = \mathrm{Spin}(7) \times \G_m$ the Levi factor of $P^{\alpha_1}$. There are four $B_L$-orbits in the dense $L$-orbit $L \cdot X_{(2321)}$ of $\g (2)$. They are of dimension $4,5,6$ and $7$. The minimal $B_L$-orbit is 
$$B_L \cdot X_{(2321)} = \kk^* X_{(2321)} + \sum_{\rho \succ (2321)} \g_{\rho} \simeq \kk^* \times \kk^3 .$$
The remaining $B_L$-orbits of $\g(2)$ are of rank $2$. The weak order on $B_L \setminus \g(2)$ is depicted in the left column of the following figure. 
$$
\xymatrix{
X_{(2321)}\ar@{=}[d]_{P_{\alpha_2}} \ar@{-}[dr]^{P_{\alpha_1}} &   \\
X_{(2421)} + X_{(2221)} \ar@{-}[d]_{P_{\alpha_3}}& X_{(1321)} \ar@{-}[d]_{P_{\alpha_2}} \\
X_{(2431)} + X_{(2211)}  \ar@{-}[d]_{P_{\alpha_4}} & X_{(1221)}\\
X_{\beta} + X_{(2210)} & 
}$$

\begin{center}
\textbf{Figure 4. Excerpt of weak order in case $(F_4, \mathcal{O}_2 )$ } 
\end{center}

A double edge (resp. single edge) indicates that the covering relation is of Type $N$ (resp. Type $U$).
From Lemma \ref{lmmSphericalZariski} we derive that the closure of the orbit $B \cdot X_{(1221)}$ is not normal
along $B \cdot (X_{(2421)} + X_{(2221)})$.


\begin{thebibliography}{99}
\bibitem{piotr} Achinger, P., Perrin, N., \textit{Spherical multiple flags}. Preprint arXiv:1307.7236. To appear in Advanced Studies in Pure Mathematics.
\bibitem{Boos-Reineke} Boos, M., Reineke, M., 
\textit{$B$-orbits of $2$-nilpotent matrices and generalizations}. 
Highlights in Lie algebraic methods, Progr. Math., 295, Birkh\"auser/Springer, New York, 2012, 147-166.
\bibitem{bourbaki} Bourbaki, N. {\it Groupes et alg{\`e}bres de Lie, Chapitres IV, V \& VI}. Hermann 1968.
\bibitem{brion} Brion, M., \textit{Quelques propri{\'e}t{\'e}s des espaces homog{\`e}nes sph{\'e}riques}. Manuscripta Math. {\bf 55} (1986), no. 2, 191--198.
\bibitem{brion1} \bysame, 
\textit{On orbit closures of spherical subgroups in flag varieties}. 
Comment. Math. Helv. {\bf 76} (2001), no. 2, 263--299.  
\bibitem{brion2} \bysame, 
\textit{Multiplicity-free subvarieties of flag varieties}. 
Contemp. Math., {\bf 331} (2003), 13--23. 
\bibitem{BK} Brion, M., Kumar, S., 
\textit{Frobenius splitting methods in geometry and representation theory}. 
Progress in Mathematics, 231. Birkh\"auser Boston, Inc., Boston, MA, 2005. 
\bibitem{cmp1} P.-E. Chaput, L.~Manivel, and N.~Perrin, 
\emph{Quantum cohomology of minuscule homogeneous spaces},
Transform. Groups \textbf{13} (2008), no.~1, 47--89. 
\bibitem{cmp2} \bysame, 
\emph{Quantum cohomology of minuscule homogeneous spaces. II. Hidden
  symmetries.} IMRN 2007, no. \textbf{22}. 
\bibitem{cmp3} \bysame, 
\emph{Quantum cohomology of minuscule homogeneous spaces III :
  semi-simplicity and consequences}, Canad. J. Math. \textbf{62}
(2010), no. 6, 1246--1263. 
\bibitem{CP} P.-E. Chaput and N.~Perrin, 
\emph{On the quantum cohomology of adjoint varieties}. 
Proc. Lond. Math. Soc. (3) \textbf{103} (2011), no. 2, 294--330.
\bibitem{DFZJ} Di Francesco, P., Zinn-Justin, P.,
\textit{From orbital varieties to alternating sign matrices}.
arXiv:0512047.
\bibitem{donkin} Donkin, S., 
\textit{The normality of closures of conjugacy classes of matrices}. 
Invent. math. {\bf 101} (1990), no. 3, 717--736.
\bibitem{FR} Fowler, R., R\"ohrle, G.,
\textit{Spherical nilpotent orbits in positive characteristic}. Pacific J. Math. {\bf 237} (2008), no. 2, 241--286.
\bibitem{fulton} Fulton, W.,
\textit{Schubert varieties, degeneracy loci and determinantal varieties}.
Duke Math. J. {\bf 65} (1992), no. 3, 381--420.
\bibitem{HT} He, X., Thomsen, J.F., 
\textit{Geometry of $B\times B$-orbit closures in equivariant embeddings}.  
Adv. Math. {\bf 216} (2007),  no. 2, 626--646.
\bibitem{hesselink} Hesselink, W., 
\textit{The normality of closures of orbits in a Lie algebra}.
Commentarii Mathematici Helvetici {\bf 54 } (1979), no. 1, 105--110.
\bibitem{juteau} Juteau, D.,
\textit{Cohomology of the minimal nilpotent orbit}.
Transform. Groups. {\bf 13} (2014), no.2, 355--387.
\bibitem{kac} Kac, V., 
\textit{Some remarks on nilpotent orbits}. 
J. Alg. 64 (1980), 190--213.
\bibitem{KLS} Knutson, A., Lam, T., Speyer, D., \textit{Projections of Richardson varieties}. J. Reine Angew. Math. {\bf 687} (2014), 133--157.
\bibitem{levsmi} Levasseur, T., Smith, S. P.,
\textit{Primitive ideals and nilpotent orbits in Type $G_2$}.
J. Alg. 114 (1988), 81--105.
\bibitem{Melnikov} Melnikov, A., 
\textit{Description of $B$-orbit closures of order $2$ in upper triangular matrices}. 
Transf. Groups {\bf 11} (2006), no.2, 217--247.
\bibitem{mvdk} Mehta, V., van der Kallen, W.,  
\textit{A simultaneous Frobenius splitting for closures of conjugacy classes of nilpotent matrices}.
Compositio Math. {\bf 84} (1992), no. 2, 211--221.
\bibitem{Panyushev} Panyushev, D. I., 
\textit{Complexity and nilpotent orbits}. 
Manuscripta Mathematica, {\bf 83 }, (1994), no. 3-4, 223--237. 
\bibitem{survey} Perrin, N., 
\textit{Geometry of spherical varieties}. 
Transform. Groups. {\bf 19} (2014), no.1, 171--223.
\bibitem{embeddings} Perrin, N., 
\textit{Compatibly split subvarieties of group embeddings}. 
 Trans. Amer. Math. Soc. \textbf{367} (2015), no. 12, 8421--8438.
\bibitem{PinPhD} Pin, S.,
\textit{Adh\'{e}rences d'orbites de sous-groupes de Borel dans les espaces symm\'{e}trique}.
PhD Thesis Universit\'{e} Joseph Fourier Grenoble, 2001.
\bibitem{ressayre} Ressayre, N., 
\textit{Spherical homogeneous spaces of minimal rank}. 
Adv. Math. {\bf 224} (2010), no. 5, 1784--1800. 
\bibitem{RS1} Richardson, R.W., Springer, T.A., 
\textit{The Bruhat order on symmetric varieties}. 
Geom. Dedicata {\bf 35} (1990), no. 1-3, 389--436.
\bibitem{Rothbach} Rothbach, B.D., 
\textit{Borel Orbits of $X^2 = 0$ in $\mathfrak{gl}_n$}. 
PhD Thesis, Berkeley 2009.
\bibitem{SpringerSteinberg} Springer, T. A., Steinberg, R. \textit{Conjugacy classes}. 1970 Seminar on Algebraic Groups and Related Finite Groups (The Institute for Advanced Study, Princeton, N.J., 1968/69) 167--266. 
\bibitem{vinberg} Vinberg, {\`E}. \textit{Complexity of actions of reductive groups}. Funktsional. Anal. i Prilozhen. {\bf 20} (1986), no. 1, 1--13, 96.
\bibitem{Wyser} Wyser, B. J.,
\textit{Symmetric subgroup orbit closures on flag varieties: Their equivariant geometry, combinatorics, and connections with degeneracy loci}.
PhD Thesis, University of Georgia, 2012.
\bibitem{xiaoshu} Xiao, H., Shu, B., 
\textit{Normality of orthogonal and symplectic nilpotent orbit closures in positive characteristic}.
Journal of Algebra, {\bf 443}, (2015), 33-48.
\end{thebibliography}
\end{document}